\def\C{\mathbb{C}}
\def\N{\mathbb{N}}
\def\R{\mathbb{R}}
 \newtheorem{thm}{Theorem}[section]
 \newtheorem{cor}[thm]{Corollary}
 \newtheorem{lem}[thm]{Lemma}
 \newtheorem{prop}[thm]{Proposition}
\newcommand{\be}{\begin{equation}}
\newcommand{\ee}{\end{equation}}
\newcommand{\bea}{\begin{eqnarray}}
\newcommand{\eea}{\end{eqnarray}}
\newcommand{\Bea}{\begin{eqnarray*}}
\newcommand{\Eea}{\end{eqnarray*}}
\newcounter{cnt1}
\newcounter{cnt2}
\newcounter{cnt3}
\newcommand{\blr}{\begin{list}{$($\roman{cnt1}$)$}
 {\usecounter{cnt1} \setlength{\topsep}{0pt}
 \setlength{\itemsep}{0pt}}}
\newcommand{\bla}{\begin{list}{$($\alph{cnt2}$)$}
 {\usecounter{cnt2} \setlength{\topsep}{0pt}
 \setlength{\itemsep}{0pt}}}
\newcommand{\bln}{\begin{list}{$($\arabic{cnt3}$)$}
 {\usecounter{cnt3} \setlength{\topsep}{0pt}
 \setlength{\itemsep}{0pt}}}
\newcommand{\el}{\end{list}}
\title[Hermite pseudo-multipliers]{On Hermite pseudo-multipliers}
\author{Sayan Bagchi and Sundaram Thangavelu}
\date{}
\begin{document}


\address{department of mathematics,
Indian Institute of Science, Bangalore - 560 012, India}
\email{sayan@math.iisc.ernet.in} \email{veluma@math.iisc.ernet.in}


\keywords{ Multipliers and pseudo-multipliers, Hermite expansions, non-commutative derivatives, singular integrals, maximal functions}
\subjclass[2010] {Primary:  43A80, 42B25. Secondary:
42B20, 42B35, 33C45.}


\begin{abstract}
In this article we deal with a variation of a theorem of Mauceri concerning the $ L^p $ boundedness of operators $ M $ which are known to be bounded on $ L^2.$ We obtain sufficient conditions on the kernel of the operaor $ M $ so that it satisfies weighted $ L^p $ estimates. As an application we prove $ L^p $ boundedness of Hermite pseudo-multipliers.

\end{abstract}






\maketitle

\section[Introduction]{introduction}
In order to motivate the definition of Hermite pseudo-multipliers, let us briefly recall the  definition of pseudo-differential operators.
Using Fourier transform, a differential operator $ p(x,D) = \sum_{|\alpha| \leq m} a_\alpha(x) \partial^\alpha $ can be represented as
$$ p(x,D)f(x) = (2\pi)^{-n/2} \int_{\R^n} e^{i x\cdot \xi}p(x,\xi) \hat{f}(\xi) d\xi  $$
where $ p(x,\xi)  = \sum_{|\alpha| \leq m} a_\alpha(x) \xi^\alpha $  and $ \hat{f} $ is the Fourier transform of $ f $ defined by
$$ \hat{f}(\xi) = (2\pi)^{-n/2} \int_{\R^n} f(x) e^{-i x\cdot \xi} dx.$$ When we try to find a fundamental solution of $p(x,D) $ which amounts to inverting the operator we end up with more general operators of the form
$$ a(x,D)f(x) = (2\pi)^{-n/2} \int_{\R^n} e^{i x\cdot \xi}a(x,\xi) \hat{f}(\xi) d\xi  $$ where $ a(x,\xi) $ is a  general function on the phase space $ \R^n \times \R^n  $ not necessarily a polynomial in the $ \xi $ variable. Such operators are called pseudo-differential operators and they play an important role in the theory of
partial differential operators.  The function $ a(x,\xi) $ is termed as the symbol of the pseudo-differential operator $ a(x,D).$ When the symbol $ a(x,\xi) $ is independent of the $ x $ variable, say  $ a(x,\xi) = m(\xi) $  the resulting operator
$$  m(D) = (2\pi)^{-n/2} \int_{\R^n} e^{i x\cdot \xi} m(\xi)  \hat{f}(\xi) d\xi  $$  is nothing new to a harmonic analyst which is known by the name of Fourier multiplier. While Fourier multipliers and pseudo-differential operators are associated to the Fourier transform, Hermite pseudo-multipliers are associated to Hermite expansions to which we now turn.\\

Consider  the Hermite operator $H=-\Delta+ |x|^2$ on $\R^n$ where $ \Delta $ stands for the standard Laplacian. The spectral resolution of $ H $ is given by
$$H=\sum_{k=0}^\infty (2k+n) P_k$$
where for each $ k =0,1,2,... , P_k$ is the orthogonal projection of $ L^2(\R^n)$ onto
the $k$-th eigenspace of $H$ corresponding to the eigenvalue
$(2k+n)$. Given a bounded function $m$ defined on $\N$, the set of
all natural numbers, we can define an operator $m(H)$ by the
prescription
$$m(H)=\sum_{k=0}^\infty m(2k+n) P_k.$$
By the Plancherel theorem (or Parseval's identity) for Hermite expansions, it is immediate that $ m(H) $ is bounded on $ L^2(\R^n) $ if and only if  $ m $ is a
bounded function on $ \N.$  On the other hand, the boundedness of $m$ alone is not enough for $m(H)$ to extend to
$L^p(\R^n), p\neq 2$ as a bounded operator. In \cite{T} the second
author has studied this problem and obtained a sufficient condition on $m$ so that $m(H)$
extends to $L^p(\R^n)$ as a bounded operator for all $
1<p<\infty.$ See also the works \cite{GHSTV} and \cite{HTV} for weighted norm inequalities for Hermite multipliers.\\

Suppose now that we have a bounded function $m$ defined on $\R^n\times \N$.
We can define an operator $m(x,H)$ formally by setting
$$m(x,H)=\sum_{k=0}^\infty m(x, 2k+n)P_k.$$
Note that $m(x,H)$ is densely defined. It would be interesting to
see if $m(x,H)$ can be extended to $L^p(\R^n)$ as a bounded
operator. Even when $p=2$ it is not clear if $m(x,H)$ will
automatically be bounded on $L^2(\R^n)$, which is the case when $m$
is independent of the $x$-variable. In analogy with
the pseudo-differential operators $ a(x, D),$ we call such operators $ m(x,H) $ Hermite pseudo-multipliers.
The boundedness of Fourier multipliers and pseudo-differential
operators have been well studied in the literature. The celebrated
theorem of H\"{o}rmander-Mihlin deals with Fourier multipliers
whereas the theorem of Calderon-Vaillancourt deals with
pseudo-differential operators, see Theorem 2.80 in Folland \cite{F} and the references there.\\

The Hermite pseudo-multipliers also occur as Weyl transforms of radial functions on $ \C^n.$ Indeed, let $ \pi(w), w\in \C^n $ stand for the
projective representation of $ \C^n $ related to the Schrodinger representation $ \pi_1 $ of the Heisenberg group. To be more precise, the action of $ \pi(w) $ on   a function $ \varphi \in L^2(\R^n) $ is given  by
$$ \pi(u+iv) \varphi(\xi) = e^{i(u\cdot \xi +\frac{1}{2} u\cdot v)}\varphi(\xi+v).$$ It is well known that $ \pi(w) $ is a unitary operator on $ L^2(\R^n).$ If $a(x,w) $ is a function on $ \R^n \times \C^n $ which is radial in the second variable then the operator
$$  Tf(x) =  \int_{\C^n} a(x,w)\pi(w)f(x) dw $$ is a Hermite pseudo-multiplier. To see this, let us expand the function $ a(x, w) $  in terms of the Laguerre functions
$$   \varphi_k(w) = L_k^{n-1}(\frac{1}{2}|w|^2) e^{-\frac{1}{4}|w|^2}. $$  By letting  $ m(x,k) $  to stand for  the Laguerre coefficients defined by
$$ m(x,k) = \frac{k!(n-1)!}{(k+n-1)!} \int_{\C^n} a(x,w) \varphi_k(w) dw $$
we get the expansion
$$  a(x,w) = \sum_{k=0}^\infty m(x,k) \varphi_k(w).$$ The Hermite projections $ P_k $ and the Laguerre functions $ \varphi_k $ are intimately connected via the following well known formula  (see  Section 1.3 in \cite{T})
$$ \int_{\C^n} \varphi_k(w) \pi(w) dw = (2\pi)^{-n} P_k .$$  Thus we see that
$ T = \sum_{k=0}^\infty m(x,k) P_k$ is  a Hermite pseudo-multiplier.\\

Though there are quite a few papers dealing with Hermite
multipliers, there is only one paper as  far as we are aware of,
dealing with Hermite pseudo-multipliers. In \cite{E} Epperson
studided the $L^p$-boundedness of $m(x, H)$ in one dimension and proved the following result.
In order to state his result, let us set up some notation.
We define $\Delta m(x, k)=m(x, k+1)-m(x, k)$ and for $j\geq 2,
\Delta^j m(x,k)=\Delta(\Delta^{j-1}m(x, k)).$

\begin{thm} (Epperson)
Assume that the Hermite pseudo-multiplier $m(x, H)$ is bounded on
$L^2(\R)$. Suppose $\sup_{x\in\R^n}|\Delta^j m(x, k)|\leq
C_j (2k+1)^{-j}$, for $j=0, 1, 2,\ldots, 5.$ Then $ m(x,H) $ is of weak type (1,1) and consequently, bounded on $ L^p(\R) $ for $ 1 < p < 2.$
\end{thm}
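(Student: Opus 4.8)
The plan is to realize $m(x,H)$ as an integral operator and then run a Calder\'on--Zygmund argument with the $L^2$ boundedness as the point of departure. Writing $\Phi_k$ for the normalized Hermite functions on $\R$ (each eigenspace of $H$ being one dimensional, so that $P_kf=\langle f,\Phi_k\rangle\Phi_k$), the operator has the kernel
$$ K(x,y)=\sum_{k=0}^\infty m(x,2k+1)\,\Phi_k(x)\,\Phi_k(y). $$
Since $m(x,H)$ is assumed bounded on $L^2(\R)$, by the Calder\'on--Zygmund theory for non-convolution singular integrals it suffices to verify the H\"ormander-type integral condition
$$ \sup_{y_0,\,y}\ \int_{|x-y_0|\ge 2|y-y_0|}|K(x,y)-K(x,y_0)|\,dx\le C. $$
Once this holds, $m(x,H)$ is of weak type $(1,1)$, and Marcinkiewicz interpolation with the $L^2$ bound yields boundedness on $L^p(\R)$ for $1<p<2$, which is exactly the asserted conclusion.

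To exploit the hypotheses, I would first transfer the finite differences $\Delta^j m$ onto the Hermite kernel by repeated Abel summation. Setting $S_k(x,y)=\Phi_k(x)\Phi_k(y)$ and letting $S_k^{(j)}$ denote its $j$-fold iterated partial sums in the index $k$, summing by parts five times rewrites $K(x,y)$ as a finite combination of terms of the form $\sum_k \Delta^j m(x,2k+1)\,S_k^{(j)}(x,y)$ together with boundary contributions. The hypotheses $\sup_x|\Delta^j m(x,k)|\le C_j(2k+1)^{-j}$ for $j=0,1,\dots,5$ then supply precisely the decay $(2k+1)^{-j}$ needed to offset the growth of the iterated sums $S_k^{(j)}$, reducing everything to pointwise estimates on the purely spectral building blocks.

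The analytic heart of the matter is controlling these Hermite building blocks. I would use Mehler's formula for the oscillator semigroup $e^{-tH}$ together with the uniform Plancherel--Rotach asymptotics for $\Phi_k$, splitting the range of the variables according to their position relative to the turning points $\pm\sqrt{2k+1}$ and splitting the domain of the $x$-integral according to whether $|x|$ is comparable to or much larger than that turning point. From these one aims to extract a size estimate of the shape $|K(x,y)|\lesssim|x-y|^{-1}$ (with additional Gaussian decay once $x,y$ are large) and, after forming the $y$-difference, a regularity estimate bounding $|K(x,y)-K(x,y_0)|$ by $|y-y_0|\,|x-y_0|^{-2}$ away from the diagonal; integrating the latter over $|x-y_0|\ge 2|y-y_0|$ produces the required uniform constant.

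The hard part will be the regularity (difference) estimate, and within it the behavior near the turning points, where the Hermite functions undergo their Airy-type transition and the oscillatory sums $S_k^{(j)}$ are most delicate; this is exactly where all five finite differences are consumed in order to gain enough decay for the $x$-integral to converge uniformly in $y_0$. Controlling the non-local contributions, with $x$ and $y$ lying on opposite sides of the classically allowed region, requires the exponential off-diagonal decay furnished by the semigroup kernel, and assembling the local and non-local pieces into the single uniform H\"ormander bound is the technical crux of the argument.
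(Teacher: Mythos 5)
First, note that the paper does not actually prove this statement: it is Epperson's theorem, quoted from \cite{E}, and the paper only establishes the higher-dimensional analogue (Theorem 1.2) by a different mechanism. Judged on its own terms, your proposal is a plan rather than a proof, and the plan has a concrete gap at its analytic core. After five summations by parts you are left with sums $\sum_k \Delta^j m(x,2k+1)\, S_k^{(j)}(x,y)$, where $S_k^{(j)}$ is a $j$-fold iterated partial sum of $\Phi_l(x)\Phi_l(y)$; such iterated sums grow like $k^j$ in general, so the hypothesis $|\Delta^j m(x,k)|\le C_j(2k+1)^{-j}$ puts you exactly at the critical exponent and the resulting series is not absolutely convergent. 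Everything therefore hinges on genuine oscillation estimates for these Ces\`aro-type kernels, which you defer to ``Plancherel--Rotach asymptotics near the turning points'' without carrying them out; that deferred step is the entire content of the theorem. Moreover, your target regularity estimate $|K(x,y)-K(x,y_0)|\lesssim |y-y_0|\,|x-y_0|^{-2}$ --- full Lipschitz smoothness of the global kernel --- is stronger than what finitely many finite differences are known to deliver. With hypotheses of this type one only gains H\"older-$\frac12$ regularity, which is precisely why the paper (Section 4) proves the weaker bound $|x-z|^{n+\frac12}|K_N(x,y)-K_N(x,z)|\le C|y-z|^{1/2}$ and invokes the corresponding weak type $(1,1)$ criterion (Theorem 5.10 of \cite{D}) rather than the classical Lipschitz H\"ormander condition.

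The structural ingredient you are missing is a scale decomposition. The paper never estimates the full kernel $K$ directly: it writes $I=\sum_j S_j$ with $S_j=e^{-t_{j+1}H}-e^{-t_jH}$, $t_j=2^{-j}$, sets $M_j=MS_j$, proves for each piece a size bound $|x-y|^l|M_j(x,y)|\le C\,t_{j+1}^{(l-n)/2}$ together with a gradient bound, interpolates the two via $\min\{t_{j+1}^{1/4}|y-z|^{-1/2},\,|y-z|^{1/2}t_{j+1}^{-1/4}\}$, and sums over $j$. The weights $|x-y|^l$ are produced not by asymptotics of individual Hermite functions but by commutators with $A_i$ and $A_i^*$ (Lemma 5.2 converts $(x-y)^\alpha M(x,y)$ into finite differences of $m$ applied to $(B^*-A^*)^\beta\Phi_k$), after which only the crude bound $\sum_{|\mu|=k}\Phi_\mu(x)^2\le C(2k+n)^{\frac{n}{2}-1}$ is needed --- no Mehler formula and no Airy-region analysis at all. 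To complete your argument you would need either to import this dyadic/commutator machinery or to supply the pointwise kernel estimates from Epperson's original paper; as written, the passage from the finite-difference hypotheses to the H\"ormander condition is asserted, not established.
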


Note that  in the above theorem  $m(x, H)$ is already assumed to be bounded on $L^2(\R)$.
Actually, the problem of finding a satisfactory condition on $m(x,
k)$ under which $m(x, H)$ will be bounded on $L^2(\R^n)$ is still
open (see Corollary 1.4 below for a simple minded condition).
Under some decay assumptions on the finite differences of $m(x,
k)$ (in the second variable) Epperson  managed to show that
$m(x, H)$ is bounded on $L^p(\R)$, $1< p< 2$. One of our
main results in this paper is the following result which is an analogue of Epperson's theorem when $ n \geq 2.$

\begin{thm}
Let $ n \geq 2.$ Assume that the Hermite pseudo-multiplier $m(x, H)$ is bounded on
$L^2(\R^n)$. Suppose $$\sup_{x\in\R^n}|\Delta^j m(x, k)|\leq
C_j (2k+n)^{-j},$$ for $j=0, 1, 2,\ldots,  n+1.$
Then $ m(x,H) $ is of weak type (1,1) and consequently bounded on $ L^p(\R^n)$ for $ 1 < p < 2.$

\end{thm}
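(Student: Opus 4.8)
The plan is to prove this as a standard application of Calderón–Zygmund theory, adapted to the Hermite setting. Because $m(x,H)$ is already assumed bounded on $L^2(\R^n)$, the weak-type $(1,1)$ estimate will follow from the classical singular-integral machinery once I establish a suitable H\"ormander-type integral condition on the Schwartz kernel $K(x,y)$ of the operator. The $L^p$ boundedness for $1<p<2$ then comes for free by Marcinkiewicz interpolation between the weak $(1,1)$ bound and the $L^2$ bound. The entire weight of the argument therefore falls on kernel estimates, which is presumably why the paper's abstract advertises ``sufficient conditions on the kernel.'' I expect the authors to have isolated a general theorem (the promised variation of Mauceri's theorem) reducing weak-type and weighted $L^p$ bounds to pointwise/integral decay of $K$; my job here is to verify that the finite-difference hypotheses $\sup_x |\Delta^j m(x,k)| \le C_j (2k+n)^{-j}$ for $0 \le j \le n+1$ translate into exactly such kernel decay.

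First I would write the kernel explicitly. Using the Mehler-type kernel for the Hermite projections, the kernel of $m(x,H)$ is
\[
K(x,y) = \sum_{k=0}^\infty m(x,2k+n)\,\Phi_k(x,y),
\]
where $\Phi_k(x,y)$ is the kernel of $P_k$, built from products of Hermite functions. The key analytic tool is summation by parts: since $m$ is being multiplied against $\Phi_k$ in the $k$-variable, repeated Abel summation transfers differences off of the spectral kernel and onto $m$, producing a sum involving $\Delta^j m(x,2k+n)$ paired against the iterated partial sums $\sum_{\ell \ge k} \binom{\ell-k+j-1}{j-1}\Phi_\ell$. The hypotheses give $|\Delta^j m(x,2k+n)| \lesssim (2k+n)^{-j}$, and the number of derivatives that one can usefully absorb is governed by the dimension, which is exactly why the threshold is $j \le n+1$: one needs enough differences to beat the growth in $k$ of the relevant Hermite kernel sums and thereby obtain integrable off-diagonal decay in $|x-y|$. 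The main technical input will be sharp pointwise and $L^1$-type bounds on these partial sums of spectral kernels, for which the standard references are the estimates for the Hermite semigroup and Mehler's formula in Thangavelu's monograph.

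Concretely, I would carry out the following steps in order. \emph{Step 1:} Reduce, via the general kernel theorem assumed earlier in the paper, the claim to verifying a H\"ormander-type regularity condition, namely $\int_{|x-y|>2|y-y'|} |K(x,y)-K(x,y')|\,dx \le C$ uniformly in $y,y'$ (and the companion estimate in the first variable if the operator is not self-adjoint). \emph{Step 2:} Apply summation by parts $n+1$ times in the spectral index to rewrite $K(x,y)$ in terms of $\Delta^j m$ and iterated sums of $\Phi_k$. \emph{Step 3:} Insert the decay hypotheses and estimate each resulting term using Hermite kernel bounds, splitting into the local region (near the diagonal) and the region away from it, and summing the dyadic contributions in $k$. \emph{Step 4:} Combine the weak $(1,1)$ bound thus obtained with the hypothesized $L^2$ bound and interpolate to reach $L^p$ for $1<p<2$. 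The hardest step will be Step 3: controlling the gradient of the iterated Hermite-projection kernels and showing that after $n+1$ summations by parts the smoothness of $K$ in $y$ is exactly enough to yield the integrable difference estimate, uniformly in the $x$-dependence of the symbol. This is where the dimensional constraint $n \ge 2$ and the precise number $n+1$ of required differences must be reconciled with the available decay of the spectral kernels.
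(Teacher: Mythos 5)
Your overall architecture (Calder\'on--Zygmund theory: the assumed $L^2$ bound plus a H\"ormander condition in the second variable gives weak $(1,1)$, then Marcinkiewicz interpolation) is the same as the paper's, and your instinct that the differences $\Delta^j m$ must be traded against the spectral kernels is right. But the central mechanism in your Steps 2--3 has a genuine gap. Literal iterated Abel summation, pairing $\Delta^j m(x,2k+n)$ against the tails $\sum_{\ell\ge k}\binom{\ell-k+j-1}{j-1}\Phi_\ell(x,y)$, does not work: these iterated partial sums of projection kernels do not converge pointwise (already $\sum_{\ell\ge k}\Phi_\ell(x,y)$ is only a distribution, and even the finite partial sums grow like $k^{n/2}$ on the diagonal, so the boundary terms in the summation by parts do not vanish), and, more importantly, they carry no off-diagonal decay in $|x-y|$, so no amount of decay in $k$ of $\Delta^j m$ will by itself produce the integrable $|x-y|^{-(n+\epsilon)}$ behaviour you need. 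In the Hermite setting the off-diagonal decay comes from a different identity: $(x-y)^\alpha K(x,y)$ is the kernel of the iterated commutator $D^\alpha M$ with $D_jM=[x_j,M]$, and since $x_j=\tfrac12(A_j+A_j^*)$ this expands (the paper's Lemma 5.2, after Lemma 3.2.3 of Thangavelu) as $\sum C_{\beta,\gamma}\sum_k\Delta^{|\gamma|}m(x,2k+n)\,(B^*-A^*)^\beta\Phi_k(x,y)$ with $2\gamma_j-\beta_j=\alpha_j$. The finite differences arrive intertwined with creation and annihilation operators acting on $\Phi_k$, and it is orthogonality together with $\sum_{|\mu|=k}\Phi_\mu(x)^2\le C(2k+n)^{\frac n2-1}$ that yields the usable (weighted $L^2$, not $L^1$) kernel bounds.

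Even granting the correct identity, you cannot verify the H\"ormander condition ``directly on $K$'': the paper first decomposes $M=\sum_j MS_j$ with $S_j=e^{-t_{j+1}H}-e^{-t_jH}$, $t_j=2^{-j}$, together with a dyadic spectral cut-off $\chi_N$, proves for each piece a size estimate $\sup_{x,y}|x-y|^l|M_j(x,y)|\le Ct_{j+1}^{(l-n)/2}$ for $l\le n+1$ \emph{and} a gradient estimate for $\nabla_yM_j$ costing an extra $t_{j+1}^{-1/2}$, and then gets the regularity in $y$ by taking the minimum of the two resulting bounds and summing over $j$. Your plan never produces any $y$-regularity of the kernel: the hypotheses control differences of $m$ in $k$ only, and extracting smoothness in $y$ requires separately estimating $M_jA_i$ and $M_jA_i^*$ (the paper's Proposition 4.3), which is where the exponents $n+\tfrac12$ and $t_{j+1}^{\pm1/4}$ come from and where the count $n+1$ is actually consumed. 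Without the decomposition and the gradient estimates, Step 3 cannot be completed; Steps 1 and 4 are fine.
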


Note that in the above theorem we have boundedness only for $ 1 < p \leq 2.$ Since the adjoint of a Hermite pseudo-multiplier is not necessarily a
pseudo-multiplier we cannot use duality to treat the case $ p > 2.$ In fact, if
$$ m(x,H) = \int_{\C^n} a(x,|w|) \pi(w) dw $$ then an easy calculation shows that
$$ m(x,H)^* =  \int_{\C^n} \overline{a(x-v, |u+iv|)} \pi(u+iv) dudv $$
and hence it is clear that $ m(x,H)^* $ is not a pseudo-multiplier unless $ a $ is independent of $ x $ which means $ m(x,H) $ is a Hermite multilpier.

In order to prove the boundedness of $ m(x,H) $ on $ L^p(\R^n) $ for $ p > 2 $ we need to assume some extra conditions on $ m(x,k).$
Let $A_p=A_p(\R^n)$
stand for Muckenhoupt's $A_p$-class of weight functions. In what follows we always assume that $ n \geq 2.$

\begin{thm}
Assume that the Hermite pseudo-multiplier $m(x, H)$ is bounded on
$L^2(\R^n)$. Suppose $\sup_{x\in\R^n}|\Delta^j m(x, k)|\leq
C_j (2k+n)^{-j}$, for $j=0, 1, 2,\ldots,  [\frac{n}{2}]+1$ and
assume that the partial derivatives $\frac{\partial}{\partial
x_i}m(x, k)$ also satisfy similar estimates for $ j = 0,1,2, \cdots [\frac{n}{2}].$ Then for any $2< p<
\infty$ and $w\in A_{p/2}$ we have the weighted norm inequality
$$\int_{\R^n} |m(x, H)f(x)|^p w(x) dx\leq C~ \int_{\R^n} |f(x)|^pw(x)dx$$
for all $f\in L^p(\R^n, wdx)$.

\end{thm}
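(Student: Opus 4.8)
The plan is to dominate $m(x,H)f$ pointwise by the Hardy--Littlewood maximal function $M$ applied to $|f|^2$, and then to exploit the boundedness of $M$ on $L^{p/2}(w)$; this is exactly the mechanism responsible for the exponent $p/2$ and the class $A_{p/2}$ in the statement. Throughout write
\[ m(x,H)f(x)=\int_{\R^n}K(x,y)\,f(y)\,dy,\qquad K(x,y)=\sum_{k=0}^\infty m(x,2k+n)\,\Phi_k(x,y), \]
where $\Phi_k(x,y)$ denotes the kernel of the Hermite projection $P_k$. This route is essentially forced: as remarked after Theorem 1.2, the adjoint of a pseudo-multiplier need not be a pseudo-multiplier, so the range $p>2$ cannot be reached merely by dualizing the weak type $(1,1)$ result.

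The key estimate I would prove is the pointwise bound
\[ |m(x,H)f(x)|^2\le C\,M\big(|f|^2\big)(x)\qquad\text{for a.e. }x\in\R^n. \]
Granting this, $p>2$ gives $p/2>1$, so the hypothesis $w\in A_{p/2}$ makes $M$ bounded on $L^{p/2}(w)$, and therefore
\[ \int_{\R^n}|m(x,H)f|^p\,w\,dx=\int_{\R^n}\big(|m(x,H)f|^2\big)^{p/2}w\,dx\le C\int_{\R^n}\big(M(|f|^2)\big)^{p/2}w\,dx\le C\int_{\R^n}|f|^p\,w\,dx, \]
which is precisely the asserted inequality.

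To prove the pointwise bound I would split the $y$-integration into dyadic annuli $A_j=\{y:2^j\le|x-y|<2^{j+1}\}$ and apply the Cauchy--Schwarz inequality on each annulus. Since $\int_{A_j}|f|^2\le C\,2^{jn}M(|f|^2)(x)$, this yields
\[ |m(x,H)f(x)|\le C\,\big(M(|f|^2)(x)\big)^{1/2}\sum_{j}\Big(2^{jn}\!\int_{A_j}|K(x,y)|^2\,dy\Big)^{1/2}, \]
so the whole matter reduces to the off-diagonal $L^2$ kernel estimate $\sum_j\big(2^{jn}\int_{A_j}|K(x,y)|^2\,dy\big)^{1/2}\le C$ uniformly in $x$. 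I would obtain this by a Littlewood--Paley decomposition of the spectrum into blocks with $2k+n\sim 2^{2l}$ and summation by parts in $k$: the finite-difference hypotheses $|\Delta^j m(x,k)|\le C_j(2k+n)^{-j}$ for $j\le[\tfrac n2]+1$, combined with the known size and $L^2$ estimates for the Hermite spectral projection kernels, convert into enough decay of $K(x,y)$ in $|x-y|$ to make the annulus sum converge. The order $[\tfrac n2]+1$ here, as against the $n+1$ differences needed for the weak type $(1,1)$ bound of Theorem 1.2, reflects the usual Plancherel gain in passing from an $L^1$-type H\"ormander condition to an $L^2$ condition on annuli.

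The main obstacle I expect is uniformity in $x$, and this is exactly where the hypotheses on the derivatives $\frac{\partial}{\partial x_i}m(x,k)$ of order up to $[\tfrac n2]$ enter. Because $m(x,H)$ is neither translation invariant nor self-adjoint, the frozen symbol $m(x,\cdot)$ and the Hermite kernel $\Phi_k(x,y)$ interact in a genuinely $x$-dependent way, and one must control the variation of $K(x,y)$ in the $x$ variable --- as well as the change in the natural Hermite length scale when $|x|$ is large --- in order to keep the constant in the annulus estimate independent of $x$. Estimating this $x$-variation through the derivative hypotheses, together with the projection-kernel bounds, is the technical heart of the argument; once it is carried out, the reduction above delivers the weighted $L^p$ inequality for every $2<p<\infty$ and every $w\in A_{p/2}$.
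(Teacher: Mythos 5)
Your heuristic for why $A_{p/2}$ appears is on the right track (the exponent $p/2$ does come from controlling the operator by $(\Lambda(|f|^2))^{1/2}$), but the central claim of your argument, the pointwise bound $|m(x,H)f(x)|^2\le C\,\Lambda(|f|^2)(x)$, is false and cannot be repaired in that form. Such a bound would immediately give $\|m(x,H)f\|_\infty\le C\|f\|_\infty$, and singular-integral-type operators of this kind (already constant-coefficient Hermite multipliers $m(H)$ satisfying the same finite-difference conditions) are not bounded on $L^\infty$. It would also yield $L^p$ boundedness for all $p>2$ without ever invoking the hypothesis that $m(x,H)$ is bounded on $L^2(\R^n)$ --- a hypothesis the paper stresses is essential and nontrivial --- which is a sure sign something is off. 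Concretely, the failure is in your reduction to $\sum_j\bigl(2^{jn}\int_{A_j}|K(x,y)|^2dy\bigr)^{1/2}\le C$: the sum must run over all dyadic annuli down to the diagonal, and for a kernel with Calder\'on--Zygmund-type singularity $|K(x,y)|\sim|x-y|^{-n}$ each term $2^{jn}\int_{A_j}|K(x,y)|^2dy$ is of order $1$, so the sum over small annuli diverges. No size estimate on the kernel can control the near-diagonal contribution; that is precisely what the $L^2$ boundedness assumption is for.

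The paper's proof handles this by a local/global splitting rather than a global pointwise domination. One decomposes $M=\sum_j MS_j$ with $S_j=e^{-t_{j+1}H}-e^{-t_jH}$, proves weighted $L^2$ kernel estimates $\sup_x\int_{\R^n}|x-y|^{2l}|M_j(x,y)|^2dy\le C\,t_{j+1}^{l-n/2}$ for $l\le[\frac{n}{2}]+1$ (the finite differences in $k$ are transferred to non-commutative derivatives $\delta^\alpha\bar{\delta}^\beta$, or equivalently to commutators $[x_i,M]$, via Lemma 5.2 and Proposition 5.3), together with analogous estimates for $\nabla_xM_j(x,y)$ --- this is where the hypotheses on $\partial_{x_i}m(x,k)$ enter, not to secure uniformity in $x$ of size bounds but to obtain H\"ormander-type smoothness of the kernel in its first variable. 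From these one establishes the sharp maximal function estimate $\Lambda^\sharp(Mf)(x)\le C(\Lambda_2f(x)+\Lambda(\Lambda_2f)(x))$ with $\Lambda_2f=(\Lambda|f|^2)^{1/2}$: on a cube $Q\ni x$ the local piece $f\chi_{2Q}$ is controlled by the assumed $L^2$ boundedness of $M$ (this produces the $\Lambda_2f$ term and hence the $A_{p/2}$ class), and only the far piece is controlled by the kernel estimates. The weighted inequality then follows from the Fefferman--Stein type inequality relating $\Lambda_d$ and $\Lambda^\sharp$ for $A_p$ weights. You would need to restructure your argument along these lines; as written, the key step does not hold.
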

Note that as in the paper of Epperson we have started with a
pseudo-multiplier which is already bounded on $L^2(\R^n)$.
However, the number of finite differences involved is almost
optimal. Also note that the weight function $w$ is taken from
$A_{p/2}$, not from $A_p$ as one would expect. If we increase the
number of finite differences to $n+1$ instead of $[\frac{n}{2}]
+1$ then we can allow $A_p$ weights in the weighted norm
inequality.\\

\begin{thm}
Assume that the Hermite pseudo-multiplier $m(x, H)$ is bounded on
$L^2(\R^n)$. Suppose $\sup_{x\in\R^n}|\Delta^j m(x, k)|\leq
C_j (2k+n)^{-j}$, for $j=0, 1, 2,\ldots,  n+1$ and
assume that the partial derivatives $\frac{\partial}{\partial
x_i}m(x, k)$ also satisfy similar estimates for $ j = 0,1,2,\cdots n.$ Then for any $ 1< p<
\infty$ and  $w\in A_{p}$ we have
$$\int_{\R^n} |m(x, H)f(x)|^p w(x) dx\leq C~ \int_{\R^n} |f(x)|^pw(x)dx$$
for all $f\in L^p(\R^n, wdx)$.

\end{thm}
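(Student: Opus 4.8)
The plan is to deduce the weighted inequality from a Calder\'on--Zygmund argument, exactly as in the general kernel theorem underlying this paper: since $m(x,H)$ is assumed bounded on $L^2(\R^n)$, it suffices to show that its Schwartz kernel
$$ K(x,y) = \sum_{k=0}^\infty m(x,2k+n)\,\Phi_k(x,y), \qquad \Phi_k(x,y)=\sum_{|\alpha|=k}\Phi_\alpha(x)\Phi_\alpha(y), $$
is a standard Calder\'on--Zygmund kernel, i.e. that it obeys the size bound $|K(x,y)|\le C|x-y|^{-n}$ together with the two H\"ormander regularity estimates
$$ \int_{|x-y|>2|y-y'|} |K(x,y)-K(x,y')|\,dx \le C, \qquad \int_{|x-y|>2|x-x'|} |K(x,y)-K(x',y)|\,dy \le C. $$
Once both conditions are in force, the theorem of Coifman--Fefferman gives $m(x,H)$ bounded on $L^p(\R^n,w\,dx)$ for every $1<p<\infty$ and every $w\in A_p$, which is the desired conclusion. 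The first regularity estimate is precisely the one that yields Theorem 1.3 (and hence weak type $(1,1)$ and the range $1<p<2$), so the genuinely new task is the second, \emph{first-variable}, regularity estimate; this is where the hypotheses on $\partial m/\partial x_i$ must enter, and it is the reason we cannot simply invoke duality, the adjoint not being a pseudo-multiplier.

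To produce the pointwise and integral kernel bounds I would decompose the symbol dyadically in the spectral variable, writing $m(x,2k+n)=\sum_{j\ge 0} m_j(x,2k+n)$ with $m_j$ supported where $2k+n\sim 2^j$ and with $\Delta^\ell m_j$ inheriting the decay $\lesssim 2^{-j\ell}$ for $0\le \ell\le n+1$. The corresponding kernel pieces $K_j(x,y)=\sum_k m_j(x,2k+n)\Phi_k(x,y)$ are localized near the diagonal at scale $2^{-j/2}$; summation by parts against the $n+1$ finite differences of $m_j$, combined with the known estimates for the Hermite projection kernels (Mehler's formula and the Hermite function asymptotics), gives each $K_j$ a clean size and decay bound, and summing in $j$ reproduces $|K(x,y)|\le C|x-y|^{-n}$. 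The number $n+1$ of differences is exactly what is needed to make these \emph{pointwise} (rather than merely $L^2$-averaged) kernel estimates converge; this is the structural difference from Theorem 1.4, where only $[\frac{n}{2}]+1$ differences are available and one has to settle for $L^2$ off-diagonal bounds and the narrower conclusion of $A_{p/2}$ weights for $p>2$.

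For the two regularity estimates I would bring in the non-commutative derivatives $A_i=\frac{\partial}{\partial x_i}+x_i$ and $A_i^*=-\frac{\partial}{\partial x_i}+x_i$, which act as ladder operators on the spaces $P_kL^2(\R^n)$ and satisfy $\frac{\partial}{\partial x_i}=\tfrac12(A_i-A_i^*)$. Differentiating $K$ in the second variable $y$ only moves these ladder operators onto the $\Phi_k$ factor and rewrites the result as a pseudo-multiplier of the same type, so the first H\"ormander condition follows from the diagonal estimates above, as for Theorem 1.3. Differentiating in the first variable $x$ is the delicate point: by the product rule
$$ \frac{\partial}{\partial x_i} K(x,y) = \sum_k \Big(\frac{\partial}{\partial x_i}m\Big)(x,2k+n)\,\Phi_k(x,y) + \sum_k m(x,2k+n)\,\frac{\partial}{\partial x_i}\Phi_k(x,y), $$
so two objects appear. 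The second sum is controlled exactly like the $y$-derivative, using the $n+1$ differences of $m$ and the ladder relations. The first sum is itself a Hermite pseudo-multiplier, now with symbol $\frac{\partial}{\partial x_i}m$, whose $n$ assumed finite differences feed into precisely the same dyadic/summation-by-parts machinery to yield a gradient bound $|\frac{\partial}{\partial x_i}K(x,y)|\le C|x-y|^{-n-1}$, from which the second H\"ormander condition follows by integrating over $|x-y|>2|x-x'|$.

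The main obstacle, as indicated, is this control of the first-variable regularity: one must show that the extra term generated by the $x$-dependence of the symbol, namely the pseudo-multiplier with symbol $\frac{\partial}{\partial x_i}m$, is again of Calder\'on--Zygmund type, and it is the hypothesis that $\frac{\partial}{\partial x_i}m$ satisfies finite-difference bounds up to order $n$ that makes this possible. Keeping careful track of the powers of $2^{j}$ lost to each $x$-differentiation, and verifying that the slightly smaller number $n$ (rather than $n+1$) of differences on $\frac{\partial}{\partial x_i}m$ still suffices for the \emph{integrated} regularity estimate, is the technical heart of the argument; once it is in place, the passage to arbitrary $A_p$ weights over the full range $1<p<\infty$ is the standard Coifman--Fefferman conclusion.
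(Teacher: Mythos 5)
Your overall architecture (dyadic decomposition, kernel estimates extracted from the finite differences via the ladder operators $A_i,A_i^*$, then a weighted Calder\'on--Zygmund theorem) is the right one, but the specific route you propose --- prove that $K(x,y)$ is a globally standard kernel with $|K(x,y)|\leq C|x-y|^{-n}$ and $|\nabla_xK(x,y)|+|\nabla_yK(x,y)|\leq C|x-y|^{-n-1}$, then invoke Coifman--Fefferman --- breaks down at exactly the step you flag as ``the technical heart.'' With $n+1$ finite differences the dyadic pieces (whether you cut off $2k+n\sim 2^j$ spectrally or, as the paper does, set $M_j=MS_j$ with $S_j=e^{-t_{j+1}H}-e^{-t_jH}$) satisfy $|x-y|^{l}|M_j(x,y)|\leq C\,t_{j+1}^{(l-n)/2}$ and $|x-y|^{l}|\nabla M_j(x,y)|\leq C\,t_{j+1}^{(l-n-1)/2}$ only for $l\leq n+1$ (Propositions 4.1 and 4.3). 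At $l=n+1$ the gradient bound is $t_{j+1}^{0}=1$, so the sum over $j$ of the gradient estimates diverges: the pointwise bound $|\nabla K(x,y)|\leq C|x-y|^{-n-1}$ does not follow from these hypotheses (it would require $n+2$ differences). The paper's remedy is to interpolate, for each $j$, between the size bound on $M_j$ and the gradient bound on $\nabla M_j$, producing a summable $\min\{t_{j+1}^{1/4}|y-z|^{-1/2},\,|y-z|^{1/2}t_{j+1}^{-1/4}\}$ and hence only a H\"older-$\frac12$ regularity estimate $|x-z|^{n+\frac12}|K_N(x,y)-K_N(x,z)|\leq C|y-z|^{1/2}$; that, not a gradient bound, is the standard-kernel estimate actually available, and it is what feeds the weak $(1,1)$ and unweighted $L^p$ results.

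The first-variable regularity is in worse shape still, and there your plan cannot be repaired globally. Since only $n$ differences are assumed on $\partial m/\partial x_i$, the term $(\delta_iM)S_j$ can only be estimated against $|x-y|^{l-1}$ for $l\leq n+1$; the paper upgrades to $|x-y|^{l}$ by restricting to $|x-y|\leq K$ (note the hypothesis $|x-y|\leq K$ and the constant $C_K$ in Propositions 2.5 and 4.3), and the term $M_jA_i$ loses an extra factor $t_{j+1}^{-1/2}$ from the unbounded ladder operator. Consequently no global first-variable standard-kernel estimate is available, and Coifman--Fefferman (which in any case needs pointwise H\"older regularity of the kernel, not merely the integral H\"ormander condition, to yield $A_p$ bounds) cannot be applied as you propose. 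What the paper does instead is a sharp maximal function argument: split $T_N=\sum_{j\leq N}M_j$ by a cutoff $\phi(x-y)$ into a near part, where the local $\nabla_x$ estimates and the $\min$-interpolation give $\Lambda^\sharp(T_Nf)\leq C(\Lambda_sf+\Lambda(\Lambda f))$, and a far part controlled by size estimates alone; the $A_p$ inequality then follows from the Fefferman--Stein inequality (Lemma 3.2), after verifying the kernel hypotheses for $m(x,H)$ via the commutators $D^\alpha M=[x,\cdot]^\alpha M$ and Lemma 5.2. Your identification of where the hypotheses on $\partial m/\partial x_i$ enter is correct, but the deduction you build on top of it does not go through.
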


It would be interesting to find a sufficient condition on the
multiplier $m(x, k)$ so that $m(x, H)$ is bounded on $L^2(\R^n)$.
This would amount to proving an analogue of
Calderon-Vaillancourt's theorem for Hermite expansions. Recently
Ruzhansky \cite{RV} and his collaborators have looked at pseudo
differential operators on compact Lie groups and proved certain $ L^2 $ boundedness results. As we deal with a noncompact situation, their proof cannot be adapted to treat Hermite pseudo-multipliers.\\

However, we do have some examples of Hermite pseudo-multipliers which are bounded on $  L^2(\R^n).$ Let $ a(x,w) $ be a function on $ \R^n \times \C^n $ which is radial in the second variable. If we further assume that
$$ \int_{\C^n}  \sup_{x\in\R^n}|a(x,w)| dw < \infty, $$ then it is not difficult to show that the Hermite pseudo-multiplier
$$ Tf(x) = \int_{\C^n}  a(x,w) \pi(w)f(x) dw $$ is bounded on $ L^2(\R^n).$ Another interesting class of $ L^2 $ bounded pseudo-multipliers is given by the following consideration. Suppose $a(x,t)$ is a bounded function on $\R^n\times \R$ which is
$2\pi$-periodic in $t$. If we define
$$m(x,k)=\int_0^{2\pi} a(x,t) e^{-ikt}dt=\hat{a}(x,k),$$
then it can be shown that $m(x,H)$ is bounded on $L^2(\R^n)$.
Moreover, it is possible to translate the conditions on $m$ in
Theorem 1.1 into conditions on the function $a(x,t)$. Thus we have
the following
\begin{cor}
Let $N\geq [\frac{n}{2}]+1$ be an integer. Suppose $a(x,t)$ is a
function on $\R^n\times \R$ which is once differentiable in the
$x$-variable and $N$ times differentiable in the t-variable.
Assume that for $j= 0, 1, 2, \ldots, N$ and $i=0, 1, 2, \ldots, n$
both $ \frac{\partial^j}{\partial t^j} a(x, t)$ and $ \frac{\partial^j}{\partial t^j}
\frac{\partial}{\partial x_i} a(x,t)$ are bounded functions on
$\R^n\times \R$. Then for every $ w\in A_{p/2} $ and $ p > 2, \hat{a}(x, H)$ satisfies the weighted norm
inequality
$$\int_{\R^n}|\hat{a}(x,H)f(x)|^p w(x)dx\leq C~ \int_{\R^n} |f(x)|^p w(x)dx$$
for all $f\in L^p(\R^n, wdx)$.
\end{cor}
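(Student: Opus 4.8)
The plan is to deduce Corollary 1.5 directly from Theorem 1.3, whose hypotheses are stated in terms of the finite differences $\Delta^j m(x,k)$ and $\Delta^j \frac{\partial}{\partial x_i} m(x,k)$. Since $m(x,k) = \hat{a}(x,k)$ is the $k$-th Fourier coefficient of $a(x,t)$ in the $t$-variable, the entire task reduces to translating the smoothness and boundedness assumptions on $a(x,t)$ into the required decay of these finite differences in $k$. Once that translation is established for both $m$ and its $x$-derivatives, the conclusion is immediate from Theorem 1.3 with the $A_{p/2}$ weight and $p>2$ (noting $N \geq [\frac{n}{2}]+1$ supplies exactly the number of differences Theorem 1.3 demands).

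First I would record the elementary observation that the finite-difference operator $\Delta$ in $k$ corresponds, on the Fourier side, to multiplication by $(e^{-it}-1)$ in $t$. Concretely, since
$$ \Delta^j m(x,k) = \int_0^{2\pi} a(x,t)\,(e^{-it}-1)^j\, e^{-ikt}\,dt, $$
the $j$-th difference $\Delta^j m(x,k)$ is itself the $k$-th Fourier coefficient of the function $a(x,t)(e^{-it}-1)^j$. The next step is to extract decay in $k$ by integrating by parts $j$ times in $t$, using the $N$-fold $t$-differentiability of $a$. Each integration by parts against $e^{-ikt}$ produces a factor of $k^{-1}$, and the boundary terms vanish by $2\pi$-periodicity; after $j$ integrations one is left with an integral of a bounded function (a combination of $\frac{\partial^\ell}{\partial t^\ell}[a(x,t)(e^{-it}-1)^j]$ with $\ell \le j$, all bounded by hypothesis) divided by $k^j$. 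This yields $\sup_x |\Delta^j m(x,k)| \leq C_j\, k^{-j}$, which for $k \geq 1$ is comparable to $(2k+n)^{-j}$; the finitely many small values of $k$ are handled by the trivial bound $|\Delta^j m(x,k)| \le C$ coming from boundedness of $a$. The same argument applied to $\frac{\partial}{\partial x_i} a(x,t)$, which is bounded together with its $t$-derivatives up to order $N$ by assumption, gives the analogous estimates for $\frac{\partial}{\partial x_i} m(x,k) = \frac{\partial}{\partial x_i}\hat{a}(x,k) = \widehat{\frac{\partial a}{\partial x_i}}(x,k)$ for $j$ up to $[\frac{n}{2}]$.

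The main technical point to be careful about is the bookkeeping introduced by the factor $(e^{-it}-1)^j$: when integrating by parts one must differentiate the product $a(x,t)(e^{-it}-1)^j$, and one should verify that all the resulting terms involve at most $N$ derivatives of $a$ in $t$ and remain bounded, so that the choice $N \geq [\frac{n}{2}]+1$ exactly covers the highest difference order required by Theorem 1.3. I expect this to be the only genuinely delicate step, and it is essentially a routine Leibniz-rule estimate rather than a real obstacle. With both families of difference estimates in hand, $\hat{a}(x,H) = m(x,H)$ satisfies the hypotheses of Theorem 1.3, and the stated weighted norm inequality for $p>2$ and $w \in A_{p/2}$ follows at once.
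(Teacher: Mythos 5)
Your translation of the finite-difference hypotheses is essentially the paper's own argument: the paper packages your two steps (multiplying by $(e^{-it}-1)^j$ and then integrating by parts $j$ times against $e^{-ikt}$) into the single operator $Da(x,t)=\partial_t\bigl((e^{-it}-1)a(x,t)\bigr)$, via the identity $ik\,\Delta\hat{a}(x,k)=\int_0^{2\pi}Da(x,t)e^{-ikt}\,dt$, iterated $j$ times; the Leibniz bookkeeping you describe is exactly what makes $D^ja$ bounded under the stated hypotheses. So that part is fine and matches the paper.

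However, there is a genuine gap: Theorem 1.3 has as an explicit hypothesis that $m(x,H)$ is \emph{already bounded on $L^2(\R^n)$}, and the paper stresses that this does \emph{not} follow from the finite-difference conditions (finding such a sufficient condition is stated as an open problem, an analogue of Calder\'on--Vaillancourt). Your proposal says the entire task reduces to translating the smoothness of $a$ into decay of $\Delta^j m(x,k)$, and never verifies $L^2$ boundedness, so as written you cannot invoke Theorem 1.3. The fix is short but essential, and it is the first thing the paper does in Section 5: since $a$ is $2\pi$-periodic in $t$ and $\hat{a}(x,k)=\int_0^{2\pi}a(x,t)e^{-ikt}\,dt$, one has the representation
$$\hat{a}(x,H)f(x)=\int_0^{2\pi}a(x,t)\,e^{-itH}f(x)\,dt,$$
and Minkowski's integral inequality together with the unitarity of $e^{-itH}$ on $L^2(\R^n)$ gives $\|\hat{a}(x,H)f\|_2\leq 2\pi\,\|a\|_\infty\|f\|_2$. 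With that added, your argument closes and coincides with the paper's proof.
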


 We remark that the condition on $a$ can be replaced by the weaker
 assumption
 $$\sup_{x,t}(|D^ja(x,t)|+|D^j \nabla_x a(x, t)|)\leq C_j, ~~~~ 0 \leq j \leq N, $$
 where $D a(x, t)= \partial_t(e(t)a(x,t))$ and $  e(t)=(e^{-it}-1).
 $ This is immediate from the fact that
 $$ik \Delta \hat{a}(x,k)=\int_0^{2\pi} Da(x,t)e^{-ikt}dt.$$
 It would be interesting to see if the assumption on $\nabla_x
 a(x, t)$ can be dispensed with. Right now we do not know how to
 do that as our method of proving Theorem 1.1 requires the
 assumption on $\nabla_x a$.\\

 The problem of proving $L^p$ boundedness of an operator $M$ which
 is known to be bounded on $L^2(\R^n)$ has been studied by
 Mauceri \cite{M}. Using a transference technique and a theorem on Weyl
 multipliers he found a sufficient condition on $M$ so that it
 extends to a bounded operator on $L^p(\R^n)$. Since we use many
 ideas from his paper it is worthwhile to recall his result. The
 hypothesis on $M$ involves non-commutative derivatives $\delta_j$
 and $\bar{\delta}_j$ defined by
 $$\delta_j M=[M, A_j], \; \bar{\delta}_j= -[M, A^*_j]$$
 where $A_j= \frac{\partial}{\partial \xi_j}+\xi_j$ and $A_j^*= -\frac{\partial}{\partial \xi_j}+ \xi_j, \; j=1, 2, \ldots, n.$
 Higher order (non-commutative) derivatives $\delta^\alpha$ and
 $\bar{\delta}^\beta, \alpha, \beta\in \N^n,$ are defined in the usual
 way. Let $\chi_N$ stand for the dyadic projection $\sum_{2^{N-1}\leq
 2k+n< 2^N} P_k$. Mauceri's condition on $M$ reads as
 $$\sup_{N\in \N} 2^{N(|\alpha|+|\beta|-n)}||(\delta^\alpha \bar{\delta}^\beta M)\chi_N||^2_{HS}\leq C$$ where $ \|\cdot \|_{HS} $ stands for the Hilbert-Schmidt operator norm. If the above condition is satisfied for all $\alpha, \beta$ with
 $|\alpha|+|\beta|\leq n+1$, then he has proved that $M$ extends to
 $L^p(\R^n)$ as a bounded operator for $1 < p \leq 2$.\\

 Note that the number of derivatives involved is $n+1$ since the
 result is deduced from a corresponding theorem on Weyl multipliers
 on $\C^n\cong \R^{2n}$. We have every reason to believe that
 $[\frac{n}{2}] +1 $ derivatives suffice for the $L^p$-
 boundedness.\\

 Let us examine Mauceri's condition when $M=m(H)$ is a Hermite
 multiplier. In this case the kernel of $M\chi_N$ is given by
 $$(M\chi_N)(x,y)= \sum_{2^{N-1}\leq 2k+n< 2^N} m(2k+n) \Phi_k(x,y)$$
 where $\Phi_k(x,y)= \sum_{|\alpha|=k} \Phi_\alpha(x)
 \Phi_\alpha(y)$ is the kernel of $P_k$. Here $\Phi_\alpha$'s stand
 for the normalised Hermite functions on $\R^n$. Thus Mauceri's
 condition with $\alpha=\beta=0$ reads as
 $$\sum_{2^{N-1}\leq 2k+n< 2^N}|m(2k+n)|^2 (2k+n)^{n-1}\leq C~ 2^{Nn}$$
 where we have used the fact that
 $$\int_{\R^n\times \R^n}(\Phi_k(x,y))^2 dxdy=\frac{(k+n-1)!}{k!(n-1)!}\approx (2k+n)^{n-1}.$$
 On the other hand
 $$\int_{\R^n}|(M\chi_N)(x,y)|^2dy=\sum_{2^{N-1}\leq 2k+n< 2^N} |m(2k+n)|^2 \Phi_k(x,x).$$
 Since we have $\Phi_k(x,x)\leq C (2k+n)^{\frac{n}{2}-1}$ (see Lemma 3.2.2 in \cite{T})
 $$\int_{\R^n}|(M\chi_N)(x,y)|^2dy\leq C~ 2^{-\frac{n}{2} N}||M\chi_N||^2_{HS}$$
 and Mauceri's condition implies the better estimate
 $$\int_{\R^n}|(M\chi_N)(x,y)|^2dy\leq C~ 2^{N\frac{n}{2}}.$$
 We can also check that for any $\alpha, \beta\in \N^n$
$$ \int_{\R^n}|((\delta^\alpha \bar{\delta}^\beta
 M)\chi_N)(x,y)|^2dy\leq C~ 2^{N(\frac{n}{2}-|\alpha|-|\beta|)}$$
 under Mauceri's conditions on $\delta^\alpha \bar{\delta}^\beta
 M$.\\

We work with the above relatively weaker conditions on $M$ and
prove the following result. We say that a bounded linear operator
$T$ on $L^2(\R^n)$ is of class $C^k$ if $\delta^\alpha
\bar{\delta}^\beta T$ is bounded on $L^2(\R^n)$ for all
$|\alpha|+|\beta|\leq k$. For a given Hilbert-Schmidt operator
$T$, we denote by $T(x,y)$ its kernel, which is an element of
$L^2(\R^n\times \R^n)$. Note that $(\delta^\alpha
\bar{\delta}^\beta M)\chi_N$ is Hilbert-Schmidt whenever
$\delta^\alpha \bar{\delta}^\beta M$ is bounded on $L^2(\R^n)$ and
hence has an $L^2$-kernel.\\

\begin{thm}
Let $M\in B(L^2(\R^n))$ be of class $C^{[\frac{n}{2}]+1}$ and
satisfy the estimates
$$\sup_{x\in\R^n} \int_{\R^n}|((\delta^\alpha \bar{\delta}^\beta M)\chi_N)(x,y)|^2dy\leq C~ 2^{N(\frac{n}{2}-|\alpha|-|\beta|)}$$
for all $|\alpha|+|\beta|\leq [\frac{n}{2}]+1$ and $N\in \N$.
Assume that $M^*$ also satisfies the same estimates. Then $M$ can
be extended to $L^p(\R^n)$ as a bounded operator for $1<p<\infty$.
\end{thm}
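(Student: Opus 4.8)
The plan is to show that $M$ is a Calderón–Zygmund operator for the ordinary Euclidean structure on $\R^n$ and to recover the full range $1<p<\infty$ by proving a weak type $(1,1)$ bound for $M$ together with the analogous bound for $M^*$. Since the hypotheses are symmetric under $M\mapsto M^*$, once I know that any operator satisfying them is of weak type $(1,1)$, Marcinkiewicz interpolation with the assumed $L^2$ bound gives $M$ bounded on $L^p$ for $1<p\le2$, and applying the same conclusion to $M^*$ and dualizing gives $M$ bounded on $L^p$ for $2\le p<\infty$. Accordingly I would write $M=\sum_N M\chi_N$, denote by $K_N(x,y)$ the kernel of $M\chi_N$, and treat $K=\sum_N K_N$ as the (off-diagonal) kernel of $M$.

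The engine of the proof is a family of weighted $L^2$ estimates for the pieces $K_N$. The crucial algebraic identity is that, at the level of kernels, $\delta_j-\bar\delta_j$ acts as multiplication by $2(x_j-y_j)$, so that iterating it produces the factor $(x-y)^\gamma$. From the hypothesis on $M$ I therefore expect
\[
\sup_x\int_{\R^n}|x-y|^{2|\gamma|}\,|K_N(x,y)|^2\,dy\le C\,2^{N(\frac n2-|\gamma|)},\qquad |\gamma|\le[\tfrac n2]+1,
\]
while the hypothesis on $M^*$, via the identity $K^*(x,y)=\overline{K(y,x)}$ for the adjoint kernel, supplies the companion bound with $\int dx$ in place of $\int dy$; one also needs the corresponding estimates for $\nabla_yK_N$ and $\nabla_xK_N$, which cost a factor $2^N$ in the squared integral. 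Geometrically these say that $K_N$ is concentrated at the scale $2^{-N/2}$ with height $2^{Nn/2}$. To verify the Hörmander regularity condition $\int_{|x-y|>2|y-y'|}|K(x,y)-K(x,y')|\,dx\le C$ I would split the $N$-sum at $2^N\sim|y-y'|^{-2}$: for the high frequencies I bound each term by its size, decompose the integration region into Euclidean annuli and apply Cauchy–Schwarz against the weight $|x-y|^{2s}$ with $s=[\tfrac n2]+1$; the inequality $s>\tfrac n2$ is exactly what makes both the annulus sum and the ensuing $N$-sum geometric and summable to $O(1)$. For the low frequencies I use the mean value theorem together with the gradient estimate in the same way.

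The main obstacle is that the dyadic projections $\chi_N$ do not commute with the creation and annihilation operators $A_j,A_j^*$, which shift the Hermite eigenvalue by $\pm2$; consequently $(\delta^\alpha\bar\delta^\beta M)\chi_N$ is \emph{not} literally the operator with kernel $(x-y)^\gamma K_N$. Expanding the commutators produces error terms built from $[\chi_N,A_j]=\delta_j\chi_N$ and $[\chi_N,A_j^*]=-\bar\delta_j\chi_N$, for instance $M(\bar\delta_j-\delta_j)\chi_N$. Because $A_j,A_j^*$ move mass only between adjacent spectral blocks, these commutators are supported at the boundary of the $N$-th block, and the standing assumption that $M$ is of class $C^{[\frac n2]+1}$ guarantees that all the operators appearing are bounded on $L^2$. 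The technical heart of the argument is thus a bookkeeping showing that every such boundary error obeys the same weighted $L^2$ bounds as the main term and stays summable in $N$ after the Cauchy–Schwarz step; this is precisely where the count $[\tfrac n2]+1$ of non-commutative derivatives enters and is seen to be almost optimal. Once these estimates are established, the Calderón–Zygmund theorem yields the weak type $(1,1)$ bound, and interpolation together with duality finishes the proof.
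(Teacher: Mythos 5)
Your high-level strategy already diverges from the paper's: the paper does \emph{not} prove a weak type $(1,1)$ bound under the $C^{[\frac{n}{2}]+1}$ hypothesis (that is reserved for Theorem 1.7, which demands $C^{n+1}$). Instead it runs a Fefferman--Stein argument, proving $\Lambda^\sharp(T_Nf)\leq C(\Lambda_2f+\Lambda(\Lambda_2f))$ with $\Lambda_2f=(\Lambda|f|^2)^{1/2}$; since $\Lambda_2$ is bounded only for $p>2$, this yields $L^p$ for $p>2$ directly from the hypothesis on $M$, and the range $1<p<2$ then comes from applying the same to $M^*$ and dualizing. So you are claiming a strictly stronger intermediate result (weak $(1,1)$ with only $[\frac{n}{2}]+1$ derivatives) than the authors were able to establish; that alone should make you suspicious.

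The concrete gap is in your decomposition. You take $M=\sum_N M\chi_N$ with the \emph{sharp} dyadic spectral projections and propose to obtain the weighted bounds $\sup_x\int|x-y|^{2|\gamma|}|K_N(x,y)|^2dy\leq C2^{N(\frac n2-|\gamma|)}$ by commuting $(x-y)^\gamma$ through, accepting boundary errors built from $[\chi_N,A_j]$. But $[\chi_N,A_j]=(P_{k_1}-P_{k_0-1})A_j$, where $k_0,k_1$ are the endpoints of the $N$-th block: this is a rank-restricted piece of $A_j$ itself, with operator norm $\sim 2^{N/2}$, and there is \emph{no} gain of $2^{-N}$ per commutator (the finite differences of the indicator $\mathbf{1}_{[2^{N-1},2^N)}$ are $O(1)$ spikes, not $O(2^{-N})$). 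Consequently a single error term such as $M[\chi_N,A_j]$ already contributes $\sim 2^{N(\frac n2+1)}$ to the squared weighted row-norm where $2^{N(\frac n2-1)}$ is required --- larger than the main term by $2^{2N}$, and the subsequent $N$-sums diverge. These errors cancel only upon telescoping over $N$, which is useless for a block-by-block estimate. This is exactly why the paper replaces your decomposition by the smooth one $M=\sum_j MS_j$ with $S_j=e^{-t_{j+1}H}-e^{-t_jH}$: Mauceri's lemma writes $\delta^\gamma\bar\delta^\rho S_j$ in terms of finite differences of the smooth symbol $\varphi_j$, each of which \emph{does} gain a factor $\sim 2^{-N}$ on the $N$-th block (Proposition 2.1), and the sharp $\chi_N$'s are inserted only in the position $(\delta^\mu\bar\delta^\nu M)\chi_N\cdot\chi_N(\delta^\gamma\bar\delta^\rho S_j)$ where the hypothesis applies verbatim and no further commutators with $\chi_N$ are needed. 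To repair your argument you would have to adopt such a smooth Littlewood--Paley decomposition; and even then you should either prove the sharp maximal estimate as the paper does, or carefully justify the integrated H\"ormander condition (including the even-$n$ borderline, which needs the half-integer weight $|x-y|^{n+1}$ obtained by Cauchy--Schwarz between $l=\frac n2$ and $l=\frac n2+1$, and the fact that the gradient bounds for $\nabla_xM_j$ are only local in $|x-y|$).
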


If we increase the number of non-commutative derivatives, then we
can dispense with the condition on $M^*$. In view of the remarks
made earlier regarding Mauceri's condition, the following result
can be considered as the analogue of Mauceri's theorem wherein his
condition is replaced by kernel estimates.
\begin{thm}
Let $M\in B(L^2(\R^n))$ be of class $C^{n+1}$ and satisfy the
estimates
$$\sup_{x\in \R^n}\int_{\R^n}|((\delta^\alpha \bar{\delta}^\beta M)\chi_N)(x,y)|^2 dy\leq C~ 2^{N(\frac{n}{2}-|\alpha|-|\beta|)}$$
for all $|\alpha|+|\beta|\leq n+1$ and $ N \in \N$. Then
$M$ can be extended to $L^p(\R^n)$ as a bounded operator for $1<
p<\infty$. Moreover, $M$ is of weak type $(1,1)$.
\end{thm}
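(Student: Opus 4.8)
The plan is to show that $M$ is a Calder\'on--Zygmund operator, i.e.\ that its kernel $K(x,y)=\sum_N K_N(x,y)$, where $K_N$ denotes the kernel of $M\chi_N$, satisfies both H\"ormander integral conditions. Granting the assumed $L^2$ boundedness, the first condition yields the weak type $(1,1)$ bound and, by Marcinkiewicz interpolation, $L^p$ boundedness for $1<p\le 2$; the second (dual) condition gives $L^p$ boundedness for $2<p<\infty$. The whole argument is run band by band and then summed over the dyadic parameter $N$, which plays the role of the frequency $2^{N/2}$.

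First I would convert the hypotheses into weighted $L^2$ kernel estimates. Writing out the kernels of $\delta_jM$ and $\bar\delta_jM$ one finds that the kernel of $\tfrac12(\delta_j-\bar\delta_j)M$ is exactly $(y_j-x_j)K$, while that of $-\tfrac12(\delta_j+\bar\delta_j)M$ is $(\partial_{x_j}+\partial_{y_j})K$. Since $[A_j,A_k^*]$ is a scalar, the operators $\delta_j$ and $\bar\delta_k$ commute, so iterating the first relation and expanding expresses multiplication of the kernel by $(x-y)^\gamma$ as a linear combination of the $\delta^\alpha\bar\delta^\beta M$ with $|\alpha|+|\beta|=|\gamma|$. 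The hypothesis then gives $\sup_x\int_{\R^n}|(x-y)^\gamma K_N(x,y)|^2\,dy\le C\,2^{N(\frac n2-|\gamma|)}$ for all $|\gamma|\le n+1$. These are decay estimates: by Cauchy--Schwarz on the annulus $|x-y|\sim R$ they give $\int_{|x-y|\sim R}|K_N(x,y)|\,dy\le C\,(2^{N/2}R)^{\frac n2-|\gamma|}$, which after splitting at $2^{N/2}R\sim 1$ (using $\gamma=0$ for small $N$ and $|\gamma|$ large for large $N$) is summable in $N$ precisely when $|\gamma|>n/2$ is allowed, that is, when $n+1$ finite differences are available.

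For the regularity demanded by H\"ormander's condition the non-commutative derivatives are not directly enough, since $\delta_j+\bar\delta_j$ produces only the diagonal field $\partial_{x_j}+\partial_{y_j}$ and never the transverse derivative. Here I would use band-limitedness: because $\chi_N$ sits on the right, $K_N(x,\cdot)$ is spectrally localized to the Hermite band $2^{N-1}\le 2k+n<2^N$, so a Bernstein inequality for Hermite expansions yields $\|\partial_y^\beta K_N(x,\cdot)\|_2\le C\,2^{N|\beta|/2}\|K_N^{\sim}(x,\cdot)\|_2$ on slightly enlarged bands. Combined with the weighted estimates of the previous step this controls $\partial_y^\beta[(x-y)^\gamma K_N]$ in $L^2(dy)$, and a mean value argument converts this into the H\"ormander-type regularity of $K$ in the second variable, with the $|\gamma|\le n+1$ decay making the series over $N$ convergent.

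The main obstacle, and the reason $n+1$ rather than $[\frac n2]+1$ derivatives are required, is symmetrization. The hypotheses are stated in the form $\sup_x\int_y$, which is adapted to regularity in $x$ integrated in $dy$, whereas the Calder\'on--Zygmund decomposition also needs the transposed estimates $\sup_y\int_x$, equivalently the same bounds for $M^*$; since $K_N$ is a priori band limited only in the $y$ variable these cannot be read off directly, and in the weaker theorem with $[\frac n2]+1$ derivatives they had to be imposed on $M^*$ separately. I would recover them from the off-diagonal decay of $M$ in the band decomposition: because $A_j,A_j^*$ shift the Hermite bands by one, controlling $\delta^\alpha\bar\delta^\beta M$ for $|\alpha|+|\beta|\le n+1$ amounts to controlling finite differences of $\chi_{N'}M\chi_N$ along the band lattice, forcing $\|\chi_{N'}M\chi_N\|$ to decay in $|N'-N|$ fast enough to be summed. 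This makes $M\chi_N$ effectively band limited on both sides, allows one to transpose the weighted $L^2$ estimates into the $\sup_y\int_x$ form, and thereby furnishes the dual H\"ormander condition. With both H\"ormander conditions in hand, the standard Calder\'on--Zygmund machinery delivers the weak type $(1,1)$ bound together with $L^p$ boundedness for all $1<p<\infty$.
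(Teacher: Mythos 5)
There is a genuine gap at the step you yourself identify as the main obstacle: the passage from the hypothesis $\sup_x\int_y$ to the transposed information needed for the second Calder\'on--Zygmund condition. Your proposed mechanism --- that boundedness of $\delta^\alpha\bar\delta^\beta M$ for $|\alpha|+|\beta|\le n+1$ forces decay of $\|\chi_{N'}M\chi_N\|$ in $|N'-N|$, which then makes $M\chi_N$ ``band limited on both sides'' and lets you transpose the weighted $L^2$ estimates into $\sup_y\int_x$ form --- does not work as described. Even granting two-sided spectral localization, writing $M\chi_N\approx\chi_{N'}M\chi_N$ gives $\sup_y\int_x|(\chi_{N'}G)(x,y)|^2dx\le\|\chi_{N'}\|_{op}^2\sup_y\int_x|G(x,y)|^2dx$, i.e.\ you still need the column estimate you were trying to produce; spectral localization in the $x$-variable is simply not the same datum as $\sup_y\int_x$ control of the kernel. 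The paper's actual resolution is different on both halves of the range. For $1<p\le 2$ it never uses a dual H\"ormander condition: the point of having all $n+1$ derivatives is that Cauchy--Schwarz applied to the composition $(\delta^\mu\bar\delta^\nu M)\chi_N\cdot\chi_N(\delta^\gamma\bar\delta^\rho S_j)$ upgrades the $L^2(dy)$ kernel bounds to \emph{pointwise} bounds $\sup_{x,y}|x-y|^l|M_j(x,y)|\le Ct_{j+1}^{(l-n)/2}$ (Proposition 4.1) and similar bounds for $\nabla_yM_j$ (Proposition 4.3); a pointwise bound can be integrated in whichever variable one likes, and integrating in $x$ yields the H\"ormander condition in the second variable and hence weak $(1,1)$. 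For $2<p<\infty$ the paper does not use the dual condition either: it invokes Theorem 1.8, proved via the Fefferman--Stein sharp maximal function estimate $\Lambda^\sharp(T_Nf)\le C(\Lambda_2f+\Lambda(\Lambda_2f))$, which requires only the given (untransposed) $\sup_x\int_y$ estimates together with regularity of the kernel in the first variable. Your proposal is missing both of these ideas, and the transposition it relies on instead is not justified.

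A secondary but real problem is your choice of the sharp dyadic projections $\chi_N$ as the basic decomposition. The identity expressing $(x-y)^\gamma K_N(x,y)$ through the hypothesis requires commuting $\delta^\alpha\bar\delta^\beta$ past $\chi_N$: the hypothesis concerns $(\delta^\alpha\bar\delta^\beta M)\chi_N$, whereas multiplication of the kernel of $M\chi_N$ by $(x-y)^\gamma$ corresponds to $\delta^\alpha\bar\delta^\beta(M\chi_N)$, and by Leibniz the difference involves terms $(\delta^\mu\bar\delta^\nu M)(\delta^{\gamma'}\bar\delta^{\rho'}\chi_N)$. Since $A_j,A_j^*$ carry weights of size $2^{N/2}$ and $\chi_N$ is a sharp cutoff, $\delta^{\gamma'}\bar\delta^{\rho'}\chi_N$ has operator norm of order $2^{N(|\gamma'|+|\rho'|)/2}$ (concentrated at the band edges) rather than the $2^{-N(|\gamma'|+|\rho'|)/2}$ your estimate needs, so the claimed bound $\sup_x\int|(x-y)^\gamma K_N(x,y)|^2dy\le C2^{N(n/2-|\gamma|)}$ does not follow termwise; one would have to exploit telescoping over $N$. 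This is precisely why the paper decomposes $M=\sum_jMS_j$ with the smooth spectral cutoffs $S_j=\varphi_j(H)$, $\varphi_j(x)=e^{-t_{j+1}x}-e^{-t_jx}$, for which Mauceri's lemma expresses $\delta^\gamma\bar\delta^\rho S_j$ through finite differences of $\varphi_j$ and yields the correct gain $2^{-N(|\gamma|+|\rho|)/2}$ (Proposition 2.1). Your Bernstein-inequality idea for $y$-regularity is reasonable in spirit, but it too must be run through a smooth decomposition to be made rigorous.
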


We also have weighted norm inequalities for the operator $M$.
\begin{thm}
Let $M\in B(L^2(\R^n))$ be of class $C^{[\frac{n}{2}]+1}$ and
satisfy the estimates
$$\sup_{x\in \R^n} \int_{\R^n} |((\delta^\alpha \bar{\delta}^\beta M)\chi_N)(x,y)|^2dy\leq C~2^{N(\frac{n}{2}-|\alpha|-|\beta|)}.$$
for all $|\alpha|+|\beta|\leq
[\frac{n}{2}]+1$ and $ N \in \N.$ Then for every  $w\in A_{p/2}(\R^n)$ and $p>2,~M$ satisfies the weighted norm inequality
$$\int_{\R^n} |Mf(x)|^p w(x)dx \leq C~ \int_{\R^n}|f(x)|^pw(x)dx$$
for all $f\in L^p(\R^n)$.
\end{thm}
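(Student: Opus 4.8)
The plan is to derive the weighted inequality, for $p>2$ and $w\in A_{p/2}$, from a single Córdoba--Fefferman type estimate in which the Hardy--Littlewood maximal operator $\mathcal{M}$ appears only on the right:
\[
\int_{\R^n}|Mf(x)|^2\,u(x)\,dx\le C\int_{\R^n}|f(x)|^2\,\mathcal{M}u(x)\,dx,\qquad u\ge 0 ;
\]
we call this estimate $(\ast)$. Granting $(\ast)$, the passage to $A_{p/2}$ weights is soft. Writing $r=p/2>1$ and using $\||Mf|^2\|_{L^r(w)}=\|Mf\|_{L^p(w)}^2$, choose $0\le g$ with $\|g\|_{L^{r'}(w)}=1$ and $\||Mf|^2\|_{L^r(w)}=\int|Mf|^2 g\,w$. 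Applying $(\ast)$ with $u=gw$ and then H\"older with exponents $r,r'$ against the measure $w\,dx$ gives $\int|Mf|^2 gw\le C\|f\|_{L^p(w)}^2\,\big(\int(\mathcal{M}(gw))^{r'}w^{1-r'}\big)^{1/r'}$. Since $w\in A_{p/2}$ is equivalent to $w^{1-r'}\in A_{r'}$, the operator $\mathcal{M}$ is bounded on $L^{r'}(w^{1-r'})$, so the last factor is $\le C\|g\|_{L^{r'}(w)}=C$; taking the supremum over $g$ yields the theorem. (If one wants the right-hand maximal function in $(\ast)$ to lie cleanly in $A_1$, one runs this with a power maximal function $\mathcal{M}_\rho u=(\mathcal{M}(u^\rho))^{1/\rho}$, $\rho>1$ close to $1$, using the openness of the $A_p$ classes.)

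The main work is $(\ast)$. I would decompose $M=\sum_N M\chi_N$ along the dyadic spectral projections, noting that $M\chi_N f=\int (M\chi_N)(x,y)\,f_N(y)\,dy$ with $f_N=\chi_N f$, so that the inputs are the Hermite Littlewood--Paley pieces of $f$. The first step is to convert the hypotheses into off-diagonal $L^2$ kernel bounds. The key algebraic fact is that $\delta_j-\bar\delta_j=[M,A_j]+[M,A_j^*]=[M,A_j+A_j^*]=2[M,\xi_j]$, whose kernel is $2(y_j-x_j)M(x,y)$, and that $\delta_j,\bar\delta_j$ commute because $[A_j,A_j^*]=2$ is scalar. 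Hence multiplication of the kernel by $(x-y)^\gamma$ is a combination of the $\delta^\alpha\bar\delta^\beta$ with $|\alpha|+|\beta|=|\gamma|$, and the hypotheses translate (after accounting for the commutators of $A_j,A_j^*$ with $\chi_N$) into
\[
\sup_{x}\int_{\R^n}|(x-y)^\gamma (M\chi_N)(x,y)|^2\,dy\le C\,2^{N(\frac n2-|\gamma|)},\qquad |\gamma|\le s:=[\tfrac n2]+1 .
\]
Splitting $\R^n_y$ into Hermite annuli $|x-y|\sim 2^{\,j-N/2}$ and using the case $|\gamma|=s$ on each annulus gives $\int_{|x-y|\sim 2^{j-N/2}}|(M\chi_N)(x,y)|^2dy\le C\,2^{Nn/2}2^{-2sj}$. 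A Cauchy--Schwarz on each annulus, together with $\int_{|x-y|\sim 2^{j-N/2}}|f_N|^2\le C\,2^{(j-N/2)n}\mathcal{M}(|f_N|^2)(x)$, yields the per-piece domination
\[
|M\chi_N f(x)|\le C\sum_{j\ge0}2^{\,j(\frac n2-s)}\big(\mathcal{M}(|f_N|^2)(x)\big)^{1/2}\le C\big(\mathcal{M}(|f_N|^2)(x)\big)^{1/2},
\]
the series converging precisely because $s=[\tfrac n2]+1>\tfrac n2$. This Sobolev threshold fixes the number of derivatives, and the squaring of $f$ forced by Cauchy--Schwarz is exactly what restricts us to $p>2$ with $A_{p/2}$ weights.

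It remains to sum in $N$. Since the per-piece bound has no decay in $N$, summability must come entirely from the orthogonality of the spectral inputs $f_N$. I would pass to the square function $Gf=(\sum_N|M\chi_N f|^2)^{1/2}$: by the previous step and the dual Fefferman--Stein inequality $\int\mathcal{M}(h)\,u\le C\int h\,\mathcal{M}u$,
\[
\int (Gf)^2 u=\sum_N\int|M\chi_N f|^2 u\le C\int\Big(\sum_N\mathcal{M}(|f_N|^2)\Big)u\le C\int\Big(\sum_N|f_N|^2\Big)\mathcal{M}u=C\int (g_H f)^2 \mathcal{M}u,
\]
where $g_Hf=(\sum_N|\chi_N f|^2)^{1/2}$ is the Hermite Littlewood--Paley function. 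As $\mathcal{M}u\in A_2$ (up to the power-maximal adjustment above), the weighted bound $\int(g_Hf)^2 v\le C\int|f|^2 v$ for $v\in A_2$, available from the weighted Hermite multiplier theory of \cite{GHSTV,HTV}, gives $\int (Gf)^2 u\le C\int|f|^2\mathcal{M}u$.

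The remaining point, which I expect to be the main obstacle, is the reverse inequality $\int|Mf|^2 u\le C\int (Gf)^2 u$: the control of $Mf=\sum_N M\chi_N f$ by the square function of its pieces in the presence of the \emph{arbitrary} weight $u$. This is a weighted almost-orthogonality statement for the family $\{M\chi_N\}$, and it is exactly here that the class $C^{[\frac n2]+1}$ hypothesis enters: the non-commutative derivatives measure the failure of $M$ to commute with the spectral projections, so that the cross terms $\int M\chi_N f\,\overline{M\chi_{N'}f}\,u$ can be estimated directly from the annular kernel bounds by a geometrically decaying multiple of $\int|f|^2\mathcal{M}u$, and the resulting Cotlar--Stein type series closes $(\ast)$. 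Making the spectral orthogonality of the inputs survive the physical-space weight $u$ is the delicate step; everything else is either the soft duality of the first paragraph or now-standard maximal and Littlewood--Paley estimates.
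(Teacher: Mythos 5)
Your opening reduction is sound: deriving the $A_{p/2}$ inequality for $p>2$ from the dual estimate $(\ast)$, $\int|Mf|^2u\le C\int|f|^2\Lambda u$, via $r=p/2$, duality in $L^r(w)$ and the boundedness of $\Lambda$ on $L^{r'}(w^{1-r'})$, is correct. But the two steps on which $(\ast)$ itself rests both have genuine gaps. First, your decomposition uses the sharp spectral cutoffs $\chi_N$, and the claimed translation of the hypotheses into $\sup_x\int|(x-y)^\gamma(M\chi_N)(x,y)|^2dy\le C2^{N(\frac n2-|\gamma|)}$ does not go through. Indeed $(x_j-y_j)(M\chi_N)(x,y)$ is the kernel of $[x_j,M\chi_N]=[x_j,M]\chi_N+M[x_j,\chi_N]$; the first term is what the hypothesis controls, but $[\chi_N,A_j]$ is \emph{not} small: $A_j$ shifts eigenspaces, so $[\chi_N,A_j]$ is supported on the two boundary eigenvalues of the block and has operator norm of order $2^{N/2}$ there, while the restriction to those eigenvalues does not improve the $\sup_x\int_y$ bound on $M$. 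Each such commutator therefore costs a factor $2^{N}$ in the squared estimate rather than gaining $2^{-N}$, and the off-diagonal decay you need is lost. This is exactly why the paper does not decompose along $\chi_N$ but along the smooth heat differences $S_j=e^{-t_{j+1}H}-e^{-t_jH}$: Mauceri's Lemma 2.2 expresses $\delta^\gamma\bar\delta^\rho\phi(H)$ through finite differences of $\phi$, and for the smooth symbols $\phi_j$ these finite differences supply the factor $2^{-N(|\gamma|+|\rho|)}$ (Proposition 2.1), which is then combined with the hypothesis on $\delta^\mu\bar\delta^\nu M$ through the factorization $(\delta^\mu\bar\delta^\nu M)\chi_N\cdot\chi_N(\delta^\gamma\bar\delta^\rho S_j)$.

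Second, and more seriously, the recombination $\int|Mf|^2u\le C\int(Gf)^2u$ --- which you yourself identify as the main obstacle --- is not proved, and nothing in the hypotheses obviously yields the almost-orthogonality of the cross terms $\int M\chi_Nf\,\overline{M\chi_{N'}f}\,u$: after applying $M$ the pieces retain no spectral localization, so a Cotlar--Stein argument has no decay parameter to work with, and against an arbitrary weight $u$ even unweighted orthogonality would not suffice. (The auxiliary weighted Littlewood--Paley bound for the sharp projections $\chi_N$ that you invoke from \cite{GHSTV, HTV} is also not what those references contain.) The paper avoids ever having to recombine spectrally decomposed pieces: it proves the pointwise sharp-function bound $\Lambda^{\sharp}(\sum_{j\le N}M_jf)\le C(\Lambda_2f+\Lambda\Lambda_2f)$, where the sum over the scales $j$ converges \emph{absolutely} because each $M_j$ contributes $\min\bigl\{l(Q)^{1/2}t_{j+1}^{-1/4},\,t_{j+1}^{1/4}l(Q)^{-1/2}\bigr\}$; obtaining that minimum requires not only the decay $\int|x-y|^{n+1}|M_j(x,y)|^2dy\le Ct_{j+1}^{1/2}$ (Corollary 2.4) but also the $x$-gradient estimates of Proposition 2.5, i.e.\ the part of the $C^{[\frac n2]+1}$ hypothesis involving $\delta_iM$, which your argument never uses. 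The conclusion then follows from the Fefferman--Stein inequality with $A_{p/2}$ weights, since $\Lambda_2$ is bounded on $L^p(w)$ precisely for $w\in A_{p/2}$. To salvage your approach you would need either to redo the kernel estimates with a smooth spectral decomposition or to supply a genuine weighted almost-orthogonality lemma; as written the argument does not close.
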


Note that in the above theorem there are restrictions on $p$ and
the weight function. Once again, by increasing the number of
derivatives in the hypothesis on $M$ we can obtain the following
result
\begin{thm}Let $M\in B(L^2(\R^n))$ be of class $C^{n+1}$
and satisfy the estimates
$$\sup_{x\in \R^n} \int_{\R^n} |((\delta^\alpha \bar{\delta}^\beta M)\chi_N)(x,y)|^2dy\leq C~2^{N(\frac{n}{2}-|\alpha|-|\beta|)}.$$
for all $|\alpha|+|\beta|\leq n+1$ and $ N \in \N$. Then for every $w\in A_p(\R^n)$ and $1<p<\infty,
 ~M$ satisfies the weighted norm inequality
$$\int_{\R^n} |Mf(x)|^p w(x)dx \leq C ~ \int_{\R^n} |f(x)|^pw(x)dx$$
for all $f\in L^p(\R^n)$.
\end{thm}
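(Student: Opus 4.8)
The plan is to show that, under the stated hypotheses, $M$ is a Calder\'on--Zygmund operator and then to invoke the classical theory of weighted norm inequalities for such operators. Writing $K(x,y)$ for the kernel of $M$, I would establish the size bound $|K(x,y)|\leq C|x-y|^{-n}$ together with the regularity estimates
$$|K(x,y)-K(x',y)|\leq C\frac{|x-x'|}{|x-y|^{n+1}},\qquad |K(x,y)-K(x,y')|\leq C\frac{|y-y'|}{|x-y|^{n+1}}$$
away from the diagonal. These are exactly the estimates that, as in the proof of the unweighted $C^{n+1}$ result above asserting weak type $(1,1)$, follow from the class $C^{n+1}$ hypothesis; since $n+1$ non-commutative derivatives are available one obtains regularity in \emph{both} variables at once, which is why no separate hypothesis on $M^*$ is needed here. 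Once $M$ is known to be a Calder\'on--Zygmund operator bounded on $L^2(\R^n)$, the weighted inequality for every $w\in A_p$ and $1<p<\infty$ follows from Coifman--Fefferman theory.

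To produce the kernel estimates I would decompose $M=\sum_N M\chi_N$, so that $K=\sum_N K_N$ with $K_N$ the kernel of $M\chi_N$. The hypothesis controls $\sup_x\int|((\delta^\alpha\bar\delta^\beta M)\chi_N)(x,y)|^2\,dy$, and an integration by parts gives the identities
$$\tfrac12(\delta_j-\bar\delta_j)M\ \text{has kernel}\ (y_j-x_j)K,\qquad -\tfrac12(\delta_j+\bar\delta_j)M\ \text{has kernel}\ (\partial_{x_j}+\partial_{y_j})K,$$
so that these bounds control the $L^2_y$-norms of $(x-y)^\gamma K_N$ and of the diagonal derivatives $(\partial_x+\partial_y)^\sigma K_N$. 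Using the natural length scale $2^{-N/2}$ of the $N$-th block, the factors $(x-y)^\gamma$ furnish off-diagonal decay while Sobolev embedding in the $y$-variable converts the remaining $L^2$ bounds into pointwise bounds on $K_N$ and on its relevant differences; summing the resulting geometric series over $N$ then yields the size and regularity estimates for $K$. The number $n+1$ is calibrated exactly so that this summation converges and so that regularity in each variable separately is reached.

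For the weighted step I would run the Coifman--Fefferman good-$\lambda$ argument through the Fefferman--Stein sharp maximal function $M^\#$. The regularity of $K$ in the first variable gives the pointwise estimate $M^\#(Mf)(x)\leq C\,M_{HL}f(x)$, where $M_{HL}$ is the Hardy--Littlewood maximal operator. Combining this with the Fefferman--Stein inequality $\|g\|_{L^p(w)}\leq C\|M^\# g\|_{L^p(w)}$, valid for $w\in A_\infty$, and with the boundedness of $M_{HL}$ on $L^p(w)$ for $w\in A_p$, yields $\|Mf\|_{L^p(w)}\leq C\|f\|_{L^p(w)}$ for all $w\in A_p$ and $1<p<\infty$, which is the assertion.

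The main obstacle I anticipate lies in the second paragraph. The non-commutative derivatives only deliver the diagonal derivative $(\partial_x+\partial_y)$ together with multiplication by $(x-y)$, rather than the separate derivatives $\partial_x$ and $\partial_y$ that enter the Hörmander conditions. Extracting genuine one-variable regularity therefore requires combining the off-diagonal decay coming from the $(x-y)$ factors with the diagonal smoothness, while tracking the dependence on the block scale $2^{-N/2}$ so that the sum over $N$ converges; this is precisely where the full complement of $n+1$ derivatives, as opposed to $[\frac{n}{2}]+1$, is consumed, and it is the reason the present theorem reaches the whole range $1<p<\infty$ with $A_p$ weights rather than only $p>2$ with $A_{p/2}$ weights.
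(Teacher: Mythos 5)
Your overall strategy---Calder\'on--Zygmund kernel estimates followed by a sharp-maximal-function argument---is the same as the paper's, but the step you yourself flag as ``the main obstacle'' is a genuine gap, and the resolution you sketch does not work. Off-diagonal decay (control of $(x-y)^\gamma K$) together with diagonal smoothness (control of $(\partial_x+\partial_y)^\sigma K$) does not imply regularity in each variable separately, no matter how much Sobolev embedding in $y$ you apply: the kernel $K(x,y)=e^{i\lambda\cdot(x-y)}\kappa(x-y)$ satisfies $(\partial_x+\partial_y)K=0$ and has whatever off-diagonal decay $\kappa$ provides, yet $\nabla_yK$ is of size $|\lambda|$. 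So the quantities your hypotheses control cannot, by themselves, yield the H\"{o}rmander conditions. The paper's mechanism is structurally different: it decomposes $M=\sum_j M_j$ with $M_j=MS_j$, $S_j=\varphi_j(H)$ a difference of heat operators (not $M=\sum_N M\chi_N$), obtains pointwise bounds by Cauchy--Schwarz on the composition $(\delta^\mu\bar{\delta}^\nu M)\chi_N\cdot\chi_N(\delta^\gamma\bar{\delta}^\rho S_j)$ with no Sobolev embedding, and---crucially---gets the $y$-regularity by writing $\partial_{y_i}$ on the kernel of $M_j$ as $\tfrac{1}{2}M_j(A_i-A_i^*)$ and then $M_jA_i=M(S_jA_i)$, so that the one-variable derivative falls on the \emph{explicit} operator $S_jA_i$, whose blocks $\chi_N\delta^\gamma\bar{\delta}^\rho(S_jA_i)$ are weighted shifts estimated by hand (estimate (4.5)). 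The $x$-regularity uses $A_iM_j=\delta_iM_j+M_jA_i$ with $\delta_iM_j=(\delta_iM)S_j+M(\delta_iS_j)$, which is where the class-$C^{n+1}$ hypothesis enters, and it only holds for $|x-y|\leq K$, which suffices. Without some such device for transferring one-variable derivatives onto an explicit factor, your second paragraph cannot be completed.

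Two further points, both fixable but worth recording. First, with $n+1$ derivatives you do not get the Lipschitz regularity $|y-y'|\,|x-y|^{-n-1}$ you claim: the gradient bound for each piece is $\sup|x-y|^{n+\frac{1}{2}}|\nabla_yM_j(x,y)|\leq Ct_{j+1}^{-1/4}$, which diverges when summed over $j$; one must take for each $j$ the minimum of this and the size bound of Corollary 4.2, and after summation this yields only the H\"{o}lder-type condition $|x-z|^{n+\frac{1}{2}}|K_N(x,y)-K_N(x,z)|\leq C|y-z|^{1/2}$ (uniformly in $N$), which is still ample for Calder\'on--Zygmund theory. Second, the pointwise bound $\Lambda^\sharp(Mf)\leq C\Lambda f$ is not available: the local term $\frac{1}{|Q|}\int_Q|T_Nf_1|$ requires $L^s$ boundedness of $T_N$ for the exponent $s$ appearing on the right, so the correct statement (the paper's Theorem 4.5) is $\Lambda^\sharp(\sum_{j\leq N}M_jf)\leq C(\Lambda_sf+\Lambda(\Lambda f))$ for $1<s<p$, with the $L^s$ boundedness supplied by Theorem 1.7. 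Taking $s$ close to $1$ still reaches every $w\in A_p$, $1<p<\infty$, since any $A_p$ weight lies in $A_{p/s}$ for some $s>1$; and one must also verify the a priori hypothesis of the Fefferman--Stein lemma on a dense class, which is why the paper works with the truncations $T_N$ throughout and passes to the limit only at the end.
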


We remark that Corollary 1.5 also has a version under the stronger hypothesis used in the above theorem.
The plan of the paper is as follows. In Section 2, we prove good
estimates on the kernel $M$ which are then used in Section 3 to
prove the weighted norm inequalities stated in Theorems 1.8 and
1.9. We prove  Theorems 1.6 and 1.7  in Section 4 and finally, in Section 5, we apply these results and techniques
used in their proofs to prove our results on Hermite
pseudo-multipliers.\\

\section[Estimating the kernels]{Estimating the kernels}
In an earlier paper \cite{BT} we studied weighted norm
inequalities for Weyl multipliers by modifying the methods used by
Mauceri in \cite{M}. Some ideas from \cite{BT} and \cite{M} will
be used in this paper. As in \cite{BT} we let $t_j=2^{-j}, j= 1,
2, \cdots $ and consider
$$S_j=\sum_{k=0}^\infty (e^{-(2k+n) t_{j+1}}- e^{-(2k+n) t_j})P_k = \varphi_j(H)$$
where $ \varphi_j(x) = (e^{-t_{j+1}x}-e^{-t_jx}).$ Then it follows that $\sum^N_{j=1}S_j=
e^{-t_{N+1}H}-e^{-t_1H}$ and taking limit as $N\rightarrow
\infty $ we get $I=\sum^\infty_{j=0} S_j $ with $ S_0 = e^{-t_1H}$. Using
this we decompose our operator $M$ as
$$M=\sum_{j=0}^\infty M_j,\;\;\; M_j=MS_j, j =0,1,2,\ldots .$$
In order to prove good estimates for the kernels of $M_j$ we
require the following proposition which is a variant of a result
found in Mauceri \cite{M}.
\begin{prop}
For any multi-indices $\gamma$ and $\rho$ we have
\be||\chi_N(\delta^\gamma \bar{\delta}^\rho S_j)||_{op}\leq C~
(2^N t_{j+1})2^{-N(|\gamma|+|\rho|)/2} f_{\gamma, \rho}(2^Nt_{j+1});\ee
\be\sup_{x\in \R^n}\int_{\R^n}|(\chi_N \delta^\gamma
\bar{\delta}^\rho S_j)(x,y)|^2 dy\leq C~
(2^N t_{j+1})^2 2^{N(\frac{n}{2}-|\gamma|-|\rho|)}f_{\gamma, \rho}(2^N t_{j+1})\ee
where $f_{\gamma, \rho}$ is a rapidly decreasing function.
\end{prop}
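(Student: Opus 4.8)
The plan is to reduce the whole proposition to a single model computation on one dyadic block, exploiting the fact that $\delta_j$ and $\bar\delta_j$ turn a spectral multiplier of $H$ into finite differences of that multiplier, dressed with the annihilation and creation operators $A_j,A_j^*$. Since $S_j=\varphi_j(H)$ is a function of $H$, and $A_j$ lowers while $A_j^*$ raises the Hermite level by one, evaluating on each eigenspace $P_k$ gives
\[\delta_j \varphi_j(H)=A_j\big(\varphi_j(H-2)-\varphi_j(H)\big),\qquad \bar\delta_j\varphi_j(H)=A_j^*\big(\varphi_j(H)-\varphi_j(H+2)\big),\]
so each first order derivative produces one factor $A_j$ or $A_j^*$ together with one finite difference of $\varphi_j$ in the spectral variable. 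Iterating this via the Leibniz rule for the derivations $\delta_i,\bar\delta_i$, together with $[A_i,A_j]=0$ and $[A_i,A_j^*]=2\delta_{ij}$, I would expand $\delta^\gamma\bar\delta^\rho S_j$ as a finite sum of model terms $c\,A^a(A^*)^b(\Delta^d\varphi_j)(H)$. The essential bookkeeping, proved by induction on $|\gamma|+|\rho|$, is that every correction term coming from $[A_i,A_j^*]=2\delta_{ij}$ lowers the number of creation/annihilation factors by $2$ and the order of the difference by $1$, so that each term satisfies the invariant $2d-(|a|+|b|)=|\gamma|+|\rho|$, with $d\ge 1$ whenever $|\gamma|+|\rho|\ge 1$. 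This reduces both assertions to estimating one term $\chi_N A^a(A^*)^b(\Delta^d\varphi_j)(H)$.

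Two elementary block estimates then do the work. Because $\varphi_j(x)=e^{-t_{j+1}x}-e^{-t_jx}$ and $\Delta^d$ acts on an exponential by $\Delta^d e^{-t(2k+n)}=e^{-t(2k+n)}(e^{-2t}-1)^d$, on the block where $2k+n\approx 2^N$ one finds, writing $s=2^Nt_{j+1}$ and using $t_j=2t_{j+1}$, that $|(\Delta^d\varphi_j)(2k+n)|\le C\,2^{-Nd}s^{\max(d,1)}f_{\gamma,\rho}(s)$ with $f_{\gamma,\rho}$ rapidly decreasing; here the exponential supplies the rapid decay in $s$, while the first order vanishing of $\varphi_j$ at $t=0$ supplies the factor $s$ in the case $d=0$. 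On the other hand $A_j,A_j^*$ act on the $k$-th level like $\sqrt{2k+n}\approx 2^{N/2}$, whence $\|\chi_N A^a(A^*)^b\|_{op}\le C\,2^{N(|a|+|b|)/2}$; since these operators shift the level by at most $|\gamma|+|\rho|$, every spectral value occurring stays comparable to $2^N$, which justifies replacing $(2k+n)$ by $2^N$ throughout.

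Assembling these, the operator-norm bound follows by multiplication, $\|\chi_N A^a(A^*)^b(\Delta^d\varphi_j)(H)\|_{op}\le C\,2^{N(|a|+|b|)/2}2^{-Nd}s^{\max(d,1)}f_{\gamma,\rho}(s)$, and the invariant $|a|+|b|-2d=-(|\gamma|+|\rho|)$ together with the factorization $s^{\max(d,1)}=s\cdot s^{\max(d,1)-1}$ produces exactly the first asserted inequality. For the kernel bound I would write $\int_{\R^n}|(\chi_N T)(x,y)|^2\,dy=(\chi_N TT^*\chi_N)(x,x)$, expand the diagonal as $\sum_{2^{N-1}\le 2k+n<2^N}|(\Delta^d\varphi_j)(2k+n)|^2(2k+n)^{|a|+|b|}\Phi_k(x,x)$, and insert the pointwise bound $\Phi_k(x,x)\le C(2k+n)^{n/2-1}$ (Lemma 3.2.2 in \cite{T}); summing over the $\approx 2^N$ levels and using the same invariant converts the power of $2^N$ into $2^{N(n/2-|\gamma|-|\rho|)}$ and leaves the factor $(2^Nt_{j+1})^2$ times a rapidly decreasing function of $s$, which is the second asserted inequality.

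The genuinely delicate point is the combinatorics of the first step. The leading term is transparent, but one must track the correction terms generated by the noncommutativity $[A_i,A_j^*]=2\delta_{ij}$ and verify that they preserve, rather than degrade, the invariant $2d-(|a|+|b|)=|\gamma|+|\rho|$; only this guarantees that the naive ``one creation/annihilation factor and one difference per derivative'' heuristic yields the stated exponents on $2^N$. I expect this inductive bookkeeping, which follows Mauceri \cite{M} and our earlier paper \cite{BT}, to be the main obstacle, the surrounding block estimates being routine.
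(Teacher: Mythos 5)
Your proposal follows essentially the same route as the paper: the paper invokes Mauceri's lemma (Lemma 2.2) to write $\delta^\gamma\bar\delta^\rho S_j$ as a sum of normal-ordered terms $(A^*)^{\rho+\alpha-\gamma}A^\alpha(D_-^{|\alpha|}D_+^{|\rho|}\varphi_j)(H)$ over $\alpha\le\gamma\le\rho+\alpha$ — exactly your model terms, with $a=\alpha$, $b=\rho+\alpha-\gamma$, $d=|\alpha|+|\rho|$ satisfying your relation $2d-(|a|+|b|)=|\gamma|+|\rho|$ — and then runs the same weighted-shift and $\Phi_k(x,x)\le C(2k+n)^{n/2-1}$ estimates you describe. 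The one bookkeeping caveat in your inductive rederivation of the expansion is that the corrections generated by normal-ordering after a derivative hits $\varphi_j(H)$ actually satisfy $2d-(|a|+|b|)\ge|\gamma|+|\rho|$ rather than exact equality (they keep $d$ and drop $|a|+|b|$ by $2$), but since a larger value of this quantity only yields extra factors of $2^{-N/2}$, all such terms obey the stated bounds a fortiori and the argument goes through.
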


In order to prove the above proposition we make use of the following
Lemma proved in Mauceri \cite{M} (see Lemma 2.1). Let $ D_- $ and
$ D_+ $ be the backward and forward finite difference operators
defined by
$$ D_-\phi(k) = \phi(k)-\varphi(k-2),~~~~ D_+\phi(k) =\phi(k+2)-\phi(k).$$ Higher order finite differences $D_+^r$ and $D_-^s$ are
defined recursively. With these notations we have
\begin{lem}
Given multi-indices $\gamma$ and $\rho$ there exist constants
$C_{\gamma, \rho, \alpha}$ such that
$$\delta^\gamma \bar{\delta}^\rho \phi(H)=\sum C_{\gamma, \rho, \alpha}(A^*)^{\rho+\alpha-\gamma} A^\alpha (D^{|\alpha|}_-D_+^{|\rho|}\phi)(H)$$
where the sum is taken over all multi-indices $\alpha$
satisfying $\alpha\leq \gamma\leq \rho+\alpha$.
\end{lem}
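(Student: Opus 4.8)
The plan is to argue by induction on the total order $|\gamma|+|\rho|$, regarding $\delta_j$ and $\bar{\delta}_j$ as inner derivations and keeping careful track of how the operators $A_j$ and $A_j^*$ pass through functions of $H$. All the identities below are first verified on the dense linear span of the Hermite basis $\{\Phi_\alpha\}$, on which every operator acts by a finite sum, and then extended by density. Writing $\tau_\pm$ for the translation $\tau_\pm\psi(\lambda)=\psi(\lambda\pm2)$, so that $D_-=I-\tau_-$ and $D_+=\tau_+-I$, the eigen-relations $A_j\Phi_\alpha=(2\alpha_j)^{1/2}\Phi_{\alpha-e_j}$ and $A_j^*\Phi_\alpha=(2\alpha_j+2)^{1/2}\Phi_{\alpha+e_j}$ give at once the intertwining formulas $A_j\psi(H)=(\tau_+\psi)(H)A_j$ and $A_j^*\psi(H)=(\tau_-\psi)(H)A_j^*$, together with the one-step difference formulas $\delta_j\psi(H)=-A_j(D_-\psi)(H)$ and $\bar{\delta}_j\psi(H)=-A_j^*(D_+\psi)(H)$; these are precisely the cases $|\gamma|+|\rho|=1$ of the claim, obtained by applying one derivation to the trivial order-zero identity $\phi(H)=\phi(H)$.

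Before inducting I would record that all the derivations commute: $\delta_i\delta_j=\delta_j\delta_i$, $\bar{\delta}_i\bar{\delta}_j=\bar{\delta}_j\bar{\delta}_i$ and $\delta_i\bar{\delta}_j=\bar{\delta}_j\delta_i$. Each follows from the Jacobi identity once one uses that $[A_i,A_j^*]=2\delta_{ij}$ is a scalar, hence central. Consequently $\delta^\gamma\bar{\delta}^\rho$ is well defined independently of the order of the factors, and it suffices to show that applying a single $\delta_i$ or $\bar{\delta}_i$ to a sum of the stated form again produces a sum of the stated form, with $\gamma$ (respectively $\rho$) raised by $e_i$.

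For the inductive step I apply the relevant derivation to a generic summand $(A^*)^\sigma A^\alpha\psi(H)$, where $\sigma=\rho+\alpha-\gamma$ and $\psi=D_-^{|\alpha|}D_+^{|\rho|}\phi$, using the Leibniz rule over the three factors. On the building blocks one has $\delta_i(A^*)^\sigma=-2\sigma_i(A^*)^{\sigma-e_i}$, $\bar{\delta}_i(A^*)^\sigma=0$, $\delta_iA^\alpha=0$, $\bar{\delta}_iA^\alpha=-2\alpha_iA^{\alpha-e_i}$, the derivatives of $\psi(H)$ being given by the one-step formulas; whenever a new factor $A_i^*$ is produced I move it to the far left past $A^\alpha$ using $[A^\alpha,A_i^*]=2\alpha_iA^{\alpha-e_i}$. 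For $\delta_i$ the two resulting summands are already canonical, with new parameters $\alpha'=\alpha$ and $\alpha'=\alpha+e_i$. For $\bar{\delta}_i$ the summand with $\alpha'=\alpha$ is immediate, while the two pieces carrying $A^{\alpha-e_i}$ (one from $\bar{\delta}_iA^\alpha$, one from the commutator) must be added; here the decisive simplification is $\psi+D_+\psi=\tau_+\psi=D_-^{|\alpha|-1}D_+^{|\rho|+1}\phi$, which trades a factor $D_-$ for a factor $D_+$ and produces exactly the canonical multiplier attached to $\alpha'=\alpha-e_i$. A short check shows the constraints $\alpha'\le\gamma'\le\rho'+\alpha'$ persist: the only way $\delta_i$ could push $\gamma'$ above $\rho+\alpha'$ is when $\sigma_i=0$, and then the offending coefficient $-2\sigma_i$ vanishes.

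The step I expect to be the main obstacle is exactly this last bookkeeping, namely reconciling the translations $\tau_\pm$, which appear every time $A_j$ or $A_j^*$ is commuted past a function of $H$, with the finite-difference operators $D_\pm$ in the statement. The relations $\tau_+D_-=D_+$, $\tau_-D_+=D_-$ and $I+D_+=\tau_+$ are what force the asymmetric exponents $D_-^{|\alpha|}D_+^{|\rho|}$ and make the clean form of the lemma emerge; keeping the signs and the directions of the shifts straight throughout the Leibniz expansion is the delicate point, the remainder being routine multi-index algebra.
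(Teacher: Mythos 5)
Your proof is correct. Note that the paper itself gives no proof of this lemma --- it is quoted verbatim from Mauceri's paper (Lemma 2.1 there), so there is no in-paper argument to compare against; your induction on $|\gamma|+|\rho|$ is the natural self-contained route and supplies what the paper only cites. The key steps all check out: the intertwining relations $A_j\psi(H)=(\tau_+\psi)(H)A_j$ and $A_j^*\psi(H)=(\tau_-\psi)(H)A_j^*$ follow from the ladder action on the Hermite basis and yield the correct first-order formulas $\delta_j\psi(H)=-A_j(D_-\psi)(H)$, $\bar{\delta}_j\psi(H)=-A_j^*(D_+\psi)(H)$, matching the statement's constraint set ($\alpha=e_j$, resp.\ $\alpha=0$); commutativity of the derivations, which you need for $\delta^\gamma\bar{\delta}^\rho$ to be unambiguous, does follow from Jacobi since $[A_i,A_j^*]=2\delta_{ij}$ is central; and in the inductive step the recombination $\psi+D_+\psi=\tau_+\psi$ together with $\tau_+D_-=D_+$ correctly merges the two $A^{\alpha-e_i}$ pieces (from $\bar{\delta}_iA^\alpha$ and from commuting the new $A_i^*$ leftward) into the canonical multiplier $D_-^{|\alpha|-1}D_+^{|\rho|+1}\phi$ attached to $\alpha'=\alpha-e_i$. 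Your bookkeeping of the constraints is also right, including the only delicate case, where $\delta_i$ would violate $\gamma'\le\rho'+\alpha'$ precisely when $\sigma_i=0$ and the coefficient $-2\sigma_i$ then vanishes; since the lemma permits arbitrary constants $C_{\gamma,\rho,\alpha}$, all signs and combinatorial factors are absorbed, and verifying the identity on the dense span of Hermite functions is legitimate because every operator involved maps each $\Phi_\mu$ to a finite linear combination.
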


Using this lemma we can now prove Proposition 2.1. Since
$S_j=\phi_j(H)$, $\phi_j(x)=(e^{-t_{j+1}x}- e^{-t_jx})$ we have

$$\chi_N(\delta^\gamma \bar{\delta}^\rho S_j)=
\sum_{\alpha\leq \gamma\leq
\rho+\alpha}C_{\gamma,\rho.\alpha} \chi_N (A^*)^{\rho+\alpha-\gamma}A^\gamma(D^{|\alpha|}_-
D^{|\rho|}_+ \phi_j)(H).$$ It is enough to estimate the operator
norm of $ \chi_N(A^*)^{\rho+\alpha-\gamma}A^\alpha D_-^{|\alpha|}
D_+^{|\rho|}\phi_j(H)$ for each $\alpha$, $\alpha\leq \gamma \leq
\rho+\alpha$. Since $A_j^*
\Phi_\mu=(2\mu_j+2)^{\frac{1}{2}}\Phi_{\mu+e_j}$ and
$A_j\Phi_\mu=(2\mu_j)^{\frac{1}{2}}\Phi_{\mu-e_j}$, it is clear
that the above operator is a weighted shift operator and it is
enough to estimate
$$\sup_{|\mu|\sim 2^N}||(A^*)^{\rho+\alpha-\gamma} A^{\alpha} D_-^{|\alpha|} D^{|\rho|}_+ \phi_j(H)\Phi_{\mu}||_2.$$
The above is clearly bounded by a constant times
$$ 2^{\frac{N}{2}(|\rho|+2|\alpha|-|\gamma|)}
 |D^{|\alpha|}_-D^{|\rho|}_+\phi_j(2|\mu|+n)| $$
where $|\mu|\sim 2^N$. Let $m=|\alpha|+|\rho|$. Estimating the
finite differences in terms of derivatives we have
$$|D^{|\alpha|}_-D^{|\rho|}_+ \phi_j(2|\mu|+n)|\leq C~|\phi_j^{(m)}(2|\mu|+n)|.$$
Recalling that $\phi_j(x)=e^{-t_{j+1}x}-e^{-t_jx}$, we see that
$$(-1)^m \phi_j^{(m)}(x)=t_{j+1}^m e^{-t_{j+1} x}-t_j^m e^{-t_j x}$$
which we rewrite as (since $t_j=2 t_{j+1}$)
$$(-1)^m \phi_j^{(m)}(x)=t_{j+1}^m (e^{-t_{j+1}x}-e^{-2t_{j+1}x})+t_{j+1}^m(1-2^m)e^{-t_{j}x}$$
By mean value theorem \Bea
|e^{-t_{j+1}x}-e^{-2t_{j+1}x}|&=&|e^{-t_{j+1}x}(1-e^{-t_{j+1}x})|\\
&\leq& ~ xt_{j+1} e^{-t_{j+1}x}.\Eea Since $|\mu|\sim 2^N$ we get
the estimate
$$t_{j+1}^m|e^{-t_{j+1}(2|\mu|+n)}-e^{-2t_{j+1}(2|\mu|+n)}|$$
$$\leq C~ t_{j+1} 2^N  2^{-Nm}(2^N t_{j+1})^m e^{-c t_{j+1}2^N}$$ for some $ c > 0.$
Similarly, the other term gives the estimate
$$t_{j+1}^m e^{- t_{j+1}x}\leq C~ t_{j+1} 2^N  2^{-Nm}(2^N t_{j+1})^{m-1}e^{-c 2^N t_{j+1}}.$$
Thus we have proved
$$|D^{|\alpha|}_-D^{|\rho|}_+ \phi_j(2|\mu|+n)|\leq C~ 2^N t_{j+1} 2^{-N(|\alpha|+|\rho|)}g_{\alpha,\rho}(2^N t_{j+1})$$
with $g_{\alpha, \rho}(x)=x^m e^{-cx}+x^{m-1}e^{-cx}$, for some
$c>0.$ Putting together the typical term is bounded from above by
$$C~ 2^{N/2(|\rho|+2|\alpha|-|\gamma|)}2^N t_{j+1} 2^{-N(|\alpha|+|\rho|)}g_{\alpha, \rho}(2^N t_{j+1}).$$
which is the required one. As this is true for any $\alpha\leq
\gamma\leq \rho+\alpha, $ part (1) of the proposition is proved. Note that the function $ f_{\gamma,\rho} $ appearing in the proposition is the sum of
$ g_{\alpha, \rho},$ the sum being extended over all $ \alpha $ satisfying $\alpha\leq
\gamma\leq \rho+\alpha.$ Consequently, $ f_{\gamma,\rho} $ has exponential decay a fact which will be used later.

The proof of the second part of the proposition is similar. Again,
by the lemma of Mauceri we only need to estimate the kernels of
$$\chi_N (A^*)^{\rho+\alpha-\gamma}A^\alpha D^{|\alpha|}_-D^{|\rho|}_+\phi_j(H).$$
Since the kernel of $(A^*)^{\rho+\alpha-\gamma}A^\alpha
D^{|\alpha|}_- D_+^{|\rho|}\phi_j(H)$ is
$$\sum_{\mu}(D_-^{|\alpha|}D_+^{|\rho|}\phi_j)(2|\mu|+n)(A^*)^{\rho+\alpha-\gamma}A^{\alpha} \Phi_{\mu}(x) \Phi_{\mu}(y)$$
we are required to estimate $$\sum_{|\mu|\sim
2^N}|D^{|\alpha|}_-D^{|\rho|}_+ \phi_j(2|\mu|+n)|^2
|(A^*)^{\rho+\alpha-\gamma}A^{\alpha}\Phi_{\mu}(x)|^2.$$
Proceeding as before and using the fact that (see Lemma 3.2.2 in \cite{T})
$$\sum_{|\mu|=k}\Phi_{\mu}(x)^2\leq C~ (2k+n)^{\frac{n}{2}-1},$$
the above is bounded by
$$C~ t_{j+1}^2 2^{2N} 2^{N(\frac{n}{2}-|\gamma|-|\rho|)}g_{\alpha, \rho}(2^N t_{j+1})^2.$$
This completes the proof of the proposition.\\

Using the results of the above proposition we get the following
estimate on the kernel of $M_j$.
\begin{prop}
Let $ M $ satisfy the hypothesis stated in Theorem 1.8. Then for
all $l\in \N, l\leq [\frac{n}{2}]+1$ we have
$$\sup_{x\in\R^n} \int_{\R^n}|x-y|^{2l}|M_j(x,y)|^2dy\leq C~ t_{j+1}^{l-\frac{n}{2}}$$

\end{prop}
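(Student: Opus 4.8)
The plan is to reduce the weighted estimate to a single coordinate and then to convert multiplication of the kernel by $(x-y)$ into the non-commutative derivatives $\delta,\bar\delta$, so that the hypothesis of Theorem 1.8 (on $M$) and Proposition 2.1 (on $S_j$) can be combined. First, since $|x-y|^{2l}\leq n^l\sum_{k=1}^n(x_k-y_k)^{2l}$, it suffices to bound $\sup_x\int_{\R^n}(x_k-y_k)^{2l}|M_j(x,y)|^2\,dy$ for each fixed $k$. The key observation I would record is the kernel identity: writing $\delta_kT=[T,A_k]$ and $\bar\delta_kT=-[T,A_k^*]$ and integrating by parts in the $y$-variable, one checks that the kernel of $\frac12(\bar\delta_k-\delta_k)T$ is exactly $(x_k-y_k)T(x,y)$. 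Iterating, $(x_k-y_k)^lM_j(x,y)$ is the kernel of $\bigl(\frac12(\bar\delta_k-\delta_k)\bigr)^lM_j$; since $[A_k,A_k^*]=2$ forces $\delta_k$ and $\bar\delta_k$ to commute, I can expand this power by the binomial theorem into a sum of $\delta_k^{p}\bar\delta_k^{q}M_j$ with $p+q=l$. The decisive point — and the reason that $l\leq[\frac{n}{2}]+1$ derivatives suffice rather than $2l$ — is to write $(x_k-y_k)^{2l}|M_j|^2=\bigl|(x_k-y_k)^lM_j\bigr|^2$, so that only $l$ derivations ever act on the kernel.

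Next I would apply the Leibniz rule (each $\delta_k,\bar\delta_k$ is a derivation, being a commutator) to $M_j=MS_j$,
\[
\delta_k^{p}\bar\delta_k^{q}(MS_j)=\sum_{a+a'=p,\;b+b'=q}c_{a,b}\,(\delta_k^{a}\bar\delta_k^{b}M)(\delta_k^{a'}\bar\delta_k^{b'}S_j),
\]
so that the number $a+b$ of derivatives landing on $M$ never exceeds $p+q=l\leq[\frac{n}{2}]+1$, precisely the range controlled by Theorem 1.8, while the remaining derivatives fall on $S_j$, where Proposition 2.1 applies with no restriction on the order. For each term I would insert the resolution of the identity $\sum_N\chi_N=I$ between the two factors and use $\chi_N^2=\chi_N$ to write $(\delta_k^{a}\bar\delta_k^{b}M)(\delta_k^{a'}\bar\delta_k^{b'}S_j)=\sum_N U_NW_N$, where $U_N=(\delta_k^{a}\bar\delta_k^{b}M)\chi_N$ and $W_N=\chi_N(\delta_k^{a'}\bar\delta_k^{b'}S_j)$ are exactly the objects estimated in Theorem 1.8 and Proposition 2.1 respectively.

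A Cauchy--Schwarz/operator-norm argument then controls each piece: for fixed $x$, setting $\phi(z)=\overline{U_N(x,z)}$ one has $\overline{(U_NW_N)(x,y)}=(W_N^*\phi)(y)$, whence the $L^2_y$-norm of the row $(U_NW_N)(x,\cdot)$ is at most $\|W_N^*\|_{op}\|\phi\|_2=\|W_N\|_{op}\|U_N(x,\cdot)\|_2$; that is,
\[
\sup_x\int_{\R^n}|(U_NW_N)(x,y)|^2\,dy\leq \|W_N\|_{op}^2\,\sup_x\int_{\R^n}|U_N(x,z)|^2\,dz.
\]
The hypothesis of Theorem 1.8 bounds $\sup_x\int|U_N(x,z)|^2\,dz$ by $C\,2^{N(\frac{n}{2}-a-b)}$, while part (1) of Proposition 2.1 bounds $\|W_N\|_{op}^2$ by $C(2^Nt_{j+1})^2 2^{-N(a'+b')}f_{a',b'}(2^Nt_{j+1})^2$. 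Multiplying and using $(a+b)+(a'+b')=l$, the powers of $2^N$ combine to $2^{N(\frac{n}{2}-l)}$, giving the term estimate $C(2^Nt_{j+1})^2 2^{N(\frac{n}{2}-l)}f_{a',b'}(2^Nt_{j+1})^2$.

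Finally I would sum in $N$ by Minkowski's inequality in $L^2_y$. Writing $u_N=2^Nt_{j+1}$ and $2^N=u_Nt_{j+1}^{-1}$, each square-rooted term equals $t_{j+1}^{(l-n/2)/2}\,u_N^{\,1+(n/2-l)/2}f_{a',b'}(u_N)$, so
\[
\Bigl(\sup_x\int_{\R^n}(x_k-y_k)^{2l}|M_j(x,y)|^2\,dy\Bigr)^{1/2}\leq C\,t_{j+1}^{(l-n/2)/2}\sum_N u_N^{\,1+(n/2-l)/2}f_{a',b'}(u_N).
\]
Because $l\leq[\frac{n}{2}]+1$ makes the exponent $1+(n/2-l)/2$ strictly positive and $f_{a',b'}$ is rapidly decreasing, the dyadic sum $\sum_N u_N^{s}f_{a',b'}(u_N)$ is bounded uniformly in $j$; squaring yields the asserted $C\,t_{j+1}^{\,l-\frac{n}{2}}$. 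The main obstacle is precisely the bookkeeping that confines the number of derivatives on $M$ to $[\frac{n}{2}]+1$: the device of writing $(x_k-y_k)^{2l}=\bigl|(x_k-y_k)^l\bigr|^2$, so that only $l$ derivations act on the kernel, is what makes the restricted hypothesis of Theorem 1.8 sufficient, and verifying the kernel correspondence $(x_k-y_k)\leftrightarrow\frac12(\bar\delta_k-\delta_k)$ together with the commutativity underlying the binomial expansion is the technical heart of the argument.
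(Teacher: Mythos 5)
Your proposal is correct and follows essentially the same route as the paper: convert $(x-y)$ into the commutator $[x_k,M_j]=\tfrac12(\bar\delta_k-\delta_k)M_j$ so that only $l$ non-commutative derivatives act on the kernel, apply Leibniz to $M_j=MS_j$, insert $\sum_N\chi_N$ between the two factors, bound each piece by $\|\chi_N(\delta^{\gamma}\bar\delta^{\rho}S_j)\|_{op}$ times the $L^2_y$-norm from the hypothesis on $M$, and sum the resulting dyadic series using the rapid decay of $f_{\gamma,\rho}$ and the positivity of the exponent $(\tfrac n2+2-l)/2$. The only differences are cosmetic (reduction to single coordinates and the explicit verification that $\delta_k$ and $\bar\delta_k$ commute), so there is nothing to correct.
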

\begin{proof}
First observe that $(x_i-y_i)M_j(x,y)$ is the kernel of the
commutator $[x_i, M_j]$ (where $x_i$ stands for the operator of
multiplication by $x_i$). Since $x_i=\frac{1}{2}(A_i+A^*_i)$ it
follows that $[x_i, M_j]$ can be expressed in terms of the
derivatives $\delta_i M_j$ and $\bar{\delta}_i M_j$. Therefore, in
order to prove the proposition , it is enough to estimate the
kernels of $\delta^\alpha \bar{\delta}^\beta M_j$ for all $\alpha,
\beta \in \N^n$ with $|\alpha|+|\beta|=l$.

By Leibnitz formula for the non-commutative derivatives
$\delta^\alpha \bar{\delta}^\beta M_j$ is a finite sum of terms of
the form $(\delta^\mu \bar{\delta}^\nu M)(\delta^\gamma
\bar{\delta}^\rho S_j)$ with $|\mu|+|\nu|+|\gamma|+|\rho|=l$ and
hence it is enough to estimate the kernel of each of the above
operators. We take one such term and split it as
$$(\delta^\mu \bar{\delta}^\nu M)(\delta^\gamma
\bar{\delta}^\rho S_j)=\sum_{N=0}^\infty (\delta^\mu
\bar{\delta}^\nu M)\chi_N. \chi_N(\delta^\gamma \bar{\delta}^\rho
S_j).$$ Thus the $L^2$ norm of the kernel (in $y$-variable) of the
left hand side is dominated by the infinite sum of $L^2$-norms of
kernels of the operators in the sum. The kernel of $(\delta^\mu
\bar{\delta}^\nu M)\chi_N . \chi_N(\delta^\gamma \bar{\delta}^\rho
S_j)$ is given by \be \int_{\R^n}((\delta^\mu\bar{\delta}^\nu
M)\chi_N)(x,y') (\chi_N(\delta^\gamma \bar{\delta}^\rho S_j))(y',
y)dy'.\ee For $x\in \R^n$ fixed, if we let
$\phi_x(y')=((\delta^\mu \bar{\delta}^\nu M)\chi_N)(x, y')$ then
we can rewrite the above as
$$\int_{\R^n} (\chi_N(\delta^\gamma \bar{\delta}^\rho
S_j))^*(y, y')\phi_x(y')dy'= (\chi_N(\delta^\gamma
\bar{\delta}^\rho S_j))^*\phi_x(y)$$ where $(\chi_N(\delta^\gamma
\bar{\delta}^\rho S_j))^* $ is the adjoint of
$\chi_N(\delta^\gamma \bar{\delta}^\rho S_j)$. Hence the $ L^2 $ norm of the integral
(2.3) in the $ y $-variable is bounded by
$$|| (\chi_N(\delta^\gamma \bar{\delta}^\rho
S_j))^*||_{op}\left(\int_{\R^n}|((\delta^\mu \bar{\delta}^\nu
M)\chi_N)(x,y)|^2dy\right)^{\frac{1}{2}}.$$ Since $ \|T^*\| = \|T\| $ for any bounded linear operator, the hypothesis on $M$
and Proposition 2.1 lead to the estimate
$$C~ t_{j+1} 2^{(2-|\gamma|-|\rho|)N/2} f_{\gamma, \rho}(2^N t_{j+1}) 2^{(\frac{n}{2}-|\mu|-|\nu|)N/2}$$
$$=C~ t_{j+1} 2^{(\frac{n}{2}+2-l)N/2} f_{\gamma, \rho}(2^N t_{j+1}).$$
Thus the $L^2$ norm of the kernel of $(\delta^\mu\bar{\delta}^\nu
M)(\delta^\gamma \bar{\delta}^\rho S_j)$ is dominated by $$C~
t_{j+1}\sum_{N=0}^\infty 2^{N(\frac{n}{2} +2-l)/2} f_{\gamma,
\rho}(2^N t_{j+1})$$
$$\leq C~ t_{j+1}^{(l-\frac{n}{2})/2}\sum_{N=0}^\infty (2^N t_{j+1})^{(\frac{n}{2}+2-l)/2} f_{\gamma, \rho}(2^N t_{j+1}).$$
Since $l\leq [\frac{n}{2}]+1$ the series converges and hence we
obtain the required estimate.
\end{proof}
\begin{cor}
$$\int_{\R^n} |x-y|^{n+1}|M_j(x,y)|^2dy\leq C~
t_{j+1}^{\frac{1}{2}}.$$

\end{cor}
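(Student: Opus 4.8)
The plan is to read the corollary directly off Proposition 2.3, whose conclusion only concerns the \emph{even} powers $|x-y|^{2l}$ with $l$ an integer in the range $0\le l\le[\frac{n}{2}]+1$. The exponent $n+1$ required here is even precisely when $n$ is odd, so the two parities must be treated separately.

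When $n$ is odd, $l=\frac{n+1}{2}$ is an integer and in fact equals $[\frac{n}{2}]+1$, the largest admissible index in Proposition 2.3. Applying that proposition with this value of $l$ gives at once
$$\sup_{x\in\R^n}\int_{\R^n}|x-y|^{n+1}|M_j(x,y)|^2\,dy\le C\,t_{j+1}^{\frac{n+1}{2}-\frac{n}{2}}=C\,t_{j+1}^{\frac{1}{2}},$$
which is the assertion.

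When $n$ is even, $\frac{n+1}{2}$ is not an integer, so I would instead interpolate between the two neighbouring even powers $|x-y|^n$ and $|x-y|^{n+2}$, both of which are covered by Proposition 2.3 (the indices $l=\frac n2$ and $l=\frac n2+1=[\frac n2]+1$ lie in the admissible range). Writing $|x-y|^{n+1}=|x-y|^{\frac n2}\,|x-y|^{\frac{n+2}{2}}$ and applying the Cauchy--Schwarz inequality in the measure $|M_j(x,y)|^2\,dy$ yields
$$\int_{\R^n}|x-y|^{n+1}|M_j(x,y)|^2\,dy\le\Big(\int_{\R^n}|x-y|^{n}|M_j(x,y)|^2\,dy\Big)^{\frac12}\Big(\int_{\R^n}|x-y|^{n+2}|M_j(x,y)|^2\,dy\Big)^{\frac12}.$$
By Proposition 2.3 the first factor is bounded by $C\,t_{j+1}^{\frac n2-\frac n2}=C$ and the second by $C\,t_{j+1}^{(\frac n2+1)-\frac n2}=C\,t_{j+1}$, so their product is at most $C\,t_{j+1}^{\frac12}$, uniformly in $x$. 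This is again the desired bound.

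There is no serious obstacle here: the only point requiring care is that Proposition 2.3 supplies estimates solely for integer $l$, so in the even-$n$ case one cannot invoke it at the half-integer $\frac{n+1}{2}$ and must instead obtain the bound by the Cauchy--Schwarz interpolation between the adjacent integers $\frac n2$ and $\frac n2+1$. One checks that both of these lie within the permitted range $l\le[\frac n2]+1$, which is exactly why the hypothesis of class $C^{[\frac n2]+1}$ suffices.
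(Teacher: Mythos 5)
Your proof is correct and is essentially identical to the paper's own argument: for odd $n$ you apply Proposition 2.3 directly with $l=\frac{n+1}{2}$, and for even $n$ you interpolate between $l=\frac{n}{2}$ and $l=\frac{n}{2}+1$ via Cauchy--Schwarz, exactly as the authors do. No issues.
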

\begin{proof}
When $n$ is even by taking $l=\frac{n}{2}$ and $l=\frac{n}{2}+1$
in the above proposition we get the estimates
$$\int_{\R^n} |x-y|^n |M_j(x,y)|^2dy\leq C$$
and
$$\int_{\R^n} |x-y|^{n+2}|M_j(x,y)|^2dy\leq C~ t_{j+1}.$$
Since
$$\left(\int_{\R^n} |x-y|^{n+1}|M_j(x,y)|^2dy\right)$$
$$\leq \left(\int_{\R^n} |x-y|^{n}|M_j(x,y)|^2dy\right)^{\frac{1}{2}}\left(\int_{\R^n} |x-y|^{n+2}|M_j(x,y)|^2dy\right)^{\frac{1}{2}}$$
we get the required estimate. When $n$ is odd simply take
$l=\frac{n+1}{2}$ in the proposition.
\end{proof}
\begin{prop}
For any $ K > 0 $ there exists $ C_K > 0 $ such that $$\sup_{x\in \R^n} \int_{|x-y|\leq K}|x-y|^{n+1}|\nabla_x
M_j(x,y)|^2dy\leq C_K~ t_{j+1}^{-\frac{1}{2}}.$$
\end{prop}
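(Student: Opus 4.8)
The plan is to reduce everything to the single derivatives $\partial_{x_i}M_j$ and to exploit the factorisation $M_j=MS_j$ together with the identity $\partial_{x_i}=\frac12(A_i-A_i^*)$ acting on the $x$-variable. Since $[\partial_{x_i},M]=\frac12([A_i,M]-[A_i^*,M])=-\frac12(\delta_i+\bar\delta_i)M$, composing $\partial_{x_i}$ on the left of $M_j=MS_j$ produces the operator identity
$$\partial_{x_i}M_j=M\,(\partial_{x_i}S_j)-\tfrac12\big((\delta_i+\bar\delta_i)M\big)S_j,$$
where $\partial_{x_i}S_j=\frac12(A_i-A_i^*)S_j$ denotes the left differentiation of $S_j$. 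As $|\nabla_xM_j|^2=\sum_i|\partial_{x_i}M_j|^2$, it suffices to bound, uniformly in $x$ and summed over $i$, the weighted $L^2$-norm of the kernel of each of the two terms on the right.

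For the first term $M(\partial_{x_i}S_j)$ I would repeat the argument of Proposition 2.3 verbatim, with $\partial_{x_i}S_j$ playing the role of $S_j$. The only new ingredient is an analogue of Proposition 2.1 for $\partial_{x_i}S_j$. Since $\partial_{x_i}S_j=\frac12(A_i-A_i^*)\phi_j(H)$ is again a weighted shift, Mauceri's Lemma 2.2 expresses $\delta^\gamma\bar\delta^\rho(\partial_{x_i}S_j)$ as a finite sum of operators $(A^*)^{a}A^{b}(D_-^{c}D_+^{d}\phi_j)(H)$ carrying exactly one extra raising/lowering factor compared with the case of $S_j$; the finitely many correction terms produced by commuting $A_i,A_i^*$ through $\delta^\gamma,\bar\delta^\rho$ and through $\chi_N$ involve strictly fewer difference operators and are harmless. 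On a dyadic block $|\mu|\sim 2^N$ this extra factor contributes $(2\mu_i)^{1/2}\sim 2^{N/2}$, so the proof of Proposition 2.1 carries over and yields the same bounds with an additional $2^{N/2}$, for instance $\|\chi_N\delta^\gamma\bar\delta^\rho(\partial_{x_i}S_j)\|_{op}\le C\,(2^Nt_{j+1})\,2^{N/2}2^{-N(|\gamma|+|\rho|)/2}f_{\gamma,\rho}(2^Nt_{j+1})$. Inserting this into the block estimate of Proposition 2.3, the typical summand becomes $C\,t_{j+1}\,2^{(\frac n2+3-l)N/2}f_{\gamma,\rho}(2^Nt_{j+1})$ with $l=|\mu|+|\nu|+|\gamma|+|\rho|$; summing the rapidly decreasing series and factoring out a power of $t_{j+1}$ gives
$$\sup_{x\in\R^n}\int_{\R^n}|x-y|^{2l}\,|M(\partial_{x_i}S_j)(x,y)|^2\,dy\le C\,t_{j+1}^{\,l-\frac n2-1},\qquad l\le[\tfrac n2]+1.$$
Taking $l=\frac{n+1}2$ when $n$ is odd, and interpolating between $l=\frac n2$ and $l=\frac n2+1$ (as in Corollary 2.4) when $n$ is even, produces the weight $|x-y|^{n+1}$ with bound $C\,t_{j+1}^{-1/2}$. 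Note that this term needs no restriction on $|x-y|$.

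The second term is $\widetilde M_j:=\widetilde M\,S_j$ with $\widetilde M=\frac12(\delta_i+\bar\delta_i)M$. Here one derivative of $M$ has been spent, so $\widetilde M$ is only of class $C^{[n/2]}$ and satisfies the kernel hypothesis of Theorem 1.8 for $|\alpha|+|\beta|\le[\tfrac n2]$. Consequently Proposition 2.3 applied to $\widetilde M$ yields $\sup_x\int|x-y|^{2l}\,|\widetilde M_j(x,y)|^2\,dy\le C\,t_{j+1}^{\,l-\frac n2}$ only for $l\le[\tfrac n2]$, which is one power of $|x-y|$ short of $|x-y|^{n+1}$. This is exactly where the localisation enters: on $|x-y|\le K$ we have $|x-y|^{n+1}\le C_K\,|x-y|^{2[n/2]}$, whence
$$\sup_{x\in\R^n}\int_{|x-y|\le K}|x-y|^{n+1}\,|\widetilde M_j(x,y)|^2\,dy\le C_K\,t_{j+1}^{\,[n/2]-\frac n2}\le C_K\,t_{j+1}^{-1/2},$$
the last step because $[\tfrac n2]-\frac n2\in\{0,-\tfrac12\}$ and $t_{j+1}\le 1$. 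Summing the two contributions over $i=1,\dots,n$ proves the proposition.

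I expect the main obstacle to be the analogue of Proposition 2.1 for $\partial_{x_i}S_j$: one must verify that inserting $\frac12(A_i-A_i^*)$ costs precisely $2^{N/2}$ on each dyadic block and that the commutator corrections created while moving $A_i,A_i^*$ past $\delta^\gamma,\bar\delta^\rho$ and $\chi_N$ are genuinely of lower order, so that the rapidly decreasing majorant $f_{\gamma,\rho}$ is preserved. The conceptual key, however, is the splitting of $\partial_{x_i}M_j$ displayed above: it isolates the ``good'' factor $\partial_{x_i}S_j$ acting on the explicitly controllable operator $S_j$, while relegating the loss of one derivative entirely to the term $\widetilde M S_j$, whose deficiency is then repaired by the hypothesis $|x-y|\le K$.
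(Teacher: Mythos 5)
Your proposal is correct and is essentially the paper's own argument: the paper writes $\partial_{x_i}M_j$ via $A_iM_j=\delta_iM_j+M_jA_i$ with $\delta_iM_j=(\delta_iM)S_j+M(\delta_iS_j)$, which after regrouping is exactly your splitting into $M(\partial_{x_i}S_j)$ (extra $2^{N/2}$ in the Proposition 2.1--type bound, weights up to $l\le[\frac n2]+1$, interpolation for $n$ even) plus $((\delta_i+\bar\delta_i)M)S_j$ (one derivative of $M$ spent, only $l\le[\frac n2]$ available, repaired by the localisation $|x-y|\le K$). Your intermediate bound $t_{j+1}^{l-\frac n2}$ for the second term is slightly weaker than the $t_{j+1}^{l+1-\frac n2}$ the hypothesis on $\delta_iM$ actually yields, but it still suffices for the stated conclusion.
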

\begin{proof}
For any $i=1, 2, \cdots, n$, $\frac{\partial}{\partial
x_i}M_j(x,y)$ is the kernel of $\frac{\partial}{\partial x_i}M_j$
which we can write as $\frac{1}{2}(A_i+A^*_i)M_j$. It is therefore
enough to prove the estimates for the kernels of $A_i M_j$ and
$A^*_i M_j$. We will consider $A_i M_j$; the other case is
similar. Since $A_i M_j= \delta_i M_j+M_j A_i$, we consider
$\delta_i M_j$ and $M_j A_i$ separately.\\

Recall that by our assumption, $\delta_i M $ satisfies the estimates
$$\int_{\R^n} |(\delta^\alpha \bar{\delta}^\beta (\delta_i M)\chi_N)(x,y)|^2dy\leq C~ 2^{N(\frac{n}{2}-|\alpha|-|\beta|-1)}$$
for all $|\alpha|+|\beta|\leq [\frac{n}{2}]$. Since $\delta_i
M_j= \delta_i(MS_j) = (\delta_i M)S_j+M(\delta_i S_j) $ repeating the proof of
Proposition 2.3 we obtain
$$\left(\int_{\R^n} |x-y|^{2l}|\delta_i M_j(x,y)|^2dy\right)^{\frac{1}{2}}\leq C  t_{j+1}^{(l+1-\frac{n}{2})/2}$$
for all $l\leq [\frac{n}{2}]$. When $n$ is even, taking
$l=\frac{n}{2}$ we get the estimate
$$\left(\int_{\R^n} |x-y|^n |\delta_i M_j(x,y)|^2 dy\right)^{\frac{1}{2}}\leq C t_{j+1}^{\frac{1}{2}}.$$
Since $|x-y|^{n+1}\leq K |x-y|^n$ for $|x-y|\leq K $ and
$t_{j+1}\leq 1$ we get the required estimate in this case. When
$n$ is odd, we take $l=\frac{n-1}{2}$ which gives
$$\left(\int_{\R^n} |x-y|^{n-1} |\delta_i M_j(x,y)|^2 dy\right)^{\frac{1}{2}}\leq C~ t_{j+1}^{\frac{1}{4}}$$
which again leads to the required estimate.\\

In order to estimate the kernel of $M_j A_i= M(S_j A_i)$ we use
the decomposition
$$(\delta^\alpha \bar{\delta}^\beta M)(\delta^\gamma \bar{\delta}^\rho (S_jA_i))=
\sum_{N=0}^\infty (\delta^\alpha \bar{\delta}^\beta M)\chi_N.
\chi_N (\delta^\gamma \bar{\delta}^\rho(S_j A_i)).$$ Since $A_i
\Phi_{\alpha}(x) = (2\alpha_j)^{\frac{1}{2}}\Phi_{\alpha-e_i}(x)$,
$e_i=(0, \cdots, 1, \cdots, 0)$, the operator norm of
$\chi_N(\delta^\gamma \bar{\delta}^\rho(S_j A_i))$ can be
estimated as in Proposition 2.1 to obtain
$$||\chi_N(\delta^\gamma \bar{\delta}^\rho(S_j A_i))||_{op}\leq C~ t_{j+1} 2^{(3-|\gamma|-|\rho|)N/2} f_{\gamma, \rho}(2^N t_{j+1}).$$
This extra factor of $2^{\frac{N}{2}}$ appearing on the right hand
side leads to the estimate, as in Proposition 2.3,
$$\int_{\R^n} |x-y|^{2l}|M_j A_i(x,y)|^2dy\leq C~ t_{j+1}^{l-\frac{n}{2}-1}.$$
When $n$ is odd, $l=\frac{n+1}{2}$ gives the estimate
$$\int_{\R^n}|x-y|^{n+1} |M_j A_i(x,y)|^2dy\leq C~ t_{j+1}^{-\frac{1}{2}};$$
when $n$ is even we interpolate between $l=\frac{n}{2}$ and
$l=\frac{n}{2}+1$ as in Corollary 2.4 to get the required
estimate.

\section[Weighted norm estimates for $M$]{Weighted norm estimates
for $M$} In this section we prove Theorem 1.8 by establishing the
following results. Let $\Lambda^\sharp f$ stand for the sharp
maximal function and $\Lambda_2 f= (\Lambda|f|^2)^{\frac{1}{2}}$
where $\Lambda$ is the Hardy-Littlewood maximal function.
\begin{thm}
Let $M$ satisfy the hypothesis of Theorem 1.8. Then we have
$$\Lambda^\sharp((\sum_{j=0}^N M_j)f)(x)\leq
C~(\Lambda_2f(x)+\Lambda(\Lambda_2 f)(x))$$
for any $f\in L^p(\R^n)$ and $
N\in \N$ where $C$ is independent of $N$.
\end{thm}
For any  $f\in L^p(\R^n)$, $S_jf$ is a
Schwartz function and therefore, $M_jf = M(S_jf)$ is well defined as an $L^2$
function. Moreover, each $M_j$ is an integral operator and
consequently $\sum_{j=0}^N M_j$ is also such an operator. We write
$T_N$ for $\sum_{j=0}^N M_j$ and let $K_N(x,y)$ stand for its
kernel. Thus
$$T_Nf(x)=\int_{\R^n} K_N(x, y)f(y)dy, \;\;\;\;\; K_N(x,y)=\sum_{j=0}^N M_j(x,y)$$
We will prove that $\Lambda^\sharp(T_Nf)(x)\leq C( \Lambda_2f(x)+
\Lambda(\Lambda_2f)(x))$ uniformly in $N$.\\

Given a cube $Q$ containing $x\in \R^n,$ let $2Q$ stand for the
double of $Q$. Define $f_1=f \chi_{2Q}$ and $ f_2=f-f_1$. We also
let $T_N^{(1)}$ and $T_N^{(2)}$ to stand for operators with
kernels $K_N^{(1)}(y,z)=K_N(y,z)\phi(y-z)$ and
$K_N^{(2)}(y,z)=K_N(y,z)(1-\phi(y-z))$ for a fixed  $\phi\in
C^\infty_0(\R^n)$, supported in $|x|\leq 1$ and satisfying $\phi(x)=1$ for
$|x|\leq \frac{1}{2}$. Thus we have
$$T_Nf(y)=T_Nf_1(y)+T_N^{(1)}f_2(y)+T_N^{(2)}f_2(y).$$
By taking $a=T_N^{(1)} f_2(x)$ we see that the mean value
$\frac{1}{|Q|}\int_Q |T_N f(y)-a|dy$ is bounded by
$$\frac{1}{|Q|}\left(\int_Q |T_N f_1(y)|dy+\int_Q |T_N^{(1)}f_2(y)- T_N^{(1)} f_2(x)| dy+\int_Q |T_N ^{(2)} f_2(y)|dy\right).$$
The first term is easy to handle. Indeed, we have $T_N= M
e^{-t_{N+1}H}$, and hence the boundedness of $M$ on
$L^2(\R^n)$ implies  that $T_N$ are uniformly bounded on $L^2(\R^n)$.
Consequently, \Bea \frac{1}{|Q|}\int_Q |T_N f_1(y)|dy&\leq&
\left(\frac{1}{|Q|}\int_Q |T_N f_1(y)|^2dy\right)^{\frac{1}{2}}\\
&\leq& C~ \left(\frac{1}{|Q|}\int_{2Q} |
f(y)|^2dy\right)^{\frac{1}{2}}\leq C~ \Lambda_2 f(x).\Eea This
takes care of the first term.\\

In order to deal with the second and third terms we make use of
the estimates on the kernels $M_j(x,y)$. Consider
$$\frac{1}{|Q|} \int_Q |T_N^{(1)}f_2 (y)-T_N^{(1)}f_2(x)| dy$$
$$\leq \frac{1}{|Q|} \int_Q \left(\int_{\R^n\setminus 2Q}|K_N^{(1)}(y,z)-K_N^{(1)}(x,z)||f(z)|dz\right) dy.$$
Let $u$ be the center of $Q$ and $l(Q)$ the side length of $Q$.
Since $ \R^n\setminus 2Q $ is contained in the union of the annuli  $ 2^kl(Q)<|z-u|< 2^{k+1} l(Q), k=1,2,\ldots  ,$  the inner integral above dominated by
$$\sum_{k=1}^\infty \int_{2^k l(Q)<|z-u|\leq 2^{k+1}l(Q)}|K^{(1)}_N(y,z)-K_N^{(1)}(x,z)||f(z)|dz.$$
Applying Cauchy-Schwarz inequality each integral in the above sum  is bounded by the product of
$$\sum_{k=1}^\infty \left(\int_{2^kl(Q)<|z-u|\leq 2^{k+1} l(Q)}|x-z|^{-n-1}|f(z)|^2
dz\right)^{\frac{1}{2}}$$
and $$\left(\int_{2^kl(Q)<|z-u|\leq  2^{k+1}
l(Q)} |K_N^{(1)}(y,z)-K_N^{(1)}(x,z)|^2 |x-z|^{n+1}
dz\right)^{\frac{1}{2}}.$$ Since $x\in Q$, $|x-u|\leq l(Q)$
whereas $|z-u|> 2^k l(Q)$. This means that
$$ |x-z| \geq |z-u|-|x-u| \geq (2^k-1)l(Q) \geq 2^{k-1}l(Q) $$ and therefore,
$$\left(\int_{2^k l(Q)<|z-u|\leq 2^{k+1}l(Q)} |x-z|^{-n-1}|f(z)|^2dz\right)^{\frac{1}{2}}$$
$$\leq C~ \left((2^{k-1}l(Q))^{-n-1}\int_{|z-u|\leq 2^{k+1}l(Q)}|f(z)|^2dz\right)^{\frac{1}{2}}$$
$$\leq C~ 2^{-k/2} l(Q)^{-\frac{1}{2}} \Lambda_2 f(x).$$\\

If we can show that for all $y\in Q$
$$\left(\int_{2^kl(Q)<|z-u| \leq 2^{k+1} l(Q)}
|K_N^{(1)}(y,z)-K_N^{(1)}(x,z)|^2 |x-z|^{n+1}
dz\right)^{\frac{1}{2}}\leq C~ l(Q)^{\frac{1}{2}},$$ then
$$\frac{1}{|Q|} \int_Q |T_N^{(1)}f_2(y)-T_N^{(1)}f_2(x)|dy
\leq C~ \left(\sum_{k=1}^\infty 2^{-k/2}\right) \Lambda_2 f(x)\leq C
\Lambda_2 f(x)$$ as desired.
Since $K_N=\sum_{j=0}^N M_j$ it is enough to show that
$$\sum_{j=0}^N \left(\int_{2^k l(Q)< |z-u|\leq 2^{k+1} l(Q)} |x-z|^{n+1}
|M_j^{(1)}(x,z)-M_j^{(1)}(y, z)|^2 dz\right)^{\frac{1}{2}}\leq C
l(Q)^{\frac{1}{2}}
$$
uniformly in $x, y$ and $N$. To this end we will prove the
following estimate:
$$\left(\int_{2^k l(Q)< |z-u|\leq 2^{k+1} l(Q)} |x-z|^{n+1}
|M_j^{(1)}(x,z)-M_j^{(1)}(y, z)|^2 dz\right)^{\frac{1}{2}}$$
$$\leq C~ l(Q)^{\frac{1}{2}}\min \left\{\frac{l(Q)^{\frac{1}{2}}}{t_{j+1}^{\frac{1}{4}}},\frac{t_{j+1}^{\frac{1}{4}}}{l(Q)^{\frac{1}{2}}}\right\}.$$
It is not difficult to show that the follwing series converges and
$$\sum_{j=0}^\infty \min \left\{\frac{l(Q)^{\frac{1}{2}}}{t_{j+1}^{\frac{1}{4}}},\frac{t_{j+1}^{\frac{1}{4}}}{l(Q)^{\frac{1}{2}}}\right\}\leq C$$
with a constant $ C $ independent of $ l(Q).$  Therefore by summing up over $ j $ we get the required estimate.\\

Since $x,y\in Q$ and $|z-u|>2^k l(Q)$, $|x-z|$ and $|y-z|$ are
comparable. Hence
$$\left(\int_{2^k l(Q)< |z-u|\leq 2^{k+1} l(Q)} |x-z|^{n+1}
|M_j^{(1)}(x,z)-M_j^{(1)}(y, z)|^2 dz\right)^{\frac{1}{2}}$$
$$\leq 2 \left(\int_{2^k l(Q)< |z-u|\leq 2^{k+1} l(Q)} |y-z|^{n+1}
|M_j^{(1)}(y, z)|^2 dz\right)^{\frac{1}{2}}$$
$$\leq C~ \left(\int_{\R^n} |y-z|^{n+1}
|M_j^{(1)}(y, z)|^2 dz\right)^{\frac{1}{2}}.$$ By
Corollary 2.4, the above  is bounded by  $t_{j+1}^{\frac{1}{4}}.$
In other words
$$\left(\int_{2^k l(Q)< |z-u|\leq 2^{k+1} l(Q)} |x-z|^{n+1}
|M_j^{(1)}(x,z)-M_j^{(1)}(y, z)|^2 dz\right)^{\frac{1}{2}}$$
$$\leq C~ l(Q)^{\frac{1}{2}}(t_{j+1}^{\frac{1}{4}} l(Q)^{-\frac{1}{2}}).$$\\

On the other hand by mean value theorem we can get another
estimate. By the definition of the truncated kernels
$$M_j(x,z) \phi(x-z)- M_j(y,z) \phi(y-z)$$
$$= (M_j(x,z)-M_j(y,z)) \phi(x-z)+M_j(y,z)(\phi(x-z)-\phi(y-z)).$$
By mean value theorem the above is bounded by
$$|x-y| (|\nabla_x M_j(\tilde{x}, z)||\phi(x-z)|+|\nabla_x \phi ( \tilde{y}-z)||M_j(y,z)|$$
$$\leq |x-y| |\nabla_x M_j(\tilde{x},z)||\phi(x-z)|+c |x-y| |M_j(y,z)|)$$
where $\tilde{x}$ and $ \tilde{y} $ are points on the line joining $x$ and $y$. The
integral corresponding to the second term is dominated by
$$\left(|x-y|^2 \int_{\R^n} |x-z|^{n+1}|M_j(y,z)|^2 dz\right).$$
Since $|x-z|$ is comparable to $|y-z|$ for $x, y\in Q$ and
$|x-y|\leq 2 l(Q)$, in view of Corollary 2.4 the above gives the
estimate
$$C~ l(Q)^2 t_{j+1}^{\frac{1}{2}} \leq C~ l(Q) \frac{l(Q)}{t_{j+1}^{\frac{1}{2}}}.$$
It therefore suffices to prove the estimate
$$\left(\int_{2^k l(Q)< |z-u|\leq 2^{k+1} l(Q)} |x-z|^{n+1}|\nabla M_j(\tilde{x}, z)|^2 |\phi(x-z)|^2 dz\right)^{\frac{1}{2}}\leq C~ t_{j+1}^{-\frac{1}{4}}.$$
Since $\phi$ is supported on $|y|\leq 1$, the integral is taken
over $|x-z|\leq 1$. When $y\in Q$, $|y-z|\leq |y-x|+|x-z|\leq 2
l(Q)+|x-z|$ and as $|x-z|\geq \frac{1}{2} l(Q)$ we see that
$|y-z|\leq 5 |x-z|\leq 5.$ Similarly, $|\tilde{x}-z|\leq 5$ for
any point on the line joining $x$ and $y$. Also, note that $|x-z|$
is comparable to $|\tilde{x}-z|$. Hence the above integral is
bounded by
$$\left(\int_{|\tilde{x}-z|\leq 5} |\tilde{x}-z|^{n+1}|\nabla M_j(\tilde{x},z)|^2dz\right)^{\frac{1}{2}}\leq C~ t_{j+1}^{-\frac{1}{4}},$$
in view of Proposition 2.5. Thus we have proved
$$\left(\int_{2^k l(Q)< |z-u|\leq 2^{k+1} l(Q)}|x-z|^{n+1}|M_j^{(1)}(x,z)- M_j^{(1)}(y,z)|^2dz\right)^{\frac{1}{2}}$$
$$\leq C~ l(Q)^{\frac{1}{2}}\min\{\frac{l(Q)^{\frac{1}{2}}}{t_{j+1}^{\frac{1}{4}}}, \frac{t_{j+1}^{\frac{1}{4}}}{l(Q)^{\frac{1}{2}}}\}$$
which is the desired estimate.\\

Finally, coming to the third term $T_N^{(2)}f_2$ we consider
$$M_j^{(2)}f_2(y)=\int_{\R^n} M_j(y,z) (1-\phi(y-z)) f_2(z)dz.$$
With $s_j=t_{j}^{\frac{1}{2}}$, $M_j^{(2)}f_2(y)$  is bounded by $$
 \int_{|z-y|\geq \frac{1}{2}} (1+
s_{j+1}^{-1}|z-y|^2)^{\frac{n+1}{4}}|M_j(y,z)|(1+
s_{j+1}^{-1}|z-y|^2)^{-\frac{n+1}{4}}|f(z)|dz$$
$$ \leq C~ s_{j+1}^{-\frac{n+1}{4}}
\int_{\R^n}|z-y|^{\frac{n+1}{2}}|M_j(y,z)|(1+
s_{j+1}^{-1}|z-y|^2)^{-\frac{n+1}{4}}|f(z)|dz $$
 where we have
used the fact that $|z-y|\geq \frac{1}{2} $ and $s_{j+1}^{-1}\geq
1$. By Cauchy-Schwarz the above gives \Bea |M_j^{(2)}
f_2(y)|&\leq& C~ s_{j+1}^{-\frac{n+1}{4}}\left(\int_{\R^n}
|y-z|^{n+1}|M_j(y,z)|^2dz\right)^{\frac{1}{2}}\\
&& \left(\int_{\R^n}
(1+s_{j+1}^{-1}|y-z|^2)^{-\frac{n+1}{2}}|f(z)|^2
dz\right)^{\frac{1}{2}}.\Eea
Since $ (1+|z|^2)^{-\frac{n+1}{2}} $ is radial and integrable, the second term is bounded by
$s_{j+1}^{\frac{n}{4}}\Lambda_2 f(y)$ ( see Theorem 2, Chapter III in \cite{S}) whereas by Corollary 2.4 the
first term is bounded by
$t_{j+1}^{\frac{1}{4}}=s_{j+1}^{\frac{1}{2}}$. Consequently
$$|M_j^{(2)}f_2(y)|\leq C~ s_{j+1}^{\frac{1}{4}}\Lambda_2f(y).$$
Therefore, \Bea \frac{1}{|Q|}\int_Q|T_N^{(2)} f_2(y)|dy &\leq& ~   \left(  \sum_{j=0}^N \frac{1}{|Q|}\int_Q |M_j^{(2)} f_2(y)|dy \right)\\
&\leq& C~ \left( \sum_{j=0}^N s_{j+1}^{1/4} \right) \frac{1}{|Q|} \int_Q \Lambda_2 f(y) dy.\Eea Taking sup
over all $Q$ containing $x$ we get
$$\frac{1}{|Q|}\int_Q |T^{(2)}_N f_2(y)|dy\leq C~ \Lambda(\Lambda_2 f)(x).$$
This completes the proof.\\

We are now in a position to prove Theorem 1.8. We will make use of
the relation between the sharp maximal function $\Lambda^\sharp$
and the dyadic maximal function $\Lambda_d$. We require (see Lemma
7.10 in \cite{D})
\begin{lem} Let $w\in A_p(\R^n)$, $1\leq p_0\leq p<\infty$. Then
$$\int_{\R^n}|\Lambda_d f(x)|^p w(x)dx\leq C~\int_{\R^n}|\Lambda^\sharp f(x)|^p w(x)dx$$
whenever $\Lambda_d f\in L^{p_0}(\R^n, w)$.
\end{lem}
From the pointwise estimate $|T_Nf(x)|\leq \Lambda_d (T_N f)(x),$
valid for a.e. $x,$ we get
$$\int_{\R^n} |T_N f(x)|^p dx\leq C~ \int_{\R^n} (\Lambda_d(T_Nf)(x))^pdx.$$
As $T_N$ are uniformly bounded on $L^2(\R^n)$, $\Lambda_d(T_N
f)\in L^2(\R^n)$, and hence by the above lemma
$$\int_{\R^n} |T_N f(x)|^pdx\leq C~ \int_{\R^n}|\Lambda^\sharp (T_N f(x))|^p dx$$
for any $p>2$. In view of Theorem 3.1 using the boundedness of
$\Lambda$ and $\Lambda_2$ on $L^p(\R^n)$, $p>2$ we obtain
$$\int_{\R^n} |T_N f(x)|^p dx\leq C~ \int_{\R^n} |f(x)|^pdx$$
valid on $L^p\cap L^2(\R^n)$, $p>2$. By the uniform boundedness
principle, we see that the limiting operator, namely $M$, has a
bounded extension to $L^p(\R^n)$, $p>2$.\\

As for the weighted inequality, once again we are led to check if
$\Lambda_d(T_N f)\in L^{p_0}(\R^n, w)$ for some $1\leq p_0\leq
p$, $w\in A_{p/2}$ on a dense class of functions. Since
$A_{p/2}\subset A_p$ and $\Lambda_d$ is bounded on $L^p(\R^n, w)$,
$w\in A_p$, it is enough to check that for all $f\in C_0^\infty
(\R^n)$, $T_Nf\in L^p(\R^n, w)$ when $w\in A_{p/2}$.\\

Suppose $f\in C^\infty_0(\R^n)$ is supported in $|x|\leq R$. Then
$$\int_{|x|\leq 2R} |T_N f(x)|^p w(x)dx\leq \left(\int_{|x|\leq 2R}w(x)^{1+\epsilon} dx\right)^{\frac{1}{1+\epsilon}}
\left(\int_{|x|\leq 2R}|T_N
f(x)|^{p\frac{1+\epsilon}{\epsilon}}dx\right)^{\frac{\epsilon}{1+\epsilon}}$$
By reverse H\"{o}lder inequality, we can choose $\epsilon>0$ so
that the first factor is also finite. For $|x|>2R$,
$$ |T_Nf(x)|\leq \int_{|y|\leq R} |K_N(x,y)| |x-y|^{\frac{n+1}{2}} |f(y)| |x-y|^{-\frac{n+1}{2}}dy$$
which gives the estimate, since $|x-y|>\frac{1}{2}|x|$,
$$|T_N f(x)|\leq C~ |x|^{-\frac{n+1}{2}}\left(\int_{|y|\leq R}|f(y)|^2 dy\right)^{\frac{1}{2}}\left(\int_{\R^n}|K_N(x,y)|^2|x-y|^{n+1}dy\right)^{\frac{1}{2}}.$$
If we use the estimates in Corollary 2.4 we get
$$|T_N f(x)|\leq C~ ||f||_\infty |x|^{-\frac{n+1}{2}}(\sum_{j=0}^N t_{j+1}^{\frac{1}{4}})\leq c(f) |x|^{-\frac{n+1}{2}}.$$
Therefore,
$$\int_{|x|>2R} |T_N f(x)|^p w(x)dx \leq C(f) \sum_{k=1}^\infty \int_{2^kR \leq |x| \leq 2^{k+1}R} w(x) |x|^{-\frac{n+1}{2}p}dx$$
which is bounded by
$$C(f) \sum_{k=1}^\infty (2^k R)^{-\frac{n+1}{2}p}\left(\int_{|x|\leq 2^{k+1}R}w(x)dx\right)$$
$$\leq C(f)\sum_{k=1}^\infty (2^k R)^{-\frac{n+1}{2}p}(2^{k+1}R)^{n p/2}< \infty$$
since $w(B(0, 2^{k+1}R))\leq C(R) (2^{k+1}R)^{p/2}$ as $w\in
A_{p/2}$.\\

Thus the hypothesis of Lemma 3.2 is satisfied and so for $p>2$,
$w\in A_{p/2}$ we get
$$\int_{\R^n} |T_N f(x)|^p w(x)dx\leq C~\int_{\R^n}|f(x)|^p w(x)dx.$$
Again by uniform boundedness principle the limiting operator $M$
is bounded on $L^p(\R^n, wdx)$.\\

\end{proof}
\section[The boundedness of $M$ on $L^p(\R^n)$]{The boundedness of $M$ on
$L^p(\R^n)$} We first observe that Theorem 1.6 follows from
Theorem 1.8. Indeed, we have the boundedness of $M$ on $L^p(\R^n)$
for $p>2$ and as $M^*$ also satisfies the same hypothesis as $ M,$
by duality we obtain the boundedness on $1<p<2$ as well. We now
proceed to prove Theorem 1.7 and 1.9. We need the following
analogue of Proposition 2.3.
\begin{prop}
Assume that $M$ satisfies the hypothesis of Theorem 1.7. Then with
the same notation as in section 2, we have
$$\sup_{x,y\in \R^n}|x-y|^l |M_j(x,y)|\leq C~ t_{j+1}^{(l-n)/2}$$
for all $l\leq n+1$.
\end{prop}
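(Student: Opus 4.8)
The plan is to repeat the argument of Proposition 2.3 one step stronger, replacing the $L^2$ bound on the kernel by a pointwise bound; the extra strength comes from applying the Cauchy--Schwarz inequality in the intermediate variable of a product of operators, instead of pairing an operator norm against an $L^2$ norm. First I would reduce the weighted pointwise estimate to pointwise bounds on the kernels of the non-commutative derivatives of $M_j$. Exactly as in Proposition 2.3, since $(x_i-y_i)M_j(x,y)$ is the kernel of the commutator $[x_i,M_j]$ and $x_i=\frac12(A_i+A_i^*)$, the kernel $(x-y)^\gamma M_j(x,y)$ with $|\gamma|=l$ is a finite linear combination of kernels of the operators $\delta^\alpha\bar{\delta}^\beta M_j$ with $|\alpha|+|\beta|=l$. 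As $|x-y|^l$ is comparable to $\sum_{|\gamma|=l}|(x-y)^\gamma|$, it therefore suffices to show that $\sup_{x,y\in\R^n}|(\delta^\alpha\bar{\delta}^\beta M_j)(x,y)|\le C\,t_{j+1}^{(l-n)/2}$ for every pair with $|\alpha|+|\beta|=l\le n+1$.

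Next I would expand by the Leibnitz rule and the dyadic decomposition as before: $\delta^\alpha\bar{\delta}^\beta M_j=\delta^\alpha\bar{\delta}^\beta(MS_j)$ is a finite sum of terms $(\delta^\mu\bar{\delta}^\nu M)(\delta^\gamma\bar{\delta}^\rho S_j)$ with $|\mu|+|\nu|+|\gamma|+|\rho|=l$, and each such term splits as $\sum_{N=0}^\infty (\delta^\mu\bar{\delta}^\nu M)\chi_N\cdot\chi_N(\delta^\gamma\bar{\delta}^\rho S_j)$. Writing $P=(\delta^\mu\bar{\delta}^\nu M)\chi_N$ and $Q=\chi_N(\delta^\gamma\bar{\delta}^\rho S_j)$, the kernel of $PQ$ at $(x,y)$ is $\int_{\R^n}P(x,z)Q(z,y)\,dz$, which I would bound pointwise by Cauchy--Schwarz in $z$:
$$|(PQ)(x,y)|\le \Big(\int_{\R^n}|((\delta^\mu\bar{\delta}^\nu M)\chi_N)(x,z)|^2\,dz\Big)^{\frac12}\Big(\int_{\R^n}|(\chi_N(\delta^\gamma\bar{\delta}^\rho S_j))(z,y)|^2\,dz\Big)^{\frac12}.$$
The first factor is at most $C\,2^{N(\frac{n}{2}-|\mu|-|\nu|)/2}$ by the hypothesis of Theorem 1.7, which needs $|\mu|+|\nu|\le l\le n+1$, and this is precisely where class $C^{n+1}$ is used. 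The second factor requires the second part of Proposition 2.1 but with the integration taken in the \emph{first} kernel variable; this holds by the very same computation (the band sum $\sum_{|\mu|\sim 2^N}\Phi_\mu^2\le C\,2^{Nn/2}$ now falls on $y$ while the shift coefficients of $(A^*)^{\rho+\alpha-\gamma}A^\alpha$ are read off in $z$), so it is bounded by $C\,(2^Nt_{j+1})2^{N(\frac{n}{2}-|\gamma|-|\rho|)/2}f_{\gamma,\rho}(2^Nt_{j+1})^{1/2}$. Multiplying and using $|\mu|+|\nu|+|\gamma|+|\rho|=l$ gives
$$|(PQ)(x,y)|\le C\,(2^Nt_{j+1})\,2^{N(n-l)/2}\,f_{\gamma,\rho}(2^Nt_{j+1})^{1/2}.$$

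Finally I would sum over $N$. Setting $u_N=2^Nt_{j+1}$ and factoring out $t_{j+1}$, the bound on $\sup_{x,y}|(\delta^\alpha\bar{\delta}^\beta M_j)(x,y)|$ becomes
$$C\,t_{j+1}^{(l-n)/2}\sum_{N=0}^\infty u_N^{\,1+(n-l)/2}\,f_{\gamma,\rho}(u_N)^{1/2},\qquad u_N=2^Nt_{j+1}.$$
The genuine constraint of the proof sits here: the restriction $l\le n+1$ makes the exponent $1+(n-l)/2\ge\frac12>0$, and together with the exponential decay of $f_{\gamma,\rho}$ this forces the dyadic sum to converge, uniformly in $j$, by comparison with $\int_0^\infty u^{(n-l)/2}f_{\gamma,\rho}(u)^{1/2}\,du$ (integrable at $0$ since $(n-l)/2>-1$ and at $\infty$ by rapid decay). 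Summing the finitely many Leibnitz terms then yields $\sup_{x,y\in\R^n}|x-y|^l|M_j(x,y)|\le C\,t_{j+1}^{(l-n)/2}$, as claimed. The main work is thus the careful bookkeeping of the powers of $2^N$ and $t_{j+1}$ so that the series telescopes correctly; the only conceptual points are the passage from an $L^2$ to a pointwise kernel bound via Cauchy--Schwarz in the intermediate variable, and the observation that the $S_j$-estimate of Proposition 2.1 is symmetric in its two kernel variables.
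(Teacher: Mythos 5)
Your proposal is correct and follows essentially the same route as the paper: reduce to the terms $(\delta^\mu\bar{\delta}^\nu M)\chi_N\cdot\chi_N(\delta^\gamma\bar{\delta}^\rho S_j)$, apply Cauchy--Schwarz in the intermediate variable pairing the hypothesis on $M$ against estimate (2.2) for $S_j$, and sum the dyadic series using $l\leq n+1$ and the rapid decay of $f_{\gamma,\rho}$. Your explicit justification that the $S_j$-estimate holds with the integration taken in the first kernel variable is a point the paper passes over silently, but it does not change the argument.
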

\begin{proof}
As in the proof of Proposition 2.3, it is enough to estimate the
$L^{\infty}$-norm of \be
\sum_{N=0}^{\infty}\int_{\R^n}((\delta^\mu\bar{\delta}^\nu
M)\chi_N)(x,y')(\chi_N(\delta^\gamma \bar{\delta}^\rho
S_j))(y',y)dy'\ee for all $\mu, \nu, \gamma, \rho$ satisfying
$|\mu|+|\nu|+|\gamma|+|\rho|=l$. By Cauchy-Schwarz inequality
$$|\int_{\R^n}((\delta^{\mu}\bar{\delta}^{\nu}M)\chi_N)(x,y')(\chi_N(\delta^\gamma \bar{\delta}^\rho S_j)(y',y)dy'|$$
$$\leq \left(\int_{\R^n}|(\delta^\mu \bar{\delta}^\nu
M)\chi_N)(x,y')|^2dy'\right)^{\frac{1}{2}}
\left(\int_{\R^n}|(\chi_N(\delta^{\gamma}\bar{\delta}^\rho
S_j))(y',y)dy'|^2\right)^{\frac{1}{2}}$$
$$\leq C~2^{\frac{N}{2}(\frac{n}{2}-|\mu|-|\nu|)} (t_{j+1} 2^N) 2^{\frac{N}{2}(\frac{n}{2}-|\gamma|-|\rho|)}f_{\gamma, \rho}(2^Nt_{j+1})^{\frac{1}{2}}$$
where we have used the estimate 2.2. Hence $L^\infty$-norm of
(4.4) is bounded by
$$ t_{j+1} ~ \sum_{N=0}^\infty 2^{\frac{N}{2}(n-l+2)} f_{\gamma, \rho}(2^N t_{j+1})^{\frac{1}{2}}\leq C~ t_{j+1}^{(l-n)/2}$$
since the functions $ f_{\gamma,\rho} $ have exponential decay.
\end{proof}
\begin{cor}
Under the hypothesis of Theorem 1.7 we have the estimate
$$\sup_{x,y}|x-y|^{n+\frac{1}{2}}|M_j(x,y)|\leq C~
t_{j+1}^{\frac{1}{4}}.$$

\end{cor}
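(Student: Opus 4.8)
The plan is to obtain this half-integer kernel bound by a simple geometric interpolation between the two consecutive integer endpoints already furnished by Proposition 2.6. First I would apply Proposition 2.6 with $l=n$ and with $l=n+1$ (both admissible, since $n+1\leq n+1$), which gives, uniformly in $x,y\in\R^n$ and in $j$,
$$|x-y|^n|M_j(x,y)|\leq C,\qquad |x-y|^{n+1}|M_j(x,y)|\leq C~t_{j+1}^{1/2}.$$

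It is worth noting that here, in contrast with Corollary 2.4, no case split on the parity of $n$ is needed. In Corollary 2.4 the exponent restriction $l\leq[\frac{n}{2}]+1$ built into Proposition 2.3 prevented both of the endpoints $l=n$ and $l=n+1$ from being available simultaneously, forcing an interpolation on the $L^2$-averaged quantities and a separate treatment of the odd and even cases. By contrast, Proposition 2.6 is a genuinely pointwise estimate valid for every integer $l\leq n+1$, so it supplies both endpoints $l=n$ and $l=n+1$ directly, regardless of the parity of $n$.

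The interpolation itself is then just the geometric mean of the two displayed bounds. Writing $|M_j(x,y)|=|M_j(x,y)|^{1/2}|M_j(x,y)|^{1/2}$ and inserting the two estimates, I would get
$$|M_j(x,y)|\leq\bigl(C|x-y|^{-n}\bigr)^{1/2}\bigl(C~t_{j+1}^{1/2}|x-y|^{-(n+1)}\bigr)^{1/2}=C~t_{j+1}^{1/4}|x-y|^{-(n+1/2)},$$
and multiplying through by $|x-y|^{n+1/2}$ yields exactly
$$|x-y|^{n+1/2}|M_j(x,y)|\leq C~t_{j+1}^{1/4}.$$
Taking the supremum over $x,y$ finishes the argument. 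There is no real obstacle here: all the analytic content lives in Proposition 2.6, and this corollary merely packages the half-integer exponent $n+\tfrac12$ that is convenient for the $L^\infty$-kernel and weak-type estimates in Section 4. The only point worth flagging is the observation in the previous paragraph—namely that the pointwise form of Proposition 2.6 lets us bypass the parity dichotomy altogether.
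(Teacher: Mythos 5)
Your proof is correct and is exactly the argument the paper intends: the corollary is stated without proof as an immediate consequence of the pointwise bound $\sup_{x,y}|x-y|^{l}|M_j(x,y)|\leq C\,t_{j+1}^{(l-n)/2}$ at the two endpoints $l=n$ and $l=n+1$, combined by taking the geometric mean, and your observation that the parity split of Corollary 2.4 is not needed here is accurate. The only slip is the reference: the pointwise estimate you invoke is Proposition 4.1 in the paper's numbering, not ``Proposition 2.6.''
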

In the next proposition we will prove $L^\infty$ estimates of the
gradient of the kernel of $M_j$.
\begin{prop}
Under the hypothesis of Theorem 1.7 we have the estimates
$$\sup_{|x-y|\leq K} |x-y|^l |\nabla_x M_j(x,y)||\leq C_K
t_{j+1}^{(l-n-1)/2}$$ and
$$\sup_{x,y\in \R^n} |x-y|^l |\nabla_y M_j(x,y)||\leq C
t_{j+1}^{(l-n-1)/2}$$ whenever $l\leq n+1$.\\

\end{prop}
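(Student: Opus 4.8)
The plan is to follow the proof of Proposition 2.5 almost word for word, but with the $L^2$-kernel estimates of Proposition 2.3 replaced by the $L^\infty$-kernel estimates of Proposition 4.1, and to exploit an asymmetry between differentiation in the $x$ and $y$ variables. Writing $\frac{\partial}{\partial x_i}=\frac{1}{2}(A_i-A_i^*)$ acting on the left variable, $\nabla_x M_j(x,y)$ is a linear combination of the kernels of $A_iM_j$ and $A_i^*M_j$; since $A_iM_j=\delta_iM_j+M_jA_i$ and $A_i^*M_j=\bar{\delta}_iM_j+M_jA_i^*$, it is enough to estimate the kernels of the four operators $M_jA_i,\ M_jA_i^*,\ \delta_iM_j,\ \bar{\delta}_iM_j$. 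On the other hand, writing $\frac{\partial}{\partial y_i}=\frac{1}{2}(A_i-A_i^*)$ acting on the right variable shows that $\nabla_y M_j(x,y)$ is, up to sign, a combination of the kernels of $M_jA_i$ and $M_jA_i^*$ only: right multiplication never produces the commutators $\delta_i$ or $\bar{\delta}_i$. This is exactly why the second estimate will hold on all of $\R^n$, whereas the first will require the cutoff $|x-y|\leq K$.

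For the right-multiplication terms I would treat $M_jA_i=M(S_jA_i)$ (and $M_jA_i^*$) as the term $M_jA_i$ was treated in Proposition 2.5, but inside the $L^\infty$ scheme of Proposition 4.1. Using $A_i\Phi_\mu=(2\mu_i)^{1/2}\Phi_{\mu-e_i}$ and $A_i^*\Phi_\mu=(2\mu_i+2)^{1/2}\Phi_{\mu+e_i}$, on the band $|\mu|\sim 2^N$ the operator $\chi_N(\delta^\gamma\bar{\delta}^\rho(S_jA_i))$ carries an extra factor $2^{N/2}$ compared with $\chi_N(\delta^\gamma\bar{\delta}^\rho S_j)$ in estimate (2.2). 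Feeding this into the Cauchy-Schwarz splitting $\sum_N(\delta^\mu\bar{\delta}^\nu M)\chi_N\cdot\chi_N\delta^\gamma\bar{\delta}^\rho(S_jA_i)$ from the proof of Proposition 4.1, the typical term is bounded by $C\,t_{j+1}\,2^{N(n-l+3)/2}f_{\gamma,\rho}(2^Nt_{j+1})^{1/2}$; the series $\sum_N$ converges for $l\leq n+1$ thanks to the exponential decay of $f_{\gamma,\rho}$, and summation yields $\sup_{x,y}|x-y|^l|M_jA_i(x,y)|\leq C\,t_{j+1}^{(l-n-1)/2}$ for $l\leq n+1$. This proves the second estimate and provides the favourable part of the first.

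For the commutator terms I would estimate $\delta_iM_j$ (and $\bar{\delta}_iM_j$). Since $M$ is of class $C^{n+1}$, the operator $\delta_iM$ satisfies the hypothesis with the improved exponent $2^{N(n/2-|\alpha|-|\beta|-1)}$ for $|\alpha|+|\beta|\leq n$. Writing $\delta_iM_j=(\delta_iM)S_j+M(\delta_iS_j)$ and repeating Proposition 4.1, the extra $2^{-N/2}$ coming from the improved exponent gives $\sup_{x,y}|x-y|^l|\delta_iM_j(x,y)|\leq C\,t_{j+1}^{(l-n+1)/2}$, but only for $l\leq n$, because $\delta_iM$ carries one admissible derivative fewer than $M$. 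For $l\leq n$ this already dominates the required $t_{j+1}^{(l-n-1)/2}$ since $t_{j+1}\leq 1$; to reach $l=n+1$ I would invoke the cutoff and bound $|x-y|^{n+1}\leq K|x-y|^n$ on $\{|x-y|\leq K\}$, turning the $l=n$ bound into $\sup_{|x-y|\leq K}|x-y|^{n+1}|\delta_iM_j|\leq C_K\,t_{j+1}^{1/2}\leq C_K$.

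I expect the main obstacle to be precisely this top order $l=n+1$ for the commutator terms, which is also the source of the local restriction in the $\nabla_x$ estimate: $\delta_iM$ only has $n$ admissible derivatives, so the $L^\infty$-kernel argument cannot be run up to $l=n+1$ and one must trade a power of $|x-y|$ against the cutoff. A secondary point to verify is that the Cauchy-Schwarz splitting behaves well for a joint supremum in $(x,y)$: the first factor is controlled by the $\sup_x$ form of the hypothesis of Theorem 1.7, while the second is controlled by the $\sup_y$ form of estimate (2.2) for $S_j$ and $S_jA_i$, so the two suprema decouple and no new difficulty arises. Collecting the four pieces then yields both displayed inequalities.
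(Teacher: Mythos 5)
Your proposal is correct and follows essentially the same route as the paper's own proof: the same splitting $A_iM_j=\delta_iM_j+M_jA_i$, the same direct $L^\infty$ estimate for $M_jA_i$ via the extra factor $2^{N/2}$ in the analogue of $(2.2)$, the same device of estimating $|x-y|^{l-1}|\delta_iM_j(x,y)|$ and trading the last power of $|x-y|$ against the cutoff $|x-y|\leq K$ at top order, and the same observation that $\nabla_y$ produces only right multiplications $M_jA_i$, $M_jA_i^*$ and hence a global bound. No gaps.
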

\begin{proof} We have already seen that in order to estimate the
kernel of $\frac{\partial}{\partial x_j}M_j(x,y)$, it is enough to
consider the kernels of $A_iM_j$ and $A^*_i M_j$. We will consider
only $A_iM_j$, the other case being similar. As $A_iM_j=\delta_i
M_j+M_j A_i$, we consider $\delta_i M_j$ and $M_jA_i$ separately.
Again we have $$\delta_iM_j=(\delta_iM)S_j+M(\delta_i S_j)$$ and let us first
consider $(\delta_iM)S_j$. We have already seen that for
estimating $|x-y|^l|((\delta_i M)S_j)(x,y)|$, it is enough to
consider
$$\sum_{N=0}^\infty ((\delta^\mu \bar{\delta}^\nu(\delta_i M))\chi_N.\chi_N(\delta^\gamma \bar{\delta}^\rho S_j))(x,y)$$
where $|\mu|+|\nu|+|\gamma|+|\rho|=l$. But for $l=n+1$, we do not
have any estimate for $(\delta^\mu \bar{\delta}^\nu (\delta_i
M))\chi_N$, when $|\mu|+|\nu|=n+1$ as $M$ is only of class $C^{n+1}$.

So, in order to to estimate $|x-y|^l|(\delta_iM)S_j(x,y)|$ we first estimate
$|x-y|^{l-1}|(\delta_iM)S_j(x,y)|$ for $l\leq n+1$ and then use the
fact $|x-y|\leq K$ to get the required estimate. But
$|x-y|^{l-1}|(\delta_i M_j(x,y)|$ can be estimated as in the proof of
Proposition 2.5 which gives us the following
$$\sup_{x,y\in\R^n} |x-y|^{l-1}|(\delta_i M_j)(x,y)|\leq C~ t_{j+1}^{(l-n)/2}.$$ Once we have this
$$\sup_{|x-y|\leq K}|x-y|^l|((\delta_iM)S_j)(x,y)|
\leq K \sup_{x,y\in \R^n}|x-y|^{l-1}|((\delta_i M)S_j)(x,y)|$$
gives the better estimate $ C~ t_{j+1}^{(l-n)/2}.$
Hence we get the required estimate for $(\delta_i M)S_j$.

In a similar way $|x-y|^{l-1}M(\delta_i S_j)$ can be estimated to
get the bound
$$\sup_{x,y\in \R^n}|x-y|^{l-1}|M(\delta_iS_j)(x,y)|\leq C~t_{j+1}\sum_{N=0}^\infty 2^{N(n+2-l)/2} g(2^N t_{j+1})\leq C~t_{j+1}^{(l-n)/2}$$
where $g$ is a rapidly  decreasing function. Thus by a similar argument used  for the case of $(\delta_i M)S_j,$ we
obtain the required estimate. Note that, in this case also we cannot estimate $ |x-y|^l(M(\delta_i S_j))(x,y)$ directly for $l=n+1$
as in the estimation we will then have $C~ t_{j+1}\sum_{N=0}^\infty
2^{N(n+1-l)}g(2^Nt_{j+1})$ which is not good enough.

In order to estimate $M_j A_i$ we need to establish the following
estimate: \be\sup_{x\in \R^n} \int_{\R^n} |(\chi_N(\delta^\gamma
\bar{\delta}^\rho(S_j A_i))(x,y)|^2dy\ee
$$\leq C~ t_{j+1}^2
2^{N(\frac{n}{2}+3-|\gamma|-|\rho|)}f_{\gamma,\rho}(2^N
t_{j+1}).$$ Here, we can calculate $|x-y|^l (M_j A_i)(x,y)$
directly as there is no extra non-commutative derivative falling
on $M_j$. Once again, in order to estimate the above term , it is
enough to estimate the $L^\infty$ norm of
$$\sum_{N=0}^\infty \int_{\R^n}|((\delta^{\mu}\bar{\delta}^\nu M)\chi_N)(x, y'). (\chi_N (\delta^{\gamma}\bar{\delta}^\rho(S_j A_i)))(y',y)dy'.$$
Applying Cauchy-Schwarz inequality the above term is bounded by
\Bea&&\sum_{N=0}^\infty \left(\int_{\R^n}|((\delta^{\mu} \bar{\delta}^\nu
M)\chi_N)(x,y')|^2 dy'\right)^{\frac{1}{2}}
\left(\int_{\R^n}|\chi_N(\delta^{\gamma}\bar{\delta}^\rho(S_j A_i)(y',
y)|^2 dy'\right)^{\frac{1}{2}}\\ &\leq& C~ t_{j+1}\sum_{N=0}^\infty
2^{N(\frac{n}{2}-|\mu|-|\nu|)/2}2^{N(\frac{n}{2}+3-|\gamma|-|\rho|)/2}f_{\gamma,
\rho}(2^N t_{j+1})\\ &\leq& C~ t_{j+1}\sum_{N=0}^\infty
2^{N(n+3-l)/2}f_{\gamma, \rho}(2^N t_{j+1}) \\ &\leq& C~
t_{j+1}^{(l-n-1)/2}.\Eea Note that in the above estimation the case $l=n+1$
 does not create any problem. Hence, we have proved the first
part of the proposition.

For the second part, in order to estimate
$|x-y|^l|\frac{\partial}{\partial y_i}M_j(x,y)|$ it is enough to
estimate $|x-y|^l|M_jA_i(x,y)|$ and $|x-y|^l|M_j A_i^*(x,y)|$ since
for any $f\in C^\infty_c(\R^n)$ it is easy to verify that \Bea
\int_{\R^n} \frac{\partial}{\partial y_i} M_j(x,y) f(y)dy &=&
-\int_{\R^n}M_j(x,y)\frac{\partial f}{\partial y_i}(y)dy\\
&=& \frac{1}{2} (M_j(A_i- A^*_i))f(x).\Eea But then we have already
estimated $|x-y|^l|M_j A_i(x,y)|$ in the first part. Note that, as
we have directly estimated $|x-y|^l|M_j A_i(x,y)|$, we did not use
the assumption $|x-y| \leq A$ for estimating $M_jA_i(x,y)$. This
observation allows us to take supremum over all $x$ and $y$ in
$\R^n$ for the second part.

 \end{proof}
 \begin{cor}Under the hypothesis of Theorem 1.7 we have the estimates
$$\sup_{|x-y|\leq A} |x-y|^{n+\frac{1}{2}} |\nabla_x M_j(x,y)||\leq C
t_{j+1}^{-1/4}$$ and
$$\sup_{x,y\in \R^n} |x-y|^{n+\frac{1}{2}} |\nabla_y M_j(x,y)||\leq C
t_{j+1}^{-1/4}.$$
\end{cor}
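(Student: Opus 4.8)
The plan is to obtain the half-integer exponent $n+\frac{1}{2}$ by interpolating the two integer-exponent estimates already furnished by Proposition 4.3, in exactly the way Corollary 2.4 was deduced from Proposition 2.3. First I would specialize Proposition 4.3 to the two endpoints $l=n$ and $l=n+1$, both of which are admissible since they satisfy $l\leq n+1$. Taking $l=n+1$ (with the restriction region $|x-y|\leq A$) gives $\sup_{|x-y|\leq A}|x-y|^{n+1}|\nabla_x M_j(x,y)|\leq C$, while taking $l=n$ gives $\sup_{|x-y|\leq A}|x-y|^{n}|\nabla_x M_j(x,y)|\leq C~t_{j+1}^{-1/2}$.

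The key observation is the elementary pointwise factorization
$$|x-y|^{n+\frac{1}{2}}|\nabla_x M_j(x,y)|=\left(|x-y|^{n}|\nabla_x M_j(x,y)|\right)^{\frac{1}{2}}\left(|x-y|^{n+1}|\nabla_x M_j(x,y)|\right)^{\frac{1}{2}},$$
which holds for every fixed pair $(x,y)$ because $\frac{n}{2}+\frac{n+1}{2}=n+\frac{1}{2}$ and $\frac{1}{2}+\frac{1}{2}=1$. Taking the supremum over $|x-y|\leq A$ and using $\sup(fg)\leq(\sup f)(\sup g)$ for nonnegative $f,g$, the right-hand side is bounded by the product of the square roots of the two endpoint estimates, namely $(C~t_{j+1}^{-1/2})^{1/2}\cdot C^{1/2}=C~t_{j+1}^{-1/4}$. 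This yields the first asserted inequality. For the second inequality I would run the identical argument using the $\nabla_y$ estimates of Proposition 4.3; since those are stated with the supremum taken over all $x,y\in\R^n$ and carry no restriction of the form $|x-y|\leq K$, the resulting bound likewise holds for $\sup_{x,y\in\R^n}$.

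I do not expect a genuine obstacle at this stage: all of the analytic work is already contained in Proposition 4.3, and the present step is a purely multiplicative (geometric-mean) interpolation rather than the $L^2$ Cauchy-Schwarz interpolation used in Corollary 2.4. The one point worth flagging is that, unlike Corollary 2.4, no distinction between even and odd $n$ is needed here: because Proposition 4.3 produces the pointwise weight $|x-y|^l$ for every integer $l\leq n+1$, the target exponent $n+\frac{1}{2}$ always lies midway between the two consecutive admissible integers $n$ and $n+1$, so a single interpolation handles both parities uniformly.
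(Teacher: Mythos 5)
Your proof is correct and matches the paper's (implicit) argument: the paper states this corollary without proof precisely because it follows from Proposition 4.3 by the pointwise geometric-mean interpolation between $l=n$ and $l=n+1$ that you describe. Your observation that this is a pointwise rather than an $L^2$ (Cauchy--Schwarz) interpolation, and that no parity distinction is needed, is also accurate.
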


We are now in a position to prove Theorem 1.7. By Theorem 1.8 we
 have already proved that M is bounded on $L^p$ for $2\leq
 p<\infty$. Now we will show that $T_N$ satisfies weak (1,1)
 estimate with a uniform constant. We shall show that the kernel $M_j$ satisfies the following estimate:
$$ |x-z|^{n+\frac{1}{2}} |M_j(x,y)-M_j(x,z)|
\leq C~|y-z|^{1/2}
\min\left\{\frac{t_{j+1}^{1/4}}{|y-z|^{1/2}},
\frac{|y-z|^{1/2}}{t_{j+1}^{1/4}}\right\}$$
 whenever $|x-z|>2|y-z|$ and
$|x-y|> 2|y-z|$. Once we have the above estimate, we can prove that
$$|x-z|^{n+\frac{1}{2}}|K_N(x,y)-K_N(x,z)|\leq C~|y-z|^{1/2}$$
where $C$ is independent of $N$. Hence, $T_N$ is of weak type (1,1) (See
Theorem 5.10 of \cite{D}) with a uniform constant.\\

From Corollary 4.2 we get the estimate
$$|x-z|^{n+\frac{1}{2}}|M_j(x,y)-M_j(x,z)|\leq C |y-z|^{1/2}\frac{t_{j+1}^{1/4}}{|y-z|^{1/2}}.$$
Again , using mean value theorem
$$|x-z|^{n+\frac{1}{2}}|M_j(x,y)-M_j(x,z)|$$
$$\leq |x-z|^{n+\frac{1}{2}}|y-z||\nabla_y M_j(x, \tilde{z})|$$
where $\tilde{z}$ is a point on the line joining $z$ and $y$. Since
$|x-z|$ and $|x- \tilde{z}|$ are comparable, using Corollary 4.4 the
above can be dominated by $C~t_{j+1}^{-1/4}|y-z|$. Combining the above two we get the required estimate.\\

In order to prove Theorem 1.9 we need the following analogue of
Theorem 3.1:
\begin{thm}
Let $M$ satisfies the hypothesis of Theorem 1.9. Then we have
$$\Lambda^{\sharp}(\sum_{j=1}^N M_j f) \leq C~ (\Lambda_s f(x)+ \Lambda (\Lambda f)(x))$$
for $f\in L^p(\R^n)$ where $s<p$, $1<s<\infty$, $N\in \N$ and $C$
is independent of $N.$
\end{thm}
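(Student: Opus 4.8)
The plan is to follow the proof of Theorem 3.1 almost verbatim, replacing the $L^{2}$-kernel bounds (Corollary 2.4 and Proposition 2.5) used there by the pointwise $L^{\infty}$-kernel bounds available under the stronger hypothesis of Theorem 1.9, namely Corollary 4.2 and Corollary 4.4. As before, write $T_{N}=\sum_{j=0}^{N}M_{j}$ with kernel $K_{N}$, fix a cube $Q$ containing $x$, put $f_{1}=f\chi_{2Q}$, $f_{2}=f-f_{1}$, and use the same cutoff $\phi\in C_{0}^{\infty}(\R^{n})$, supported in $|x|\le 1$ and equal to $1$ on $|x|\le\frac{1}{2}$, to split $T_{N}f(y)=T_{N}f_{1}(y)+T_{N}^{(1)}f_{2}(y)+T_{N}^{(2)}f_{2}(y)$. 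Choosing the constant $a=T_{N}^{(1)}f_{2}(x)$ in the definition of $\Lambda^{\sharp}$, it suffices to estimate the three averages $\frac{1}{|Q|}\int_{Q}|T_{N}f_{1}|$, $\frac{1}{|Q|}\int_{Q}|T_{N}^{(1)}f_{2}(y)-T_{N}^{(1)}f_{2}(x)|\,dy$ and $\frac{1}{|Q|}\int_{Q}|T_{N}^{(2)}f_{2}|$ separately, uniformly in $N$.

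For the first term I would invoke the uniform (in $N$) boundedness of $T_{N}=Me^{-t_{N+1}H}$ on $L^{s}(\R^{n})$ for the given $s$; for $1<s\le 2$ this comes from interpolating the uniform weak type $(1,1)$ bound established in the proof of Theorem 1.7 with the $L^{2}$ bound, and for $s>2$ from the proof of Theorem 1.8. Consequently $\frac{1}{|Q|}\int_{Q}|T_{N}f_{1}|\le\big(\frac{1}{|Q|}\int_{\R^{n}}|T_{N}f_{1}|^{s}\big)^{1/s}\le C\big(\frac{1}{|Q|}\int_{2Q}|f|^{s}\big)^{1/s}\le C\,\Lambda_{s}f(x)$, which is the source of the $\Lambda_{s}f$ term on the right-hand side.

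The second term is handled through a pointwise H\"{o}rmander condition in the first variable. Writing $M_{j}^{(1)}(y,z)-M_{j}^{(1)}(x,z)=(M_{j}(y,z)-M_{j}(x,z))\phi(y-z)+M_{j}(x,z)(\phi(y-z)-\phi(x-z))$ and applying the mean value theorem, the first summand is controlled by $|x-y|\,|\nabla_{x}M_{j}(\tilde{x},z)|$ and the second by $c\,|x-y|\,|M_{j}(x,z)|$. Since $\phi$ confines $z$ to $|x-z|\le 1$, the domain restriction in the first estimate of Corollary 4.4 is satisfied, and Corollaries 4.2 and 4.4 yield $|x-z|^{n+1/2}|M_{j}^{(1)}(y,z)-M_{j}^{(1)}(x,z)|\le C|x-y|^{1/2}\min\{t_{j+1}^{1/4}|x-y|^{-1/2},\,t_{j+1}^{-1/4}|x-y|^{1/2}\}$, exactly as in the weak $(1,1)$ argument of Theorem 1.7 but with the roles of the two kernel variables interchanged. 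Summing over $j$ by the same convergent series used there gives $|x-z|^{n+1/2}|K_{N}^{(1)}(y,z)-K_{N}^{(1)}(x,z)|\le C|x-y|^{1/2}$ uniformly in $N$, and the dyadic annuli decomposition of $\R^{n}\setminus 2Q$ then bounds the second average by $C\,\Lambda f(x)\le C\,\Lambda_{s}f(x)$, using $\Lambda f\le\Lambda_{s}f$ for $s>1$.

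For the third term I would use the global pointwise bound $|M_{j}(y,z)|\le C\,t_{j+1}^{1/4}|y-z|^{-n-1/2}$ from Corollary 4.2 on the region $|y-z|\ge\frac{1}{2}$ where $K_{N}^{(2)}$ is supported. This gives $|M_{j}^{(2)}f_{2}(y)|\le C\,t_{j+1}^{1/4}\int_{|y-z|\ge 1/2}|y-z|^{-n-1/2}|f(z)|\,dz\le C\,t_{j+1}^{1/4}\Lambda f(y)$, and since $\sum_{j}t_{j+1}^{1/4}=\sum_{j}2^{-(j+1)/4}<\infty$, averaging over $Q$ produces $\frac{1}{|Q|}\int_{Q}|T_{N}^{(2)}f_{2}|\le C\,\Lambda(\Lambda f)(x)$. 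Adding the three contributions gives the claimed inequality. The main obstacle I anticipate is the second term: establishing the first-variable pointwise H\"{o}rmander estimate uniformly in $N$, and in particular verifying that the restriction $|x-y|\le A$ in Corollary 4.4 causes no loss because the cutoff $\phi$ already localizes the kernel to $|x-z|\le 1$. The first and third terms are then routine once the pointwise kernel bounds of Section 4 replace the $L^{2}$ bounds used in Section 3.
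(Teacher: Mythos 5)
Your proposal is correct and is precisely the argument the paper intends: the paper gives no details for Theorem 4.5 beyond ``similar to Theorem 3.1, details left to the reader,'' and you have carried out exactly that adaptation, replacing the $L^2$-kernel bounds of Section 2 by the pointwise bounds of Corollaries 4.2 and 4.4, with the $\Lambda_s f$ term correctly traced to the uniform $L^s$ boundedness of $T_N$ (via the weak $(1,1)$ estimate for $s\le 2$) and the cutoff localization correctly invoked to handle the restriction $|x-y|\le A$ in the gradient estimate.
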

The proof of the theorem is similar to that  of Theorem 3.1. Once Theorem
4.5 is proved it is  routine to prove Theorem 1.9. We leave the details to the reader.\\

\section[Hermite pseudo-multipliers]{Hermite pseudo-multipliers} In
this section we prove the results concerning Hermite
pseudo-multipliers. Given a bounded function $a$ on $\R^n\times
\R$ which is $2\pi$-periodic in the second variable consider
$$\hat{a}(x, H)f(x)=\sum_{k=0}^\infty \hat{a}(x, 2k+n)P_k f(x)$$
where $\hat{a}(x,k)=\int_0^{2\pi} a(x,t) e^{-i kt}dt$. If we let
$$e^{-it H}f=\sum_{k=0}^\infty e^{-i t(2k+n)}P_k f$$
stand for the unitary group generated by $H$, we have
$$\hat{a}(x,H)f(x)=\int_0^{2\pi} a(x,t) e^{-i t H}f(x)dt.$$
Consequently $\hat{a}(x,H)$ is bounded on $L^2(\R^n)$ provided
$a(x,t)\in L^\infty (\R^n\times \R)$. This proves the $L^2$ case
of Corollary 1.5.\\

Actually, as indicated in the introduction with $
Da(x,t)=\frac{\partial}{\partial t}((e^{-it}-1)a(x,t))$, it
follows that
$$i k \Delta \hat{a}(x,k)=\int_0^{2\pi} Da(x, t) e^{-i tk}dt.$$
Consequently, the hypothesis of Theorem 1.3 on $\hat{a}(x,k)$ will
be satisfied provided, $D^ja$ and $D^j \frac{\partial}{\partial
x_i}a$ belong to $L^\infty(\R^n\times \R)$ for all $j=0, 1, 2,
\cdots, [\frac{n}{2}]+1, i = 1,2, \cdots, n$. Hence Corollary 1.5
follows immediately
once we prove Theorem 1.3.\\

We prove Theorem 1.3 by showing that under the hypothesis on
$m(x,k)$, the operator $m(x, H)$ satisfies the kernel estimates
used in the proof of Theorem 1.8. A close examination of the proof of
Theorem 1.8 shows that the following variant of the theorem is
true.\\

Let us introduce the notation $D_jM=[x_j, M]$ for the commutator
of $M$ with the operator of multiplication by $x_j$. For a
multi-index $\alpha$, we let $D^\alpha M=D_1^{\alpha_1}\cdots
D_n^{\alpha_n}M.$ Note that $2D_j=\delta_j+\bar{\delta_j}$\\

\begin{thm}
Let $M$ be a bounded linear operator on $L^2(\R^n)$ such that
$D^\alpha M$, $D^\beta \delta_j M$ are all bounded on $L^2(\R^n)$
for $j=1, 2, \cdots, n$ and $|\alpha|\leq [\frac{n}{2}]+1$,
$|\beta|\leq [\frac{n}{2}]$. Assume that
$$\sup_{x\in \R^n} \int_{\R^n}|((D^\alpha M)\chi_N)(x,y)|^2 dy\leq C~ 2^{N(\frac{n}{2}-|\alpha|)},$$
$$\sup_{x\in \R^n}\int_{\R^n}|((D^\beta \delta_j M)\chi_N) (x,y)|^2dy\leq C~ 2^{N(\frac{n}{2}-|\beta|)}$$
for $|\alpha|\leq [\frac{n}{2}]+1$ and $|\beta|\leq
[\frac{n}{2}]$. Then for any $p>2$, $w\in A_{p/2}$ we have
$$\int_{\R^n}|Mf(x)|^p w(x) dx\leq C~ \int_{\R^n} |f(x)|^p w(x)dx.$$
\end{thm}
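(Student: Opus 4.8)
The plan is to observe that the proof of Theorem 1.8 invokes the hypotheses on $M$ at exactly one place — the kernel estimates for $M_j$ recorded in Propositions 2.3 and 2.5 together with Corollary 2.4 — and that everything downstream (the sharp maximal function estimate of Theorem 3.1, the passage through Lemma 3.2, and the uniform boundedness argument) depends only on those kernel bounds, not on the precise form of the hypotheses. Consequently it suffices to re-derive, under the present hypotheses phrased in terms of the commutators $D^\alpha M$ and $D^\beta\delta_j M$, the two families of kernel estimates
\[
\sup_{x}\int_{\R^n}|x-y|^{2l}|M_j(x,y)|^2\,dy\le C\,t_{j+1}^{\,l-\frac n2},\qquad l\le[\tfrac n2]+1,
\]
and the gradient estimate of Proposition 2.5. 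Once these are in place the remainder of the argument for Theorem 1.8 carries over word for word. The relation $2D_j=\delta_j+\bar\delta_j$ noted just above the statement is precisely the device that lets the $D$-commutators be fed into the machinery of Section 2.

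First I would re-prove the analogue of Proposition 2.3. The key point is that $(x_i-y_i)M_j(x,y)$ is the kernel of $D_iM_j=[x_i,M_j]$, so that $\prod_i(x_i-y_i)^{\alpha_i}M_j(x,y)$ is the kernel of $D^\alpha M_j$; since $|x-y|^{2l}$ is a finite linear combination of monomials $\prod_i(x_i-y_i)^{2\gamma_i}$ with $|\gamma|=l$, it is enough to bound $\int_{\R^n}|(D^\alpha M_j)(x,y)|^2\,dy$ for $|\alpha|=l\le[\tfrac n2]+1$. The commutators $D_i$ satisfy the Leibniz rule $D_i(AB)=(D_iA)B+A(D_iB)$, so $D^\alpha M_j=D^\alpha(MS_j)=\sum_{\mu+\gamma=\alpha}\binom{\alpha}{\mu}(D^\mu M)(D^\gamma S_j)$. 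Splitting each summand as $\sum_N (D^\mu M)\chi_N\cdot\chi_N(D^\gamma S_j)$ and arguing exactly as in Proposition 2.3, I would bound $(D^\mu M)\chi_N$ by the first hypothesis and $\chi_N(D^\gamma S_j)$ by Proposition 2.1, the latter being legitimate because $D^\gamma S_j$ is a linear combination of the $\delta^a\bar\delta^b S_j$ with $|a|+|b|=|\gamma|$. Summing the resulting series, whose convergence is guaranteed by $l\le[\tfrac n2]+1$ and the rapid decay of the $f_{\gamma,\rho}$, yields the displayed bound, and Corollary 2.4 follows by the same interpolation as before.

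Next I would re-prove the analogue of Proposition 2.5. Writing $\partial_{x_i}M_j$ as a combination of $A_iM_j$ and $A_i^*M_j$, and $A_iM_j=\delta_iM_j+M_jA_i$, I would treat the two pieces separately. For $\delta_iM_j=(\delta_iM)S_j+M(\delta_iS_j)$ I would extract powers of $|x-y|$ through the $D$-commutators as above, now invoking the \emph{second} hypothesis on $D^\beta\delta_iM$ for $|\beta|\le[\tfrac n2]$ together with Proposition 2.1; this controls $\int|x-y|^{2l}|\delta_iM_j|^2\,dy$ for $l\le[\tfrac n2]$, and after using $|x-y|\le K$ this is enough for the $\delta_iM_j$ contribution. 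For $M_jA_i=M(S_jA_i)$ the extra factor $A_i$ acts as a weighted shift on the Hermite functions and produces an additional $2^{N/2}$, so that, as in Proposition 2.5, $\int|x-y|^{2l}|M_jA_i|^2\,dy\le C\,t_{j+1}^{\,l-\frac n2-1}$; choosing $l=\frac{n+1}2$ when $n$ is odd and interpolating between $l=\frac n2$ and $l=\frac n2+1$ when $n$ is even gives the required bound $\sup_{|x-y|\le K}\int|x-y|^{n+1}|\nabla_xM_j|^2\,dy\le C_K\,t_{j+1}^{-1/2}$.

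With both kernel estimates in hand, Theorem 3.1 and its proof apply verbatim to give $\Lambda^\sharp(T_Nf)(x)\le C(\Lambda_2 f(x)+\Lambda(\Lambda_2 f)(x))$ uniformly in $N$, and the weighted conclusion then follows from Lemma 3.2 and the good-$\lambda$ machinery exactly as in the proof of Theorem 1.8 — the only verification needed being that $\Lambda_d(T_Nf)\in L^{p_0}(\R^n,w)$ on $C_0^\infty$, which again uses only the decay of $K_N$ coming from the Corollary 2.4 analogue. I expect the main obstacle to be the bookkeeping in the gradient estimate: one must check that splitting $\nabla_xM_j$ into the $\delta_iM_j$ and $M_jA_i$ pieces, with only $[\tfrac n2]$ commutators available on $\delta_iM$ and no extra decay built into the second hypothesis, still reproduces the $t_{j+1}^{-1/2}$ bound — the binding contribution being the $M_jA_i$ term — whereas the Proposition 2.3 part is essentially a transcription once the identification of $D^\alpha$ with multiplication of the kernel by $\prod_i(x_i-y_i)^{\alpha_i}$ is made.
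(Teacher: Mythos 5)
Your proposal is correct and follows essentially the same route as the paper: the authors likewise reduce Theorem 5.1 to re-establishing the kernel estimates of Propositions 2.3 and 2.5 under the commutator hypotheses, using that $(x-y)^\alpha M_j(x,y)$ is the kernel of $D^\alpha M_j$, that $D_j$ obeys the Leibniz rule so $D^\alpha(MS_j)$ splits into $(D^\beta M)(D^\gamma S_j)$, and that $D^\gamma S_j$ expands into $\delta^\mu\bar\delta^\nu S_j$ so Proposition 2.1 applies, after which the argument of Theorem 1.8 is repeated verbatim. Your write-up is in fact more detailed than the paper's own sketch, and the points you flag as needing verification (the bookkeeping in the gradient estimate, with the $M_jA_i$ term binding) are handled exactly as you describe.
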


Observe that $D_jM=[x_j, M]$ are also derivations and hence
$D^\alpha(M S_j)$ will be a linear combination of $(D^\beta
M)(D^\gamma S_j)$, $|\beta|+|\gamma|=|\alpha|$. In proving
Proposition 2.3 we have started with the observation that
$(x-y)^\alpha M_j(x,y)$ is the kernel of $D^\alpha M_j$. The
derivatives $D^\gamma S_j$ can be written in terms of the
derivatives $\delta^\mu$ and $\bar{\delta}^\nu$ and hence we have
an analogue of Proposition 2.1 for $D^\alpha S_j$. The hypothesis
on $D^\alpha M$ can be used instead of the hypothesis on $
\delta^\alpha \bar{\delta}^\beta M$. Proposition 2.5 is similarly
proved
using the assumption on $D^\beta \delta_j M.$\\

Once we have the kernel estimates stated in Proposition 2.3 and
2.5 we can proceed as before to prove Theorem 5.1.\\

When $M=m(x,H)$ its kernel is given by
$$M(x,y)=\sum_{k=0}^\infty m(x, 2k+n)\Phi_k(x,y)$$
where $\Phi_k(x,y)=\sum_{|\alpha|=k}\Phi_\alpha(x)\Phi_\alpha (y)$
are the kernels of $P_k$. The kernel of $D^\alpha M$ is then
$(x-y)^\alpha M(x,y)$ and so the kernel of $(D^\alpha M)\chi_N$ is
given by
$$((D^\alpha M)\chi_N)(x,y)=\int_{\R^n} (x-y')^{\alpha}M(x,y')\left(\sum_{2^{N-1}\leq 2k+n< 2^N} \Phi_k(y', y)\right) dy'.$$
Since $\int_{\R^n} \Phi_{k}(x,y')\Phi_j(y',y)dy'=\delta_{kj}
\Phi_k(x,y)$ we have
$$(M\chi_N)(x,y)=\sum_{2^{N-1}\leq 2k+n< 2^N} m(x, 2k+n) \Phi_k(x,y).$$
which gives the estimate
$$\int_{\R^n}|(M\chi_N)(x,y)|^2dy\leq C~ \sum_{2^{N-1}\leq 2k+n< 2^N} (m(x, 2k+n))^2 \Phi_k(x,x). $$
If we make use of the estimate $\Phi_k(x,x)\leq C~
k^{\frac{n}{2}-1}$ proved in \cite{T} (see Lemma 3.2.2) we obtain
$$\int_{\R^n}|(M\chi_N)(x,y)|^2dy\leq C~ 2^{N\frac{n}{2}}$$
since $m$ is assumed to be a bounded function of $x$ and $k$.\\

In order to estimate the $L^2$ norm of the kernel of $(D^\alpha
M)\chi_N$ we need to get an expression for its kernel. Let us
write $A^*_j=-\frac{\partial}{\partial x_j}+x_j$ and
$B_j^*=-\frac{\partial}{\partial y_j}+y_j$ each acting in the $x$
and $y$ variables respectively.
\begin{lem}
$$(x-y)^\alpha M(x,y)=\sum C_{\beta, \gamma} \left ( \sum_{k=0}^\infty \Delta^{|\gamma|}m(x, 2k+n)(B^*-A^*)^\beta \Phi_k(x,y)\right )$$
where the sum is extended over all $\beta, \gamma$ satisfying
$2\gamma_j-\beta_j= \alpha_j$, $\gamma_j\leq \alpha_j$.
\end{lem}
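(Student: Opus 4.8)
The plan is to prove the identity by induction on $|\alpha|$, the two engines being a pair of intertwining relations between the $x$- and $y$-variable ladder operators, and a discrete summation by parts in $k$. Throughout I write $A_j,A_j^*$ for the operators acting in the $x$-variable and $B_j,B_j^*$ for the corresponding operators in the $y$-variable. Starting from $A_j^*\Phi_\mu=(2\mu_j+2)^{1/2}\Phi_{\mu+e_j}$, $A_j\Phi_\mu=(2\mu_j)^{1/2}\Phi_{\mu-e_j}$ and the definition $\Phi_k(x,y)=\sum_{|\mu|=k}\Phi_\mu(x)\Phi_\mu(y)$, a direct reindexing of the sum over $\mu$ gives the two identities
\[
A_j\Phi_k(x,y)=B_j^*\Phi_{k-1}(x,y),\qquad A_j^*\Phi_k(x,y)=B_j\Phi_{k+1}(x,y).
\]
These say that a lowering in $x$ is a raising in $y$ on the preceding eigenspace and vice versa; they are the only facts about the kernel that I will need.

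For the base case $|\alpha|=1$ I would use that the derivative terms cancel in $2x_j=A_j+A_j^*$ (in $x$) and in $2y_j=B_j+B_j^*$ (in $y$), so that as a multiplication operator $2(x_j-y_j)=(A_j+A_j^*)-(B_j+B_j^*)$. Applying this to $\Phi_k$ and invoking the two intertwining identities sends each of the four ladder terms onto a neighbouring eigenspace. Multiplying by $m(x,2k+n)$, summing over $k$, and shifting the summation index to bring the neighbouring eigenspace back to $k$ (the boundary term vanishing since $\Phi_{-1}=0$) collapses the four terms into
\[
(x_j-y_j)M(x,y)=\tfrac12\sum_{k=0}^\infty\Delta m(x,2k+n)\,(B_j^*-A_j^*)\Phi_k(x,y),
\]
where $\Delta m(x,2k+n)=m(x,2(k+1)+n)-m(x,2k+n)$. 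This is exactly the asserted formula with $\alpha=\beta=\gamma=e_j$.

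For the inductive step I would apply multiplication by $(x_i-y_i)$ to a generic summand $\Delta^{|\gamma|}m(x,2k+n)\,R^\beta\Phi_k$, where $R_j:=B_j^*-A_j^*$, again writing $2(x_i-y_i)=(A_i+A_i^*)-(B_i+B_i^*)$. The key algebra is that the raising operators $A_i^*,B_i^*$ commute with every factor of $R_j$, while the lowering operators satisfy $[A_i,R_j]=-2\delta_{ij}$ and $[B_i,R_j]=2\delta_{ij}$ (from $[A_i,A_j^*]=2\delta_{ij}=[B_i,B_j^*]$ together with the commuting of $x$- and $y$-operators). Pushing $A_i-B_i$ through $R^\beta$ therefore produces two contributions: a pass-through term in which the lowering operators reach $\Phi_k$ and, via the intertwining identities and a summation by parts as in the base case, raise $\beta$ to $\beta+e_i$ and upgrade $\Delta^{|\gamma|}$ to $\Delta^{|\gamma|+1}$; and a commutator term $-4\beta_i\,R^{\beta-e_i}\Phi_k$ that lowers $\beta$ to $\beta-e_i$ while leaving $\Delta^{|\gamma|}$ unchanged. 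One then checks that both operations send a summand obeying $2\gamma_j-\beta_j=\alpha_j$, $\gamma_j\le\alpha_j$ to summands obeying the same relations with $\alpha_i$ raised by one, since $2(\gamma_i+1)-(\beta_i+1)=2\gamma_i-\beta_i+1$ and $2\gamma_i-(\beta_i-1)=2\gamma_i-\beta_i+1$. Thus the index constraints are reproduced and the induction closes, with the constants $C_{\beta,\gamma}$ accumulating the factors $\tfrac12$ and $-4\beta_i$.

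The main obstacle is the bookkeeping in the inductive step: computing the commutator of $A_i-B_i$ with the non-commutative power $R^\beta$ correctly (this is what produces the combinatorial factor $\beta_i$), and matching the index shift produced by the summation by parts with the raising of $\beta$ so that both families of terms align with the single constraint $2\gamma_j-\beta_j=\alpha_j$. A secondary point requiring care is the legitimacy of the rearrangements: because $m$ is bounded and each operator preserves the finite-dimensional eigenspaces, the sums may be manipulated termwise, and every boundary term in the summations by parts vanishes since $\Phi_{-1}=0$.
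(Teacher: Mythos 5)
Your argument is correct and is essentially the paper's own: the authors simply defer to Lemma 3.2.3 of \cite{T}, which is proved by exactly this induction on $|\alpha|$ using the intertwining relations $A_j\Phi_k=B_j^*\Phi_{k-1}$, $B_j\Phi_k=A_j^*\Phi_{k-1}$ and an Abel summation in $k$. You have also correctly supplied the one point needed to adapt that proof to $x$-dependent symbols, namely that $(x_i-y_i)$ is a multiplication operator and hence commutes with multiplication by $\Delta^{|\gamma|}m(x,2k+n)$, so the finite differences fall only on the spectral index.
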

This lemma has been proved in \cite{T} when $m(x,k)$ is assumed to be 
independent of $x$ (see Lemma 3.2.3 in \cite{T}). The same proof
can be modified to prove the above lemma.
\begin{prop}
Let $M=m(x,H)$ where $m(x, k)$ satisfies the hypothesis of Theorem
1.3. Then the kernels of $D^\alpha M$ and $D^\alpha \delta_j M$
satisfy the estimates stated in Theorem 5.1. Consequently, $M$
extends to a bounded operator on $L^p(\R^n, wdx)$, $w\in A_{p/2},
p>2$.
\end{prop}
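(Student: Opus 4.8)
The plan is to verify the two families of kernel estimates demanded by Theorem~5.1 and then simply quote that theorem. The essential tool is Lemma~5.2, which exhibits the kernel $(x-y)^{\alpha}M(x,y)$ of $D^{\alpha}M$ as a finite linear combination of the expressions
$$\sum_{k=0}^{\infty}\Delta^{|\gamma|}m(x,2k+n)\,(B^{*}-A^{*})^{\beta}\Phi_{k}(x,y),\qquad 2\gamma-\beta=\alpha,\ \ \gamma\le\alpha .$$
First I would produce the kernel of $(D^{\alpha}M)\chi_{N}$ by composing on the right with the projection, i.e. by integrating this expansion against $\chi_{N}(y',y)=\sum_{2^{N-1}\le 2k+n<2^{N}}\Phi_{k}(y',y)$ in the intermediate variable $y'$. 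Splitting $(B^{*}-A^{*})^{\beta}$ into monomials $(A^{*})^{\sigma}$ (acting in $x$) times $(B^{*})^{\beta-\sigma}$ (acting in $y'$), using the creation rule $A_{i}^{*}\Phi_{\mu}=(2\mu_{i}+2)^{1/2}\Phi_{\mu+e_{i}}$ and the orthonormality $\int_{\R^{n}}\Phi_{\mu'}(y')\Phi_{\nu}(y')\,dy'=\delta_{\mu',\nu}$, the intermediate integral collapses. A short index count shows that the surviving terms are indexed by the frequencies $\nu$ lying in the \emph{single} dyadic block $2^{N-1}\le 2|\nu|+n<2^{N}$.

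With this expansion in hand I would compute $\int_{\R^{n}}|((D^{\alpha}M)\chi_{N})(x,y)|^{2}\,dy$ by orthonormality of the $\Phi_{\nu}(y)$. Three ingredients enter. The finite-difference hypothesis of Theorem~1.3 gives $|\Delta^{|\gamma|}m(x,2k+n)|\le C(2k+n)^{-|\gamma|}\sim C\,2^{-N|\gamma|}$, which is legitimate since $|\gamma|\le|\alpha|\le[\frac{n}{2}]+1$. The creation/annihilation factors produced by $(B^{*}-A^{*})^{\beta}$ contribute at most $C\,2^{N|\beta|/2}$. Because $2|\gamma|-|\beta|=|\alpha|$, these two powers combine, \emph{independently of the particular $\beta,\gamma$}, to the single factor $2^{-N|\alpha|/2}$. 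Finally, summing the squares of the $x$-Hermite functions over the block and invoking the pointwise estimate $\sum_{|\mu|=k}\Phi_{\mu}(x)^{2}\le C(2k+n)^{\frac{n}{2}-1}$ (Lemma~3.2.2 of \cite{T}) over the $\sim 2^{N}$ values of $k$ in the block produces a factor $C\,2^{N\frac{n}{2}}$. Multiplying the three contributions yields $\sup_{x}\int_{\R^{n}}|((D^{\alpha}M)\chi_{N})(x,y)|^{2}\,dy\le C\,2^{N(\frac{n}{2}-|\alpha|)}$, which is exactly the first estimate of Theorem~5.1.

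For the second family I would first split $\delta_{j}M=[M,A_{j}]$ at the level of kernels. A direct computation gives
$$\delta_{j}M(x,y)=\sum_{k=0}^{\infty}m(x,2k+n)\,(B_{j}^{*}-A_{j})\Phi_{k}(x,y)-\sum_{k=0}^{\infty}\partial_{x_{j}}m(x,2k+n)\,\Phi_{k}(x,y),$$
that is, a ``ladder'' part in which the symbol $m$ is untouched, plus the genuine pseudo-multiplier $-(\partial_{x_{j}}m)(x,H)$. After applying $D^{\beta}$ and $\chi_{N}$, the ladder part is treated exactly as above, but it carries one extra creation/annihilation factor together with one extra finite difference of $m$ (this is the commutator structure already recorded in Mauceri's Lemma~2.2); since $2^{N/2}\cdot 2^{-N}=2^{-N/2}$, it obeys an even better bound than required, and it consumes the hypothesis on $m$ only up to $|\gamma|+1\le[\frac{n}{2}]+1$ differences. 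The remaining term $-(\partial_{x_{j}}m)(x,H)$ is of the same type as $M$ with $m$ replaced by $\partial_{x_{j}}m$; repeating the previous paragraph verbatim, now with Lemma~5.2 applied to $(\partial_{x_{j}}m)(x,H)$ and the hypothesis $\sup_{x}|\Delta^{i}\partial_{x_{j}}m(x,k)|\le C(2k+n)^{-i}$ for $i\le[\frac{n}{2}]$, gives $\sup_{x}\int_{\R^{n}}|((D^{\beta}\delta_{j}M)\chi_{N})(x,y)|^{2}\,dy\le C\,2^{N(\frac{n}{2}-|\beta|)}$ for $|\beta|\le[\frac{n}{2}]$. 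This is precisely where the separate assumption on the $x$-derivatives of $m$ is used. Both estimates of Theorem~5.1 being in place (the attendant $L^{2}$-boundedness of $D^{\alpha}M$ and $D^{\beta}\delta_{j}M$ following from the same block analysis together with the almost-orthogonality of the $\chi_{N}$), the asserted boundedness of $M$ on $L^{p}(\R^{n},w\,dx)$ for $p>2$, $w\in A_{p/2}$, is immediate from Theorem~5.1.

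The one genuinely delicate step is the index bookkeeping after integrating against $\chi_{N}$: one must check carefully that the surviving frequencies $\nu$ fill exactly one dyadic block -- so that the bound on $\Delta^{|\gamma|}m$ is applied at the correct scale $2^{-N|\gamma|}$ -- and that the identity $2|\gamma|-|\beta|=|\alpha|$ collapses the creation/annihilation factors and the finite-difference decay to the clean power $2^{-N|\alpha|/2}$. Once these two points are secured, the pointwise Hermite bound does the rest, and the passage to $\delta_{j}M$ is only a matter of isolating the extra term $(\partial_{x_{j}}m)(x,H)$.
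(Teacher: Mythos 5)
Your argument follows the paper's proof essentially verbatim: you invoke Lemma 5.2, integrate against $\chi_N$, collapse the intermediate integral by orthogonality of the Hermite functions, and combine the finite-difference decay $2^{-N|\gamma|}$, the creation-operator growth $2^{N|\beta|/2}$, and the pointwise bound $\sum_{|\mu|=k}\Phi_\mu(x)^2\le C(2k+n)^{\frac{n}{2}-1}$ via the relation $2|\gamma|-|\beta|=|\alpha|$ to obtain $2^{N(\frac{n}{2}-|\alpha|)}$, exactly as the paper does. Your explicit splitting of $\delta_jM$ into the ladder part $\sum_k m(x,2k+n)(B_j^*-A_j)\Phi_k$ (which gains an extra $2^{-N/2}$) plus $-(\partial_{x_j}m)(x,H)$ is correct and in fact supplies the detail behind the paper's one-line remark that ``estimating the kernels of $(D^\beta\delta_jM)$ is similar,'' correctly locating where the hypothesis on $\partial_{x_j}m$ is consumed.
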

\begin{proof}
For each $\beta$ and $\gamma$ satisfying
$2\gamma_j-\beta_j=\alpha_j$, $\gamma_j\leq \alpha_j$, we want to
estimate the $L^2$ norm (in y-variable) of the kernel \be
\sum_{k=0}^\infty \Delta^{|\gamma|}m(x,
2k+n)\int_{\R^n}(B^*-A^*)^\beta \Phi_k(x,y')\chi_N(y',y)dy'.\ee
Expanding $(B^*-A^*)^\beta$ and recalling the definitions of
$\Phi_k$ and $\chi_N$ we need to consider
$$\sum_{|\mu|=k}\sum_{2^{N-1}\leq 2|\nu|+n < 2^N} A^{*r} \Phi_\mu (x) \Phi_\nu(y)\int_{\R^n}B^{*s} \Phi_\mu(y') \Phi_\nu(y')dy'$$
where $r,s$ are multiindices such that $r+s=\beta$. Since $ B^*_j
\Phi_\mu(y)=(2\mu_j+2)^{\frac{1}{2}}\Phi_{\mu+e_j}$,
$e_j=(0,\cdots, 1,\cdots, 0)$
$$\int_{\R^n}B^{*s}\Phi_{\mu}(y')\Phi_\mu(y')dy'=C(\mu, s)\int_{\R^n}\Phi_{\mu+s}(y')\Phi_{\nu}(y')dy'$$
where $|c(\mu,s)|\leq C |\mu|^{\frac{1}{2}|s|}$. Due to the
orthogonality of Hermite functions the above sum reduces to
$$\sum_{2^{N-1}\leq k+|s|< 2^N}|\Delta^{|\gamma|} m(2k+n)|^2\sum_{|\mu|=k}C(\mu, r)C(\mu,s) \Phi_{\mu+r}(x) \Phi_{\mu+s}(y).$$
The square of the $L^2$-norm of this sum is
$$\sum_{2^{N-1}\leq k+|s|<2^N} |\Delta^{|\gamma|}m(x, 2k+n)|^2 \sum_{|\mu|=k} C(\mu, s)^2 C(\mu, s)^2 \Phi_{\mu+r}(x)^2.$$
In view of the hypothesis on $m$, the above is bounded by
$$\sum_{2^{N-1}\leq k+|s|<2^N}(2k+n)^{-2|\gamma|+|r|+|s|} (k+|r|)^{\frac{n}{2}-1}\leq C~ 2^{N(\frac{n}{2}-|\alpha|)}$$
since $|r|+|s|=\beta$, $2|\gamma|-|\beta|=|\alpha|$. Estimating
the kernels of $(D^\beta \delta_j M)$ is similar. This completes
the proof of Proposition 5.3 and hence Theorem 1.3 in proved.
\end{proof}

In order to prove Theorem 1.2  we need the following analogue
of Theorem of 5.1.
\begin{thm}
Let $M$ be a bounded linear operator on $L^2(\R^n)$ such that
$D^\alpha M$ are all bounded on $L^2(\R^n)$ for $|\alpha|\leq
n+1$. Assume that
$$\sup_{x\in \R^n}\int_{\R^n}|(D^\alpha M)\chi_N (x,y)|^2 dy\leq C 2^{N(\frac{n}{2}-|\alpha|)}$$
for $|\alpha|\leq n+1$. Then $M$ is of weak type $(1,1)$
and strong type $(p,p)$ for $1<p<2$.
\end{thm}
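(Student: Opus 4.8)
The plan is to repeat, almost verbatim, the proof of Theorem 1.7 carried out in Section 4, with the non-commutative derivatives $\delta^\alpha\bar{\delta}^\beta$ replaced throughout by the commutator derivatives $D^\alpha$. As in the discussion following Theorem 5.1, the mechanism is that each $D_j=[x_j,M]$ is again a derivation, that $(x-y)^\alpha M_j(x,y)$ is the kernel of $D^\alpha M_j$, and that $D^\alpha(MS_j)$ is a finite linear combination of $(D^\beta M)(D^\gamma S_j)$ with $|\beta|+|\gamma|=|\alpha|$. Since $2D_k=\delta_k+\bar{\delta}_k$, every $D^\gamma S_j$ is a linear combination of the operators $\delta^\mu\bar{\delta}^\nu S_j$ with $|\mu|+|\nu|=|\gamma|$, so Proposition 2.1 — in particular the kernel estimate (2.2) — applies to $D^\gamma S_j$ without change. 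Thus the only hypothesis-dependent input is the $L^2$ kernel bound $\sup_x\int|((D^\beta M)\chi_N)(x,y)|^2\,dy\le C\,2^{N(\frac n2-|\beta|)}$, which we are given for all $|\beta|\le n+1$. The endgame will be to produce a uniform Hörmander integral condition on the kernels of $T_N=\sum_{j=0}^N M_j$, deduce uniform weak type $(1,1)$, pass to the limit, and interpolate with the $L^2$ boundedness of $M$ to reach $1<p<2$.

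First I would establish the $D$-analogue of Proposition 4.1, namely the pointwise bound $\sup_{x,y}|x-y|^l|M_j(x,y)|\le C\,t_{j+1}^{(l-n)/2}$ for all $l\le n+1$. Exactly as in Proposition 4.1 one writes $(D^\beta M)(D^\gamma S_j)=\sum_N (D^\beta M)\chi_N\cdot\chi_N(D^\gamma S_j)$, applies Cauchy--Schwarz in the intermediate variable, inserts the kernel hypothesis on $(D^\beta M)\chi_N$ together with (2.2) for $\chi_N(D^\gamma S_j)$, and sums the series $t_{j+1}\sum_N 2^{N(n-l+2)/2}f_{\gamma,\rho}(2^N t_{j+1})^{1/2}$, which converges because the $f_{\gamma,\rho}$ decay exponentially. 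Here it is essential that $M$ be controlled up to $|\beta|=n+1$; this is precisely why $n+1$ rather than $[\frac n2]+1$ derivatives are required. Interpolating between $l=n$ and $l=n+1$ then gives the $D$-analogue of Corollary 4.2, that is $\sup_{x,y}|x-y|^{n+\frac12}|M_j(x,y)|\le C\,t_{j+1}^{1/4}$.

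Next I would prove the $D$-analogue of the second estimate of Proposition 4.3, namely $\sup_{x,y}|x-y|^l|\nabla_y M_j(x,y)|\le C\,t_{j+1}^{(l-n-1)/2}$ for $l\le n+1$. As in that proof, $\nabla_y M_j$ is expressed through $M_jA_i$ and $M_jA_i^*$, in which the ladder operator acts on $S_j$ from the right; hence no extra derivative falls on $M$, and the estimate follows from the hypothesis on $D^\beta M$ together with the kernel bound (4.5) for $\chi_N(\delta^\gamma\bar{\delta}^\rho(S_jA_i))$, which carries the gain $2^{N/2}$. Because the derivative count on $M$ never exceeds $n+1$, the top order $l=n+1$ causes no difficulty, and — crucially — no separate hypothesis on $\delta_j M$ is needed. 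The key observation making this work is that for the weak type $(1,1)$ conclusion only smoothness of $M_j(x,y)$ in the \emph{second} variable is required, so the troublesome term $(\delta_iM)S_j$ appearing in the $\nabla_x$ estimate, which would demand control beyond class $C^{n+1}$ in the $D$-sense, never arises. This yields the $D$-analogue of Corollary 4.4 for $\nabla_y$.

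With these two pointwise bounds the weak type $(1,1)$ assertion follows as in Section 4. For $|x-z|>2|y-z|$ and $|x-y|>2|y-z|$, the pointwise kernel bound (with $|x-y|\sim|x-z|$) gives $|x-z|^{n+\frac12}|M_j(x,y)-M_j(x,z)|\le C\,t_{j+1}^{1/4}$, while the mean value theorem with the $\nabla_y$ bound gives $|x-z|^{n+\frac12}|M_j(x,y)-M_j(x,z)|\le C\,t_{j+1}^{-1/4}|y-z|$; together these produce $C|y-z|^{1/2}\min\{t_{j+1}^{1/4}|y-z|^{-1/2},\,t_{j+1}^{-1/4}|y-z|^{1/2}\}$. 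Summing the resulting convergent series in $j$ yields $|x-z|^{n+\frac12}|K_N(x,y)-K_N(x,z)|\le C|y-z|^{1/2}$ uniformly in $N$, which is the Hörmander integral condition (Theorem 5.10 of \cite{D}); hence each $T_N$ is of weak type $(1,1)$ with constant independent of $N$, and letting $N\to\infty$ gives weak type $(1,1)$ for $M$. Finally, Marcinkiewicz interpolation between this and the $L^2$ boundedness of $M$ gives the strong $(p,p)$ bound for $1<p<2$. The hard part is the first step: obtaining honest pointwise ($L^\infty$) kernel estimates up to the maximal order $l=n+1$, which is exactly what forces the class $C^{n+1}$ hypothesis; once these are in hand, the remaining geometry is the standard Calderón--Zygmund argument.
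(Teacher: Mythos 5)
Your proposal follows the paper's own route essentially verbatim: you reduce the theorem to the $D^\alpha$-analogues of Proposition 4.1 and of the $\nabla_y$ part of Proposition 4.3 (using the commutation $2D_k=\delta_k+\bar{\delta}_k$ to transfer Proposition 2.1 and estimate (4.5) to $D^\gamma S_j$ and $D^\gamma(S_jA_i)$), correctly isolate the key point that only second-variable smoothness is needed so the problematic $(\delta_i M)S_j$ term never appears, and then run the same H\"ormander-condition argument with the $\min\{t_{j+1}^{1/4}|y-z|^{-1/2},\,t_{j+1}^{-1/4}|y-z|^{1/2}\}$ summation to get uniform weak $(1,1)$ for $T_N$ and interpolation to $1<p<2$. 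This is exactly the paper's proof, with the details it leaves to the reader filled in consistently.
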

To prove the above result, it is enough to show that the estimates in Proposition
4.1 and in the second part of Proposition 4.3 will also be true under the
hypothesis of the above theorem. We have already discussed how one
can estimate $|x-y|^l M_j(x,y)$ in Theorem 5.1. From the proof of
Proposition 4.3 observe that for estimating $|x-y|^l|\nabla_y
M_j(x,y)|$ we basically need to estimate $D^\beta M$ and
$D^\gamma(S_j A_i)$, for $|\beta|+|\gamma|=l$ and $i=1, \cdots ,
n$. $D^\beta M$ can be estimated by the given hypothesis whereas
for $D^\gamma(S_j A_i)$ we need to use (4.5).

Once Theorem 5.4 is proved, the rest of the proof of Theorem 1.2
is similar to that of  Theorem 1.3.\\

\begin{center}
\bf{Acknowledgments}
\end{center}The first author is thankful to CSIR, India, for
the financial support. The work of the second author is supported
by J. C. Bose Fellowship from the Department of Science and
Technology (DST) and also by a grant from UGC via DSA-SAP. We would like to thank Rahul Garg for his careful reading of an earlier version of this article and making several useful suggestions.

\end{document}